\documentclass[12pt]{amsart}       
\usepackage{txfonts}
\usepackage{amssymb}
\usepackage{eucal}
\usepackage{bbm}
\usepackage{graphicx}
\usepackage{amsmath}
\usepackage{amscd}
\usepackage[all]{xy}           
\usepackage{amsfonts,latexsym}
\usepackage{xspace}
\usepackage{epsfig}
\usepackage{float}
\usepackage{color}
\usepackage{shuffle}
\usepackage{fancybox}
\usepackage{colordvi}
\usepackage{multicol}
\usepackage{colordvi}
\usepackage{mathrsfs}
\usepackage{ifpdf}
\ifpdf
  \usepackage[colorlinks,final,backref=page,hyperindex]{hyperref}
\else
  \usepackage[colorlinks,final,backref=page,hyperindex,hypertex]{hyperref}
\fi
\usepackage[active]{srcltx} 
\usepackage{tikz}
\usepackage{graphicx}
\usepackage{enumitem}

\newtheorem{theorem}{Theorem}[section]
\newtheorem{proposition}[theorem]{Proposition}
\newtheorem{lemma}[theorem]{Lemma}
\newtheorem{coro}[theorem]{Corollary}
\newtheorem{prop-def}{Proposition-Definition}[section]
\newtheorem{coro-def}{Corollary-Definition}[section]

\theoremstyle{definition}
\newtheorem{definition}[theorem]{Definition}
\newtheorem{remark}[theorem]{Remark}

\newcommand{\nc}{\newcommand}
\nc{\tred}[1]{\textcolor{red}{#1}}
\nc{\tblue}[1]{\textcolor{blue}{#1}}
\nc{\tgreen}[1]{\textcolor{green}{#1}}
\nc{\tpurple}[1]{\textcolor{purple}{#1}}
\nc{\btred}[1]{\textcolor{red}{\bf #1}}
\nc{\btblue}[1]{\textcolor{blue}{\bf #1}}
\nc{\btgreen}[1]{\textcolor{green}{\bf #1}}
\nc{\btpurple}[1]{\textcolor{purple}{\bf #1}}
\nc{\NN}{{\mathbb N}}
\nc{\ncsha}{{\mbox{\cyr X}^{\mathrm NC}}} \nc{\ncshao}{{\mbox{\cyr
X}^{\mathrm NC}_0}}

\newcommand{\delete}[1]{}

\nc{\mlabel}[1]{\label{#1}}
\nc{\mcite}[1]{\cite{#1}}
\nc{\mref}[1]{\ref{#1}}
\nc{\meqref}[1]{\eqref{#1}}
\nc{\mbibitem}[1]{\bibitem{#1}}

\delete{
\nc{\mlabel}[1]{\label{#1}{\hfill \hspace{1cm}{\bf{{\ }\hfill(#1)}}}}
\nc{\mcite}[1]{\cite{#1}{{\bf{{\ }(#1)}}}}
\nc{\mref}[1]{\ref{#1}{{\bf{{\ }(#1)}}}}
\nc{\meqref}[1]{\eqref{#1}{{\bf{{\ }(#1)}}}}
\nc{\mbibitem}[1]{\bibitem[\bf #1]{#1}}
}
\nc{\sha}{{\mbox{\cyr X}}}  
\newfont{\scyr}{wncyr10 scaled 550}
\nc{\ssha}{\mbox{\bf \scyr X}}
\nc{\shap}{{\mbox{\cyrs X}}} 
\nc{\shpr}{\diamond}    
\nc{\shp}{\ast} \nc{\shplus}{\shpr^+}
\nc{\shprc}{\shpr_c}    
\nc{\dep}{\mrm{dep}} \nc{\lc}{\lfloor} \nc{\rc}{\rfloor}
\nc{\db}{\leq_{\rm db}} \nc{\bfk}{{\bf k}}

\nc{\cala}{{\mathcal A}} \nc{\calb}{{\mathcal B}}
\nc{\calc}{{\mathcal C}}
\nc{\cald}{{\mathcal D}} \nc{\cale}{{\mathcal E}}
\nc{\calf}{{\mathcal F}} \nc{\calg}{{\mathcal G}}
\nc{\calh}{{\mathcal H}} \nc{\cali}{{\mathcal I}}
\nc{\call}{{\mathcal L}} \nc{\calm}{{\mathcal M}}
\nc{\caln}{{\mathcal N}} \nc{\calo}{{\mathcal O}}
\nc{\calp}{{\mathcal P}} \nc{\calr}{{\mathcal R}}
\nc{\cals}{{\mathcal S}} \nc{\calt}{{\mathcal T}}
\nc{\calu}{{\mathcal U}} \nc{\calw}{{\mathcal W}} \nc{\calk}{{\mathcal K}}
\nc{\calx}{{\mathcal X}} \nc{\CA}{\mathcal{A}}

\nc{\fraka}{{\mathfrak a}} \nc{\frakA}{{\mathfrak A}}
\nc{\frakb}{{\mathfrak b}} \nc{\frakB}{{\mathfrak B}}
\nc{\frakc}{{\mathfrak c}}
\nc{\frakD}{{\mathfrak D}} \nc{\frakF}{\mathfrak{F}}
\nc{\frakf}{{\mathfrak f}} \nc{\frakg}{{\mathfrak g}}
\nc{\frakH}{{\mathfrak H}} \nc{\frakL}{{\mathfrak L}}
\nc{\frakM}{{\mathfrak M}} \nc{\bfrakM}{\overline{\frakM}}
\nc{\frakm}{{\mathfrak m}} \nc{\frakP}{{\mathfrak P}}
\nc{\frakN}{{\mathfrak N}} \nc{\frakp}{{\mathfrak p}}
\nc{\frakS}{{\mathfrak S}} \nc{\frakT}{\mathfrak{T}}
\nc{\frakX}{{\mathfrak X}}

\font\cyr=wncyr10 \font\cyrs=wncyr7
\nc{\li}[1]{\textcolor{blue}{Nan:#1}}
\nc{\lir}[1]{\textcolor{red}{Li:#1}}
\nc{\yi}[1]{\textcolor{blue}{Yi: #1}}
\nc{\xing}[1]{\textcolor{purple}{Xing:#1}}
\nc{\revise}[1]{\textcolor{red}{#1}}
\nc{\nan}[1]{\textcolor{blue}{Nan:#1}}

\numberwithin{equation}{section}
\nc{\etree}{1}
\nc{\RP}{{\mathcal{D}}^{\alpha}(\Delta_T, V)}
\nc{\WRP}{{\mathcal{D}}_{w}^{\alpha}(\Delta_T, V)}
\nc{\Y}{{\bf Y}}
\nc{\x}{\mathbb{X}}
\nc{\xx}{\mathcal{X}}
\nc{\ha}{\mathcal{H}}
\nc{\HA}{{\bf H}}

\nc{\RR}{\mathbb{R}} \nc{\ZZ}{\mathbb{Z}} \nc{\V}{\RR^{d}} \nc{\pro}{\otimes}
\nc{\tng}{T^{\le N}(\RR^d)^{g}} \nc{\tn}{T^{\le N}(\RR^d)}
\nc{\ttg}{T^{\le 3}(\RR^d)^{g}}
\nc{\X}{{\bf X}} \nc{\Z}{{\bf Z}} \nc{\W}{{\bf W}}
\nc{\E}{{\bf E}} \nc{\J}{{\bf J}}
\nc{\DA}{\mathcal{D}^{\alpha}([0,2\pi], \RR^d)}
\nc{\C}{\mathcal{C}^{\alpha}}
\nc{\D}{\mathcal{D}^{\alpha}(\Delta_T, V)}
\nc{\CC}{\mathcal{C}_{\X}^{\alpha}} \nc{\CCA}{\mathcal{C}_{\tilde \X}^{\alpha}}
\nc{\WCC}{\mathcal{C}_{w, \X}^{\alpha}}

\nc{\f}{\varphi}
\nc{\al}{\alpha}
\nc{\lbar}{\overline}
\nc{\Hh}{{\mathfrak{H}}}
\nc{\id}{\text{id}} \nc{\Id}{\text{Id}}
\nc{\bx}{(1, X, \x, \xx)}
\nc{\bh}{(\delta H_t, \HA_t, \ha_t)}

\begin{document}
\title[Universal limit theorem]{Universal limit theorem for rough differential equations driven by controlled rough paths}
\author{Nannan Li}
\address{School of Mathematics and Statistics, Lanzhou University
Lanzhou, 730000, China
}
\email{linn2024@lzu.edu.cn}

\author{Xing Gao$^{*}$}\thanks{*Corresponding author}
\address{School of Mathematics and Statistics, Lanzhou University
Lanzhou, 730000, China;
Gansu Provincial Research Center for Basic Disciplines of Mathematics
and Statistics, Lanzhou, 730070, China
}
\email{gaoxing@lzu.edu.cn}

\begin{abstract}
We study rough differential equations driven by controlled rough paths in the level-$2$ regime $1/3<\alpha\le 1/2$. Given a reference rough path $\mathbf X=(1,X,\mathbb X)$ and an $\mathbf X$-controlled driver $\mathbf Z=(Z,Z')$, we first give a point-removal construction of the controlled rough integral
$
 \int_s^t Y_r\,d\mathbf Z_r
$
and prove the corresponding remainder estimates. We then establish local and global well-posedness for the controlled-driven rough differential equation
$
 dY_t=F(Y_t)\,d\mathbf Z_t.
$
A key structural result is the canonical lift of the controlled driver: from the controlled data $(\mathbf X,\mathbf Z)$ we construct a level-$2$ rough path
\[
 \widehat{\mathbf Z}=(1,Z,\mathbb Z),
 \qquad
 \mathbb Z_{s,t}:=\int_s^t Z_{s,u}\otimes dZ_u,
\]
and show that the controlled-driven equation is equivalent to the classical rough differential equation driven by $\widehat{\mathbf Z}$. This equivalence shows compatibility with classical rough path theory, while the controlled formulation keeps track of the dependence of the effective driver $Z$ on the reference rough path $\X$. Finally, we prove a universal limit theorem for the solution map
$
 (\mathbf X,\mathbf Z,Y_0)\longmapsto Y,
$
which gives stability with respect to perturbations of the initial condition, the reference rough path, and the controlled driver. These results provide a natural framework for layered rough systems and equations driven by transformed or previously evolved rough signals.
\end{abstract}

\makeatletter
\@namedef{subjclassname@2020}{\textup{2020} Mathematics Subject Classification}
\makeatother
\subjclass[2020]{
60L20, 
60L50, 
60H15, 
60H17, 
}

\keywords{rough paths, controlled rough path, rough integral, universal limit theorem}

\maketitle

\tableofcontents

\setcounter{section}{0}

\allowdisplaybreaks

\section{Introduction}
Rough path theory, initiated by Lyons~\cite{Ly98}, provides a deterministic framework for differential equations driven by irregular signals. In the classical setting one studies
\begin{equation}\label{eq:intro_rde_classical}
 dY_t=F(Y_t)\,dX_t,
\end{equation}
where the path $X$ is enhanced to a rough path $\mathbf X$ carrying iterated-integral information. This enhancement makes the solution map pathwise well posed and continuous in the rough path topology, a statement known as Lyons' universal limit theorem~\cite{FH20,FV10,Ly98,LQ02}. It also explains the stability of Wong--Zakai type approximations~\cite{WZ1,WZ2}.

A second fundamental viewpoint is Gubinelli's theory of controlled rough paths~\cite{Gu04}. Instead of considering only the primitive rough driver, one studies paths whose increments are locally described by the increments of a fixed reference rough path. This is especially important in multi-layer rough systems: the signal which drives an upper-level equation is often not the original noise itself, but a transformed, filtered, or previously evolved output. Such an effective signal is naturally controlled by the underlying rough path. The purpose of this paper is to develop the corresponding integration, equation, and stability theory for rough differential equations driven by such controlled drivers.

A central point of the paper is that the passage from a controlled driver to a genuine rough path driver is itself a nontrivial part of the theory. In the classical formulation, one starts from a rough path lift of the effective driver. In the controlled formulation, however, the available data are usually the reference rough path $\mathbf X$ and an $\mathbf X$-controlled path $\mathbf Z=(Z,Z')$. One therefore has to construct the missing second level of $Z$, prove that it defines a genuine rough path, and understand how the resulting equation depends on the pair $(\mathbf X,\mathbf Z)$. This is precisely the role of the canonical lift constructed later in Theorem~\ref{thm:canonical-lift}.

\subsection{Rough integration against controlled drivers}\label{subsec:intro-controlled-integration}
Let $\alpha\in(1/3,1/2]$ and let $\mathbf X=(1,X,\mathbb X)$ be a level-$2$ rough path. Given two $\mathbf X$-controlled paths $\mathbf Y=(Y,Y')$ and $\mathbf Z=(Z,Z')$, Gubinelli~\cite{Gu04} introduced the rough integral of $\mathbf Y$ against $\mathbf Z$ through compensated Riemann sums,
\begin{equation}\label{eq:intro_integral_13}
 \int_s^t Y_r\,d\mathbf Z_r
 :=\lim_{|P|\to0}\sum_{[u,v]\in P}
 Y_u Z_{u,v}+Y'_uZ'_u\mathbb X_{u,v},
\end{equation}
whenever the limit exists. The classical rough integral $\int Y\,d\mathbf X$ is recovered from~\eqref{eq:intro_integral_13} by choosing $\mathbf Z=(X,\mathrm{id})$. Thus~\eqref{eq:intro_integral_13} is not a separate object from classical rough integration; rather, it is its natural controlled-driver extension.

The relevance of~\eqref{eq:intro_integral_13} comes from applications where the effective driver is itself generated by another rough system. For instance, if a lower-level equation driven by $\mathbf X$ produces an output $Z$, then $Z$ is typically controlled by $X$, and an upper-level equation should be written directly in terms of $d\mathbf Z$. This keeps the layered structure visible and avoids unnecessarily enlarging the state space. Related analytical ideas already appear in Young integration~\cite{You36}, the sewing lemma~\cite{BZ22,FdLP06,Gu04,Le20}, Hopf-algebraic controlled rough paths~\cite{ZGLM25}, and fractional-calculus approaches to lower H\"older regimes~\cite{It22}. They also resonate with the higher-order expansion viewpoint underlying regularity structures~\cite{BHZ19,Hair14}.

In this paper we revisit~\eqref{eq:intro_integral_13} in the level-$2$ regime. Our first goal is to give a direct point-removal construction of the integral and to obtain an estimate adapted to controlled-path norms. This estimate is important because it is not only an existence statement for the integral; it is the quantitative input used later to prove closure, well-posedness, and stability for controlled-driven rough differential equations.

\subsection{Controlled-driven rough differential equations}
\label{subsec:intro_controlled_rde}  
The classical rough differential equation~\eqref{eq:intro_rde_classical} is usually written as
\[
 Y_t=Y_0+\int_0^t F(Y_r)\,d\mathbf X_r,
\]
and solved in a space of $\mathbf X$-controlled paths. Its robustness with respect to the driving rough path is one of the central achievements of rough path theory~\cite{BG22,FH20,FV10,FZ18,Gu04,GLM24,Ly98}.

The controlled-driver analogue is
\begin{equation}\label{eq:intro_rde_controlled}
 dY_t=F(Y_t)\,d\mathbf Z_t,
\end{equation}
where $\mathbf Z=(Z,Z')$ is controlled by the same reference rough path $\mathbf X$. We call~\eqref{eq:intro_rde_controlled} a controlled-driven rough differential equation, or equivalently a rough differential equation driven by a controlled rough path. This formulation is motivated by cascaded rough systems, stochastic filtering mechanisms, transformed rough signals, and equations in which an intermediate rough integral becomes the next driver. The rough Burgers-type equations studied by Hairer and Weber~\cite{HW13}, where integrals of the form $\int F(Y)\,d\mathbf Y$ appear, provide a related example of this philosophy.

The main analytical difficulty is that the driver $Z$ does not come with an a priori rough path lift in its own state space. Instead, its second-order information has to be reconstructed from the controlled expansion of $Z$ relative to the reference rough path $X$. This reconstruction is not merely a technical step. It is the mechanism that turns the controlled data $(\mathbf X,\mathbf Z)$ into a genuine rough path driver
$ \widehat{\mathbf Z}=(1,Z,\mathbb Z),$
and it explains how the controlled-driven equation is connected with the classical rough path framework.

Theorem~\ref{thm:canonical-lift} shows that, after this canonical lift has been constructed, the controlled-driven equation is equivalent to a classical rough differential equation driven by $\widehat{\mathbf Z}$. This equivalence is important, but it should not be interpreted as making the controlled formulation unnecessary. The induced rough path $\widehat{\mathbf Z}$ records the effective driver seen by the equation, whereas the pair $(\mathbf X,\mathbf Z)$ records how this effective driver depends on the underlying reference rough path. In layered rough systems, this dependence is essential.

At the level of stability, this distinction becomes decisive. The classical universal limit theorem controls the solution with respect to perturbations of the induced rough path $\widehat{\mathbf Z}$. In contrast, the controlled formulation allows us to study the finer solution map
$
 (\mathbf X,\mathbf Z)\longmapsto Y.
$
This is the appropriate object when $Z$ is produced from $X$, for example by a lower-level rough differential equation, by a rough integral, or by a nonlinear transformation. The universal limit theorem proved in Section~\ref{ss:sec4} is therefore a stability theorem for the full controlled structure, not only for the induced rough path.

\subsection{Main contributions and organization}
\label{subsec:intro-main-results}
The paper is written in the level-$2$ regime $\alpha\in(1/3,1/2]$. Its main contributions are as follows.
\begin{enumerate}
\item \textbf{Controlled rough integral and a priori estimate.} We give a point-removal construction of the integral~\eqref{eq:intro_integral_13}. The existence of the integral is stated in Theorem~\ref{thm:1}, and the estimate in Corollary~\ref{coro:esti} shows that the remainder after the compensated first two terms is of order $|t-s|^{3\alpha}$ and is controlled by the controlled-path norms of $\mathbf Y$ and $\mathbf Z$. This estimate provides the main quantitative input for the later well-posedness and stability arguments.

\item \textbf{Closure of the controlled category.} We prove that the integral path
\[
 t\longmapsto \int_0^t Y_r\,d\mathbf Z_r
\]
is again controlled by $\mathbf X$; see Proposition~\ref{prop:1}. Together with the composition estimates in Lemma~\ref{lem:2}, Corollary~\ref{coro:3}, and Lemma~\ref{lem:4}, this shows that the right-hand side of~\eqref{eq:intro_rde_controlled} defines a self-map on a controlled-path space.

\item \textbf{Existence and uniqueness for controlled-driven RDEs.} Using the local Lipschitz estimate in Proposition~\ref{prop:3}, we solve~\eqref{eq:intro_rde_controlled} by a fixed point argument. The local result is Theorem~\ref{thm:2}, and the global result is Theorem~\ref{thm:3}. Thus equations driven by controlled rough paths are well posed without first postulating a rough path lift above $Z$.

\item \textbf{Canonical lift of the controlled driver.}
Starting from an $\mathbf X$-controlled driver $\mathbf Z=(Z,Z')$, we construct a second-level increment
\[
 \mathbb Z_{s,t}=\int_s^t Z_{s,u}\otimes dZ_u
\]
in the compensated controlled sense. Theorem~\ref{thm:canonical-lift} proves that
$
 \widehat{\mathbf Z}:=(1,Z,\mathbb Z)
$
is a level-$2$ rough path and satisfies the estimate
\[
 \|\widehat{\mathbf Z}\|_\alpha
 \le
 M\bigl(T,\|\mathbf X\|_\alpha,|Z'_0|,\|\mathbf Z\|_{\mathbf X;\alpha}\bigr).
\]
Moreover, the controlled-driven equation coincides with the classical rough differential equation driven by $\widehat{\mathbf Z}$. This result is the structural bridge of the paper. It shows that the controlled-driven theory is compatible with the classical rough path framework, while the controlled formulation retains the dependence of the effective driver $Z$ on the reference rough path $\mathbf X$.

\item \textbf{Universal limit theorem in the controlled topology.} The local stability estimate in Proposition~\ref{prop:4}, together with the patching argument in Lemma~\ref{lem:patching}, leads to the universal limit theorem stated in Theorem~\ref{thm:4}. The theorem controls the distance between two solutions in terms of the initial data, the reference rough paths $\mathbf X$ and $\widetilde{\mathbf X}$, and the controlled drivers $\mathbf Z$ and $\widetilde{\mathbf Z}$. This extends the classical robustness principle to the setting where the driver itself is controlled by another rough path.
\end{enumerate}

These results show that rough differential equations driven by controlled rough paths are not redundant even though they can be related to classical rough differential equations through the canonical lift. The point is that the classical rough path $\widehat{\mathbf Z}$ describes the effective driver, while the controlled pair $(\mathbf X,\mathbf Z)$ describes how this effective driver is generated and perturbed. Thus the controlled formulation keeps the layered structure visible. Theorem~\ref{thm:canonical-lift} guarantees compatibility with the classical rough path framework, whereas Theorem~\ref{thm:4} gives a stability principle for the finer map
$
 (\mathbf X,\mathbf Z)\longmapsto Y.
$
This is the main reason why controlled-driven rough differential equations are worth studying as a separate, structured extension of the classical theory.

\smallskip 
\noindent{\bf Outline of the paper.} The rest of the paper is organized as follows. Section~\ref{ss:sec2} develops the controlled rough integral. In particular, Theorem~\ref{thm:1} gives the existence of the integral by a point-removal construction, Corollary~\ref{coro:esti} provides the key remainder estimate, and Proposition~\ref{prop:1} shows that the integral path is again controlled by the reference rough path. Section~\ref{ss:sec3} establishes the controlled-driven rough differential equation. The local and global well-posedness results are proved in Theorems~\ref{thm:2} and~\ref{thm:3}, respectively, while Theorem~\ref{thm:canonical-lift} constructs the canonical rough path lift of the controlled driver and identifies the controlled-driven equation with a classical rough differential equation driven by this lift. Section~\ref{ss:sec4} proves the universal limit theorem. More precisely, Proposition~\ref{prop:4} gives the local stability estimate, Lemma~\ref{lem:patching} globalizes the estimate, and Theorem~\ref{thm:4} establishes the continuity of the solution map with respect to the initial data, the reference rough path, and the controlled driver.

\smallskip

\noindent\textbf{Notation.}
Throughout this paper, we work over the field $\mathbb R$, which serves as the base field for all vector spaces, tensor products, algebras, coalgebras, and linear maps under consideration. Unless otherwise stated, all tensor products of Banach spaces are equipped with a fixed admissible cross norm and then completed. In particular, for Banach spaces $U$ and $V$, the notation $U\otimes V$ denotes the completed tensor product with respect to this norm. For definiteness, one may take the
projective tensor norm throughout. The notation $M(\bullet)$ denotes a universal function, increasing in all of its arguments, whose value may change from line to line.

Let $U$, $V$ and $W$ be Banach spaces. We use $V$ as the underlying space of the driving rough path (Definition~\mref{def:wrp}) and $W$ as the value space of controlled paths (Definition~\mref{def:crp}). The space $U$ is the codomain of the rough integral in Theorem~\mref{thm:1}. For a continuous path
\[
X:[0,T]\to V,\qquad t\mapsto X_t,
\]
we denote its increment over an interval $[s,t]$ by $X_{s,t}:=X_t-X_s$.

\section{Rough integrals}\label{ss:sec2}
In this section, we first review the notions of rough paths and controlled rough paths. We then present the main results on rough integrals, where the rough integral is understood as the integral of one controlled rough path against another.

\begin{definition}\cite{BG22, FH20}
Let $V$ be a Banach space, $\alpha\in(0,1]$ and $N:=\lfloor 1/\alpha\rfloor$. Set $\Delta_T:=\{(s,t) \mid 0\leq s\leq t \leq T\}$. An {\bf $\alpha$-H\"older rough path} is a map
\[
\mathbf X=(X^0, X^1, \ldots, X^N): \Delta_T\to \bigoplus_{k=0}^N V^{\otimes k}
\]
such that:
\begin{enumerate}
\item[\rm (1)] {\bf (Chen's relation)} For all $0\le s\le u\le t\le T$,
\[
\mathbf X_{s,u}\otimes \mathbf X_{u,t}=\mathbf X_{s,t}.
\]

\item[\rm (2)] {\bf (Analytic estimate)} For each $i=0,\ldots,N$,
\[
\|X^i\|_{i\alpha}:= \sup_{s\neq t\in[0,T]}\frac{|X^i_{s,t}|}{|t-s|^{i\alpha}}<\infty.
\]
\end{enumerate}
\mlabel{def:wrp}
\end{definition}

Note that the zero-th component $X^{0}:\Delta_T\to \mathbb{R}$  must be the constant map equal to $1$. Accordingly, throughout this paper we write $X^{0}=1$.
Let $\RP$ denote the set of $\alpha$-H\"older rough paths. For each $\mathbf X\in \RP$, define
\begin{equation}
\|\mathbf X\|_{\alpha}:=\sum_{i=1}^{N}\|X^i\|_{i\alpha}.
\mlabel{eq:note4}
\end{equation}
For two rough paths 
\[
\mathbf X=(X^0, X^1, \ldots, X^N), 
\qquad 
\widetilde{\mathbf X}=(\widetilde X^0, \widetilde X^1,\ldots, \widetilde X^N),
\]
we use the distance
\[
\|\mathbf X-\widetilde{\mathbf X}\|_\alpha
:=
\sum_{i=1}^{N}\|X^i-\widetilde X^i\|_{i\alpha}.
\]

We now recall the definition of a controlled rough path. \vspace{0.1in}

\begin{definition}~\cite{BG22, FH20}
Let $V$ and $W$ be Banach spaces, and let $Y:[0,T]\to W$ be a path. Fix $\alpha\in(0,1]$, and set $N:=\lfloor \frac{1}{\alpha}\rfloor$. Let $\mathbf X=(1,X^1,\ldots,X^N)\in\D$. A collection
\[
\mathbf Y=(Y^0,\ldots,Y^{N-1}):[0,T]\to
W\times \mathcal L(V,W)\times\cdots\times \mathcal L\bigl(V^{\otimes (N-1)},W\bigr)
\]
is called an {\bf $\mathbf X$-controlled rough path} if and only if
\begin{equation}
\|R^{i,\mathbf Y}\|_{(N-i)\alpha} :=\sup_{s\neq t\in[0,T]}\frac{|R^{i,\mathbf Y}_{s,t}|}{|t-s|^{(N-i)\alpha}} <\infty,\qquad i=0,\ldots,N-1,
\mlabel{eq:remai}
\end{equation}
where
\begin{equation}
R^{i,\mathbf Y}_{s,t}:=
\begin{cases}
Y_t^i-Y_s^i- \sum\limits_{j=1}^{N-1-i}Y_s^{i+j}X^j_{s,t}, & i=0,\ldots,N-2,\\[1ex]
Y_t^{N-1}-Y_s^{N-1}, & i=N-1.
\end{cases}
\mlabel{eq:-1}
\end{equation}
\mlabel{def:crp}
\end{definition}

For $\X\in \RP$, denote by $\CC([0,T], W)$ the space of $\X$-controlled rough paths.
Define a norm $\||\cdot\||_{\X;\al}$ on $\CC([0,T], W)$ as
\begin{equation*}
\||\Y\||_{\X; \al}:=\sum_{i=0}^{N-1}|Y_0^i|+\sum_{i=0}^{N-1}\|R^{i, \Y}\|_{(N-i)\al}.
\end{equation*}
Note that the normed space $\Big(\CC([0,T], W),  \||\cdot\||_{\X;\al}\Big)$ is a Banach space~\cite{BG22}. For the purpose of calculation, we also need a seminorm $\|\cdot\|_{\X;\al}$ on $\CC([0,T], W)$~\cite{BG22}, which is given by
\begin{equation}
\|\Y\|_{\X; \al}:=\sum_{i=0}^{N-1}\|R^{i, \Y}\|_{(N-i)\al}.
\mlabel{eq:norm2}
\end{equation}

For continuity estimates, it is also important to quantify the distance between controlled rough paths defined with respect to different driving rough paths. Let $\mathbf X$ and $\tilde{\mathbf X}$ be $\alpha$-H\"older rough paths, and let $\Y\in \CC([0,T], W)$ and $\tilde \Y\in \mathcal{C}_{\tilde \X}^{\alpha}([0,T], W)$. Define the distance~\cite{BG22}
\begin{equation}
d_{\X, \tilde \X; \al}(\Y, \tilde \Y):=\sum_{i=0}^{N-1}\|R^{i, \Y}- R^{i, \tilde \Y}\|_{(N-i)\al}.
\mlabel{eq:norm1}
\end{equation}

The level-$2$ rough integral of a controlled rough path against another controlled rough path was originally introduced in~\cite{Gu04}. 
Here we give a point-removal proof, which also provides the explicit estimate needed in the proof of our main result.
\smallskip

\begin{theorem}~\cite[Theorem 1]{Gu04}
Let $\al \in(\frac{1}{3},\frac{1}{2}]$ and $\X =(1, X, \x)\in \D$. Let $\Y = (Y, Y')\in \CC([0, T], \mathcal{L}(W, U))$ and $\Z = (Z, Z')\in \CC([0, T], W)$. For each pair of $s<t$, the rough integral
\begin{equation*}
\int_s^t Y_r\,d\Z_r:=\lim_{|\pi | \to 0} \sum_{[u, v]\in \pi}Y_u  Z_{u,v}+Y'_uZ'_{u} \x_{u,v}\in U
\end{equation*}
is well-defined, where $\pi$ is an arbitrary partition of $[s, t]$.
\mlabel{thm:1}
\end{theorem}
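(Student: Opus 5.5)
We prove this with the sewing-type point-removal argument. Define the germ
\[
\Xi_{u,v}:=Y_uZ_{u,v}+Y'_uZ'_u\mathbb X_{u,v},
\]
and for a partition $\pi$ of $[s,t]$ set $S(\pi):=\sum_{[u,v]\in\pi}\Xi_{u,v}$. The first step is to compute the defect
\[
\delta\Xi_{u,m,v}:=\Xi_{u,v}-\Xi_{u,m}-\Xi_{m,v}
\]
for $u<m<v$. Chen's relation gives $\mathbb X_{u,v}=\mathbb X_{u,m}+\mathbb X_{m,v}+X_{u,m}\otimes X_{m,v}$. Expanding $Y_{u,m}=Y'_uX_{u,m}+R^{Y}_{u,m}$, $Z_{m,v}=Z'_mX_{m,v}+R^Z_{m,v}$, $Z'_{u,m}=R^{Z'}_{u,m}$ and $Y'_{u,m}=R^{Y'}_{u,m}$, we aim for the identity
\[
\delta\Xi_{u,m,v}=-R^Y_{u,m}Z_{m,v}-Y'_uX_{u,m}R^Z_{m,v}+Y'_uX_{u,m}\,Z'_{u,m}X_{m,v}-(Y'_mZ'_m-Y'_uZ'_u)\mathbb X_{m,v}.
\]
Here $Y'_uX_{u,m}Z'_m X_{m,v}$ is matched against $Y'_uZ'_u(X_{u,m}\otimes X_{m,v})$, and the mismatch is the $Z'_{u,m}$ term. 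The index conventions, in particular how $Y'_uZ'_u$ acts on $V\otimes V$, need to be fixed carefully.

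Each term is $O(|v-u|^{3\alpha})$. The bounds are
\[
|R^Y|\le\|R^Y\|_{2\alpha}|\cdot|^{2\alpha},\qquad |Z_{m,v}|\le(\sup|Z'|\,\|X\|_\alpha+\|R^Z\|_{2\alpha}T^{\alpha})|\cdot|^\alpha,
\]
and similarly for the others, using that $Y'$ and $Z'$ are $\alpha$-H\"older. Sup bounds such as $\sup|Z'|\le|Z'_0|+\|R^{Z'}\|_\alpha T^\alpha$ come from the controlled-path norms. This yields
\[
|\delta\Xi_{u,m,v}|\le C|v-u|^{3\alpha},
\]
where $C=M(T,\|\mathbf X\|_\alpha,\||\mathbf Y\||_{\mathbf X;\alpha},\||\mathbf Z\||_{\mathbf X;\alpha})$.

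Next comes the point-removal step (Young's argument). For a partition $\pi$ with $n\ge2$ intervals, there is an interior point $m$, with neighbours $u<m<v$ in $\pi$, such that $|v-u|\le 2|t-s|/(n-1)$. Removing $m$ changes $S(\pi)$ by $\delta\Xi_{u,m,v}$. Iterating down to the trivial partition gives the maximal inequality
\[
|S(\pi)-\Xi_{s,t}|\le C2^{3\alpha}\zeta(3\alpha)|t-s|^{3\alpha},
\]
which converges because $3\alpha>1$. This uniform bound also holds on every subinterval.

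For convergence as $|\pi|\to0$, take two partitions $\pi,\pi'$ and compare each with their common refinement $\pi\cup\pi'$. On each interval $[u,v]\in\pi$, the refinement restricted to $[u,v]$ differs from $\Xi_{u,v}$ by at most $C'|v-u|^{3\alpha}$. Summing,
\[
|S(\pi)-S(\pi\cup\pi')|\le C'\sum_{[u,v]\in\pi}|v-u|^{3\alpha}\le C'|\pi|^{3\alpha-1}|t-s|\to0.
\]
Hence $S(\pi)$ is Cauchy and the limit exists. By the same bound it is independent of the partition sequence. Passing to the limit in the maximal inequality gives the remainder estimate of Corollary~\ref{coro:esti}.

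The main obstacle is the algebraic defect computation. One must check that the second-order compensation $Y'_uZ'_u\mathbb X_{u,v}$ exactly cancels the cross term $Y'_uX_{u,m}Z'_mX_{m,v}$ up to genuinely $3\alpha$-order remainders, and keep the tensor and operator compositions consistent: $Y'_u\in\mathcal L(V,\mathcal L(W,U))$, $Z'_u\in\mathcal L(V,W)$, so $Y'_uZ'_u$ acts on $V\otimes V$. I would first do this computation for $\mathbf Z=(X,\mathrm{id})$ to calibrate signs, then insert the extra $R^Z$ and $Z'_{u,m}$ terms.
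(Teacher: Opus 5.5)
Your proposal is correct and follows the paper's proof essentially step for step. The paper's identity for $\int_P-\int_{P\setminus\{t_j\}}$ is exactly $-\delta\Xi_{t_{j-1},t_j,t_{j+1}}$, expanded via Chen's relation and the controlled expansions into the terms you list; it then uses the same choice of a point with $t_{j+1}-t_{j-1}\le 2(t-s)/(n-1)$, the same $\sum_n n^{-3\alpha}$ maximal inequality, and the same common-refinement Cauchy argument with the bound $|P|^{3\alpha-1}|t-s|$. The only slip is a sign: the mismatch term should be $-Y'_uX_{u,m}\,Z'_{u,m}X_{m,v}$ rather than $+$, which is harmless since only $|\delta\Xi_{u,m,v}|$ enters the argument.
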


\begin{remark}
Here we explain the notation $Y'_u Z'_u\,\x_{u,v}$. Suppose $\x_{u,v}=x_1\otimes x_2$, then this notation $Y'_u Z'_u\,\x_{u,v}$ can be interpreted as
\[Y'_u Z'_u\,\x_{u,v}:=[Y'_u(x_1)]\big(Z'_u(x_2) \big)\in U,\]
where
\[Y'_u(x_1):W \to U,\ \text{\,and\,}\ Z'_u(x_2)\in W.\]
\end{remark}

\begin{proof}
Let
$$P : s=t_0<t_1<\cdots<t_{n-1}<t_n=t$$
be a finite partition of $[s, t]$. For $j = 1, \ldots , n-1$, $P\setminus \{t_j\}$ denotes the partition obtained by removing the interior
point $t_j$.
Define the enhanced Riemann-sum approximation by
\[\int_PY_r\,d\Z_r:=\sum_{i=0}^{n-1}Y_{t_i} Z_{t_i, t_{i+1}}+Y'_{t_i}Z'_{t_i}\x_{t_i, t_{i+1}}.\]
Then
\begin{align}
&\ \int_PY_r\,d\Z_r-\int_{P\setminus \{t_j\}}Y_r\,d\Z_r\nonumber\\
=&\ (Y_{t_{j-1}}Z_{t_{j-1}, t_{j}}+Y'_{t_{j-1}}Z'_{t_{j-1}} \x_{t_{j-1}, t_{j}})+(Y_{t_{j}}Z_{t_{j}, t_{j+1}}+Y'_{t_{j}}Z'_{t_{j}} \x_{t_{j}, t_{j+1}}) -(Y_{t_{j-1}}Z_{t_{j-1}, t_{j+1}}+Y'_{t_{j-1}}Z'_{t_{j-1}} \x_{t_{j-1}, t_{j+1}})\nonumber\\
=&\ Y_{t_{j-1}}(Z_{t_{j-1}, t_{j}}-Z_{t_{j-1}, t_{j+1}})
+Y'_{t_{j-1}}Z'_{t_{j-1}} (\x_{t_{j-1}, t_{j}}-\x_{t_{j-1}, t_{j+1}})+(Y_{t_{j}}Z_{t_{j}, t_{j+1}}+Y'_{t_{j}}Z'_{t_{j}} \x_{t_{j}, t_{j+1}})\nonumber\\
=&\ -Y_{t_{j-1}}Z_{t_{j}, t_{j+1}}
-Y'_{t_{j-1}}Z'_{t_{j-1}}(\x_{t_{j}, t_{j+1}}+X_{t_{j-1}, t_{j}}\otimes X_{t_{j}, t_{j+1}}) +(Y_{t_{j}}Z_{t_{j}, t_{j+1}}+Y'_{t_{j}}Z'_{t_{j}} \x_{t_{j}, t_{j+1}}) \nonumber\\
=&\ Y_{t_{j-1}, t_{j}}Z_{t_{j}, t_{j+1}}+(Y'_{t_{j}}Z'_{t_{j}}-Y'_{t_{j-1}}Z'_{t_{j-1}})\x_{t_{j}, t_{j+1}}-Y'_{t_{j-1}}Z'_{t_{j-1}}(X_{t_{j-1}, t_{j}}\otimes X_{t_{j}, t_{j+1}}).\mlabel{eq:20}
\end{align}
By~(\ref{eq:-1}),
\[Y_{s,t}= R^{0, \Y}_{s,t}+ Y'_sX_{s,t},\quad Z_{s,t}= R^{0, \Z}_{s,t}+ Z'_sX_{s,t}.\]
Substituting the above equation into~(\ref{eq:20}) yields
\begin{align}
&\ \int_PY_r\,d\Z_r-\int_{P\setminus \{t_j\}}Y_r\,d\Z_r\nonumber\\
=&\ R^{0, \Y}_{t_{j-1}, t_{j}}R^{0, \Z}_{t_{j}, t_{j+1}}+R^{0, \Y}_{t_{j-1}, t_{j}}Z'_{t_{j}}X_{t_{j}, t_{j+1}}+Y'_{t_{j-1}}X_{t_{j-1}, t_{j}}R^{0, \Z}_{t_{j}, t_{j+1}}+Y'_{t_{j-1}}Z'_{t_{j}}X_{t_{j-1}, t_{j}}\otimes X_{t_{j}, t_{j+1}}\nonumber\\
&\ +(Y'_{t_{j}}Z'_{t_{j}}-Y'_{t_{j-1}}Z'_{t_{j-1}})\x_{t_{j}, t_{j+1}}-Y'_{t_{j-1}}Z'_{t_{j-1}}(X_{t_{j-1}, t_{j}}\otimes X_{t_{j}, t_{j+1}})\nonumber\\
=&\ R^{0, \Y}_{t_{j-1}, t_{j}}R^{0, \Z}_{t_{j}, t_{j+1}}+R^{0, \Y}_{t_{j-1}, t_{j}}Z'_{t_{j}}X_{t_{j}, t_{j+1}}+Y'_{t_{j-1}}X_{t_{j-1}, t_{j}}R^{0, \Z}_{t_{j}, t_{j+1}}+Y'_{t_{j-1}}Z'_{t_{j-1}, t_{j}}X_{t_{j-1}, t_{j}}\otimes X_{t_{j}, t_{j+1}}\nonumber\\
&\ +\Big((Y'_{t_{j}}-Y'_{t_{j-1}})Z'_{t_{j}}+Y'_{t_{j-1}}(Z'_{t_{j}}-Z'_{t_{j-1}})\Big)\x_{t_{j}, t_{j+1}}\mlabel{eq:20+1} .
\end{align}
Hence
\begin{align}
&\ \left|\int_PY_r\,d\Z_r-\int_{P\setminus \{t_j\}}Y_r\,d\Z_r\right|\nonumber\\
\le&\ \|R^{0, \Y}\|_{2\al}\|R^{0, \Z}\|_{2\al}|t_{j}-t_{j-1}|^{2\al}|t_{j+1}-t_{j}|^{2\al}+\|R^{0, \Y}\|_{2\al}\|Z'\|_{\infty}\|X\|_{\al}|t_{j}-t_{j-1}|^{2\al}|t_{j+1}-t_{j}|^{\al}\nonumber\\
&\ +\|R^{0, \Z}\|_{2\al}\|Y'\|_{\infty}
\|X\|_{\al}|t_{j}-t_{j-1}|^{\al}|t_{j+1}-t_{j}|^{2\al}+\|Y'\|_{\infty}\|Z'\|_{\al}\|X\|_{\al}^2|t_{j}-t_{j-1}|^{2\al}|t_{j+1}-t_{j}|^{\al}\nonumber\\
&\ +\|Y'\|_{\al}\|Z'\|_{\infty}\|\x\|_{2\al}|t_{j}-t_{j-1}|^{\al}|t_{j+1}-t_{j}|^{2\al}+\|Y'\|_{\infty}\|Z'\|_{\al}\|\x\|_{2\al}|t_{j}
-t_{j-1}|^{\al}|t_{j+1}-t_{j}|^{2\al}.\mlabel{eq:5}
\end{align}

We aim to estimate~\eqref{eq:5} in terms of $\|\mathbf Y\|_{\mathbf X;\alpha}$ and $\|\mathbf Z\|_{\mathbf X;\alpha}$, which do not include the quantities
\[
\|Y'\|_{\infty}\, \text{ and }\,  \|Z'\|_{\infty}.
\]
We therefore first control these terms. For $\|Y'\|_{\infty}$, we have
\begin{align}
\|Y'\|_{\infty}=\sup_{0\le u\le T}|Y'_u|\le \sup_{0\le u\le T}|Y'_u-Y'_0|+|Y'_0|
\le  T^{\al}\sup_{0\le u\le T}{|Y'_u-Y'_0|\over |u-0|^{\al}}+|Y'_0|
\le T^{\al}\|Y'\|_{\al}+|Y'_0|. \mlabel{eq:6}
\end{align}
Similarly,
\begin{align}
\|Z'\|_{\infty}\le T^{\al}\|Z'\|_{\al}+|Z'_0|. \mlabel{eq:13}
\end{align}
Substituting~(\ref{eq:6}) and~(\ref{eq:13}) into~(\ref{eq:5}),
\begin{align}
&\ \left|\int_PY_r\,d\Z_r-\int_{P\setminus \{t_j\}}Y_r\,d\Z_r\right|\nonumber\\
\le&\ \Big(1+T^{\al}+T^{2\al}\Big)\Big(|Y'_0|+\|R^{0, \Y}\|_{2\al}+\|R^{1, \Y}\|_{\al}\Big)\Big(|Z'_0|+\|R^{0, \Z}\|_{2\al}+\|R^{1, \Z}\|_{\al}\Big)\nonumber\\
&\ \Big(1+\|X\|_{\al}+\|\x\|_{2\al}\Big)|t_{j+1}-t_{j-1}|^{3\al} \nonumber\\
\overset{(\ref{eq:note4}),(\ref{eq:norm2})}{=}&\ \Big(1+T^{\al}+T^{2\al}\Big)\Big(|Y'_0|+\|\Y\|_{\X; \al}\Big)\Big(|Z'_0|+\|\Z\|_{\X; \al}\Big)\Big(1+\|\X\|_{\al}\Big)|t_{j+1}-t_{j-1}|^{3\al} \mlabel{eq:14}.
\end{align}
Since
\begin{equation*}
\sum_{j=1}^{n-1}(t_{j+1}-t_{j-1})=(t_n-t_1)+(t_{n-1}-t_0)\le 2(t-s),
\end{equation*}
there exists an index $j$ such that
\[t_{j+1}-t_{j-1}\le \frac{2(t-s)}{n-1},\]
whence
\begin{align*}
&\ \left|\int_PY_r\,d\Z_r-\int_{P\setminus \{t_j\}}Y_r\,d\Z_r\right|\\
\le&\ \Big(1+T^{\al}+T^{2\al}\Big)(|Y'_0|+\|\Y\|_{\X; \al})
(|Z'_0|+\|\Z\|_{\X; \al})(1+\|\X\|_{\al})\Big(\frac{2(t-s)}{n-1}\Big)^{3\al}.
\end{align*}
Now the new partition $P\setminus \{t_j\}$ has one point less than the original one and we continue this procedure until all points in $P$ (except for the endpoints $s$, $t$) are removed. A direct application of the triangle inequality yields the desired conclusion:
\begin{align}
&\ \left|\int_PY_r\,d\Z_r-\int_{\{s, t\}}Y_r\,d\Z_r\right|\nonumber\\
\le&\ \Big(1+T^{\al}+T^{2\al}\Big)(|Y'_0|+\|\Y\|_{\X;\al})(|Z'_0|+\|\Z\|_{\X;\al})\nonumber\\
&\ \times (1+\|\X\|_{\al})\big(2(t-s)\big)^{3\al}\Big({1\over (n-1)^{3\al}}+{1\over (n-2)^{3\al}}+\cdots+1\Big)\nonumber\\
\le&\ C_{\al, T}(|Y'_0|+\|\Y\|_{\X; \al})(|Z'_0|+\|\Z\|_{\X; \al})(1+\|\X\|_{\al})|t-s|^{3\al}, \mlabel{eq:15}
\end{align}
where
\[C_{\al, T}:=(1+|T|^{\al}+|T|^{2\al})2^{3\al}\sum_{n=1}^{\infty}{1\over n^{3\al}}<\infty \hspace{1cm} (\text{by $3\al>1$}).\]

To prove the convergence of Riemann-sum approximation, let $P$ and $\tilde{P}$ be two given partitions of the interval $[s, t]$.
Let $P'$ be the partition formed by all points in $P$ and $\tilde{P}$. For each sub-interval $[s_l, s_{l+1}]$ in $P$, the estimate~(\ref{eq:15}) implies that
\begin{align}
&\ \left|\int_{P'\cap [s_l, s_{l+1}]}Y_r\,d\Z_r-\int_{\{s_l, s_{l+1}\}}Y_r\,d\Z_r\right|\nonumber\\
&\ \le C_{\al, T}(|Y'_0|+\|\Y\|_{\X; \al})(|Z'_0|+\|\Z\|_{\X; \al})(1+\|\X\|_{\al})|s_{l+1}-s_l|^{3\al}.
\mlabel{eq:16}
\end{align}
By summing over $l$,
\begin{align}
&\ \left|\int_{P'}Y_r\,d\Z_r-\int_{P}Y_r\,d\Z_r\right|\nonumber\\
\le&\ \sum_{s_l\in P}\left|\int_{P'\cap [s_l, s_{l+1}]}Y_r\,d\Z_r-\int_{\{s_l, s_{l+1}\}}Y_r\,d\Z_r\right|\nonumber\\
\overset{(\ref{eq:16})}{\le} &\ C_{\al, T}(|Y'_0|+\|\Y\|_{\X; \al})(|Z'_0|+\|\Z\|_{\X; \al})(1+\|\X\|_{\al})\Big(\sum_{s_l\in P}(s_{l+1}-s_l)^{3\al}\Big) \nonumber\\
=&\ C_{\al, T}(|Y'_0|+\|\Y\|_{\X; \al})(|Z'_0|+\|\Z\|_{\X; \al})
 (1+\|\X\|_{\al})\Big(\sum_{s_l\in P}(s_{l+1}-s_l)^{3\al-1}(s_{l+1}-s_l)\Big) \nonumber\\
\le&\ C_{\al, T}(|Y'_0|+\|\Y\|_{\X; \al})(|Z'_0|+\|\Z\|_{\X; \al})(1+\|\X\|_{\al})\Big(\sum_{s_l\in P}|P|^{3\al-1}(s_{l+1}-s_l)\Big)\nonumber\\
=&\ C_{\al, T}(|Y'_0|+\|\Y\|_{\X; \al})(|Z'_0|+\|\Z\|_{\X; \al})(1+\|\X\|_{\al})|P|^{3\al-1}\sum_{s_l\in P}(s_{l+1}-s_l)\nonumber\\
=&\ C_{\al, T}(|Y'_0|+\|\Y\|_{\X; \al})(|Z'_0|+\|\Z\|_{\X; \al})(1+\|\X\|_{\al})|P|^{3\al-1}|t-s|.\mlabel{eq:17}
\end{align}
A similar estimate holds with $P$ replaced by $\tilde{P}$:
\begin{align}
&\ \left|\int_{P'}Y_r\,d\Z_r-\int_{\tilde{P}}Y_r\,d\Z_r\right|\nonumber\\
&\ \le C_{\al, T}(|Y'_0|+\|\Y\|_{\X; \al})(|Z'_0|+\|\Z\|_{\X; \al})(1+\|\X\|_{\al})|\tilde{P}|^{3\al-1}|t-s|.
\mlabel{eq:18}
\end{align}
Combining~(\ref{eq:17}) and~(\ref{eq:18}),
\begin{align*}
&\ \left|\int_{\tilde{P}}Y_r\,d\Z_r-\int_{P}Y_r\,d\Z_r\right|\\
&\ =\left|\left(\int_{\tilde{P}}Y_r\,d\Z_r-\int_{P'}Y_r\,d\Z_r\right)+\left(\int_{P'}Y_r\,d\Z_r-\int_{P}Y_r\,d\Z_r\right)\right|\\
&\ \le\left|\int_{P'}Y_r\,d\Z_r-\int_{\tilde{P}}Y_r\,d\Z_r\right|+\left|\int_{P'}Y_r\,d\Z_r-\int_{P}Y_r\,d\Z_r\right|\\
&\ \le C_{\al, T}(|Y'_0|+\|\Y\|_{\X; \al})(|Z'_0|+\|\Z\|_{\X; \al})(1+\|\X\|_{\al})\big(|\tilde{P}|^{3\al-1}+|P|^{3\al-1}\big)|t-s|.
\end{align*}
The left-hand side can be made arbitrarily small when the mesh sizes of $P$ and $\tilde{P}$ are small enough. The existence of $\lim_{|P|\to 0 }\int_{P}Y_r\,d\Z_r $ thus follows from the Cauchy criterion. Hence the definition of the integral $\int_s^t Y_r\,d\Z_r$ is well-defined.
\end{proof}

We derive an estimate for level-$2$ rough integrals, which differs from the one obtained in~\cite[Theorem~1]{Gu04} and will be used in the next section.

\begin{coro}
With the setting in Theorem~\ref{thm:1},  there exists a constant $C_{\al, T}$ in $\RR$ such that the bound
\begin{align}
\Big|\int_s^t Y_r\,d\Z_r-\Big(Y_s  Z_{s,t}+Y'_sZ'_{s} \x_{s,t}\Big)\Big|
\le&\ C_{\al, T}(|Y_0'|+\|\Y\|_{\X; \al})(|Z_0'|+\|\Z\|_{\X; \al})(1+\|\X\|_{\al})|t-s|^{3\al}\mlabel{eq:21}
\end{align}
holds for all $0\le s\leq t\le T$.
\mlabel{coro:esti}
\end{coro}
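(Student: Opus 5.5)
The plan is to pass to the limit in the uniform partition estimate~\eqref{eq:15} obtained in the proof of Theorem~\ref{thm:1}. Fix $0\le s\le t\le T$. If $s=t$ both sides of~\eqref{eq:21} vanish, so assume $s<t$. For every finite partition $P$ of $[s,t]$, the proof of Theorem~\ref{thm:1} shows, via successive point removal, that
\[
\Big|\int_P Y_r\,d\Z_r-\int_{\{s,t\}}Y_r\,d\Z_r\Big|
\le C_{\al,T}(|Y_0'|+\|\Y\|_{\X;\al})(|Z_0'|+\|\Z\|_{\X;\al})(1+\|\X\|_{\al})|t-s|^{3\al}.
\]
Here $\int_{\{s,t\}}Y_r\,d\Z_r=Y_sZ_{s,t}+Y'_sZ'_s\x_{s,t}$ by definition of the enhanced Riemann sum over the trivial partition. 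The key point is that the right-hand side does not depend on $P$. The constant $C_{\al,T}=(1+T^\al+T^{2\al})2^{3\al}\zeta(3\al)$ is finite because $3\al>1$. It is uniform in the number $n$ of points, since the sum $\sum_{k=1}^{n-1}k^{-3\al}$ is bounded by the full series.

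Next I will take a sequence of partitions $P_m$ of $[s,t]$ with $|P_m|\to0$. By Theorem~\ref{thm:1}, $\int_{P_m}Y_r\,d\Z_r\to\int_s^tY_r\,d\Z_r$ in $U$. Since the norm is continuous, letting $m\to\infty$ in the displayed inequality yields~\eqref{eq:21} with the same constant $C_{\al,T}$.

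The only step needing care is to confirm that the iterative removal argument behind~\eqref{eq:15} really gives a bound independent of $P$. At each stage with $k+1$ points, one removes a point $t_j$ satisfying $t_{j+1}-t_{j-1}\le 2(t-s)/(k-1)$. The one-step bound~\eqref{eq:14} is applied with global norms on $[0,T]$, namely $\|Y'\|_\infty\le T^\al\|Y'\|_\al+|Y'_0|$ and similarly for $Z'$. This makes the constants depend only on $T$, on the initial values $Y_0',Z_0'$, and on the seminorms. There is no dependence on $s$, so the bound holds simultaneously for all $0\le s\le t\le T$.

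I expect no genuine obstacle here. The mild subtlety is bookkeeping: the products of $|t_j-t_{j-1}|^{a}|t_{j+1}-t_j|^{b}$ with $a+b\ge3\al$ must be bounded by $|t_{j+1}-t_{j-1}|^{3\al}$. When $a+b=4\al>3\al$, as for the $R^{0,\Y}R^{0,\Z}$ term, this costs a factor $T^{\al}$, which is absorbed into $1+T^\al+T^{2\al}$. With that in place, the corollary follows immediately from the proof of Theorem~\ref{thm:1}.
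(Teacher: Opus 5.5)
Your proposal is correct and is the paper's own argument: the paper proves the corollary in one line from the partition-uniform bound \eqref{eq:15}, and your limit $|P|\to 0$ via Theorem~\ref{thm:1} is exactly the step left implicit there. One caveat on your bookkeeping check, which also affects the paper's passage from \eqref{eq:5} to \eqref{eq:14}: the fourth term of \eqref{eq:5} carries $\|X\|_{\alpha}^2$, and counting exponents alone does not let the factor $(1+\|\mathbf{X}\|_{\alpha})$ absorb it. It is absorbed via Chen's relation: choosing $u\in[s,t]$ with $|X_{s,u}|=|X_{u,t}|$ gives $|X_{s,t}|^2\le 4|X_{s,u}\otimes X_{u,t}|\le 12\|\mathbb{X}\|_{2\alpha}|t-s|^{2\alpha}$, so \eqref{eq:21} holds with some enlarged constant rather than literally the displayed $C_{\alpha,T}$.
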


\begin{proof}
It is directly derived from~(\ref{eq:15}).
\end{proof}

\section{Rough differential equations driven by controlled rough paths}\label{ss:sec3}
The goal of this section is to study controlled-driven rough differential equations, namely rough differential equations driven by an
$\X$-controlled path $\Z=(Z,Z')$:
\[
dY = F(Y)\,d\mathbf Z, \quad Y(0)=Y_0.
\]
This differential equation is interpreted as an integral equation
\begin{equation}
Y_t=Y_0+\int_0^t F(Y_r)\,d\Z_r.
\mlabel{eq:add2}
\end{equation}
To this end, we first show that the rough integral of one controlled rough path against another produces a controlled rough path. This extends the classical situation in which a controlled rough path is integrated against a rough path.


\begin{proposition}
Let $\alpha\in\left(\frac13,\frac12\right]$, let $\X =(1, X, \x)\in \D$, and let $\Y = (Y, Y')\in \CC([0, T], \mathcal{L}(W, U))$  and $\Z = (Z, Z')\in \CC([0, T], W)$. Then
\[\Big(\int_0^{\bullet} Y_r\,d\Z_r, YZ' \Big)\in  \CC([0, T], U).\]
\mlabel{prop:1}
\end{proposition}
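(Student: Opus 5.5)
We need to check the two remainder conditions for the pair $(I, I'):=(\int_0^\bullet Y_r\,d\Z_r,\; YZ')$. Here $N=2$, so these are
\[
R^{0}_{s,t}:=I_{s,t}-Y_sZ'_sX_{s,t}=O(|t-s|^{2\al}),\qquad R^{1}_{s,t}:=(YZ')_{s,t}=O(|t-s|^{\al}).
\]
The first step is additivity of the integral. For $s<u<t$ we have $\int_s^t=\int_s^u+\int_u^t$: take partitions of $[s,t]$ containing $u$ and pass to the limit, which Theorem~\ref{thm:1} allows since the limit is independent of the partitions chosen. Hence $I_{s,t}=\int_s^tY_r\,d\Z_r$.

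\textbf{Level-zero remainder.} Write
\[
I_{s,t}-Y_sZ'_sX_{s,t}=\Big(I_{s,t}-Y_sZ_{s,t}-Y'_sZ'_s\x_{s,t}\Big)+Y_sR^{0,\Z}_{s,t}+Y'_sZ'_s\x_{s,t},
\]
using $Z_{s,t}=Z'_sX_{s,t}+R^{0,\Z}_{s,t}$. Each term is then bounded separately:
\begin{itemize}[label={}]
\item The bracket is bounded by Corollary~\ref{coro:esti} by a constant times $|t-s|^{3\al}\le T^{\al}|t-s|^{2\al}$.
\item The middle term is at most $\|Y\|_\infty\|R^{0,\Z}\|_{2\al}|t-s|^{2\al}$.
\item The last term is at most $\|Y'\|_\infty\|Z'\|_\infty\|\x\|_{2\al}|t-s|^{2\al}$.
\end{itemize}
The sup norms are controlled as in \eqref{eq:6} and \eqref{eq:13}: $\|Y'\|_\infty\le |Y'_0|+T^\al\|R^{1,\Y}\|_\al$. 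For $Y$ itself, $|Y_{0,u}|\le \|Y'\|_\infty\|X\|_\al T^\al+\|R^{0,\Y}\|_{2\al}T^{2\al}$, so $\|Y\|_\infty\le |Y_0|+M(T,\|\X\|_\al,|Y'_0|,\|\Y\|_{\X;\al})$. This gives $\|R^0\|_{2\al}<\infty$.

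\textbf{Level-one remainder.} Expand
\[
(YZ')_{s,t}=Y_{s,t}Z'_t+Y_sZ'_{s,t},
\]
so that
\[
|(YZ')_{s,t}|\le \big(\|Y'\|_\infty\|X\|_\al+\|R^{0,\Y}\|_{2\al}T^{\al}\big)\|Z'\|_\infty|t-s|^\al+\|Y\|_\infty\|R^{1,\Z}\|_\al|t-s|^\al.
\]
Note that $YZ'$ takes values in $\mathcal L(V,U)$, as required for a Gubinelli derivative with values in $U$. Combining both bounds shows that $(I,YZ')\in\CC([0,T],U)$.

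\textbf{Main obstacle.} The main point is to justify the use of Corollary~\ref{coro:esti} on arbitrary subintervals $[s,t]$. This requires the additivity and interval-independence of the integral, so that $I_{s,t}$ really equals the limit over partitions of $[s,t]$. Beyond that, one only has to track the constants in terms of $|Y_0|$, $|Y'_0|$, $|Z'_0|$ and the seminorms, which is needed later for the fixed-point argument.
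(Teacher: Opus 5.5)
Your proposal is correct and follows the paper's proof essentially line for line. You split $I_{s,t}-Y_sZ'_sX_{s,t}$ into the Corollary~\ref{coro:esti} remainder plus $Y_sR^{0,\Z}_{s,t}+Y'_sZ'_s\x_{s,t}$, and you bound $(YZ')_{s,t}=Y_{s,t}Z'_t+Y_sZ'_{s,t}$ by the product rule, exactly as the paper does. Your explicit justification of the additivity $I_{s,t}=\int_s^tY_r\,d\Z_r$ and of the sup-norm bounds on $Y$, $Y'$, $Z'$ fills in steps the paper leaves implicit.
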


\begin{proof}
Set
\[
I_t:=\int_0^t Y_r\,d\Z_r,
\qquad
I'_t:=Y_tZ'_t.
\]
We prove that $\mathbf I:=(I,I')\in C^\alpha_{\X}([0,T],U)$. By Definition~\ref{def:crp}, it suffices to prove that
\[
\|R^{0, \mathbf I}\|_{2\al}<\infty, \quad \|R^{1, \mathbf I}\|_{\al}<\infty,
\]
where
\begin{equation}
R^{0, \mathbf I}_{s, t}:=I_{s,t}-I'_sX_{s,t}=\int_s^t Y_r\,d\Z_r-Y_sZ'_sX_{s,t}, \quad R^{1, \mathbf I}_{s, t}:=I_t'-I_s'=Y_tZ'_t-Y_sZ'_s.
\mlabel{eq:26}
\end{equation}
First, $\|R^{0, \mathbf I}\|_{2\al}<\infty$ follows from
\begin{align*}
|R^{0, \mathbf I}_{s, t}|=&\ \Big| \int_s^t Y_r\,d\Z_r-Y_sZ'_sX_{s,t} \Big| \\
=&\ \Big| \Big(\int_s^t Y_r\,d\Z_r-Y_sZ_{s, t}-Y'_sZ'_s\x_{s, t}\Big)+ (Y_sZ_{s, t}+Y'_sZ'_s\x_{s, t}-Y_sZ'_sX_{s,t}) \Big|\\
\le&\  \Big|\int_s^t Y_r\,d\Z_r-Y_sZ_{s, t}-Y'_sZ'_s\x_{s, t}\Big|+ |Y_sZ_{s, t}-Y_sZ'_sX_{s, t}|+|Y'_sZ'_s\x_{s,t}|\\
\le&\  \Big|\int_s^t Y_r\,d\Z_r-Y_sZ_{s, t}-Y'_sZ'_s\x_{s, t}\Big|+ |Y_s|\,|Z_{s, t}-Z'_sX_{s,t}|+|Y'_s|\,|Z'_s|\,|\x_{s, t}|\\
\overset{(\ref{eq:-1})}{=}&\  \Big|\int_s^t Y_r\,d\Z_r-Y_sZ_{s, t}-Y'_sZ'_s\x_{s, t}\Big|+ |Y_s|\,|R^{0, \Z}_{s, t}|+|Y'_s|\,|Z'_s|\,|\x_{s, t}| \\
\overset{(\ref{eq:21}), (\ref{eq:remai})}{\leq}&\ C_{\al, T}(|Y_0'|+\|\Y\|_{\X; \al})(|Z_0'|+\|\Z\|_{\X; \al})(1+\|\X\|_{\al})|t-s|^{3\al}+|Y_s|\,\|R^{0, \Z}\|_{2\al}|t-s|^{2\al}+|Y'_s|\,|Z'_s|\,\|\x\|_{2\al}|t-s|^{2\al}.
\end{align*}
Further, $ \|R^{1, \mathbf I}\|_{\al}<\infty$ can be computed directly by
\begin{align*}
|R^{1, \mathbf I}_{s, t}| &\ =|Y_tZ'_t-Y_sZ'_s|= |(Y_t-Y_s)Z'_t+Y_s(Z'_t-Z'_s)|\\
\le&\ |Y_t-Y_s|\,|Z'_t|+|Y_s|\,|Z'_t-Z'_s|
\le \|Y\|_{\al}|Z'_t||t-s|^{\al}+|Y_s|\,\|Z'\|_{\al}|t-s|^{\al}.\qedhere
\end{align*}
\end{proof}

We review that the composition of a controlled rough path with a regular function is still a controlled rough path. For $m\in \mathbb{Z}_{\geq 1}$, denote by $C^m(U ; W)$ the space of $m$ times continuously differentiable maps from Banach space $U$ to Banach space $W$.
For any $F\in  C^m(U ; W)$, define
\begin{equation*}
\|F\|_{\infty }:=\sup_{x\in U}\|F(x)\|.
\end{equation*}
A subspace $$C^{m}_b(U; W) \subseteq  C^m(U ; W) $$ is given by those $F$ in $C^m(U ; W) $ such that
\begin{equation*}
\|F\|_{C^{m}_b}:=\|F\|_{\infty }+\|DF\|_{\infty }+\cdots +\|D^mF\|_{\infty } < \infty.
\end{equation*}

\begin{lemma}\cite[Proposition 4]{Gu04}
Let $\alpha\in\left(\frac13,\frac12\right]$, let $\X =(1, X, \x)\in \D$, and let $\Y = (Y, Y')\in \CC([0, T], W)$. If $F\in C_b^2(W, U)$, then
\[F(\Y):=(F(Y), DF(Y)Y')\in \CC([0, T], U).\]
\mlabel{lem:2}
\end{lemma}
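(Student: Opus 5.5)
We must check that the two remainders of $F(\Y)=(F(Y),DF(Y)Y')$ are finite in the sense of Definition~\ref{def:crp} with $N=2$, namely
\[
R^{0,F(\Y)}_{s,t}=F(Y_t)-F(Y_s)-DF(Y_s)Y'_sX_{s,t},\qquad R^{1,F(\Y)}_{s,t}=DF(Y_t)Y'_t-DF(Y_s)Y'_s,
\]
and show $\|R^{0,F(\Y)}\|_{2\al}<\infty$ and $\|R^{1,F(\Y)}\|_{\al}<\infty$. Before that, we collect bounds on $\Y$ itself. From~\eqref{eq:-1}, $Y_{s,t}=Y'_sX_{s,t}+R^{0,\Y}_{s,t}$. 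As in~\eqref{eq:6}, $\|Y'\|_\infty\le T^\al\|R^{1,\Y}\|_\al+|Y'_0|$. This gives the H\"older bound
\[
\|Y\|_\al\le \|Y'\|_\infty\|X\|_\al+T^{\al}\|R^{0,\Y}\|_{2\al}<\infty.
\]

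\textbf{Level-one remainder.} We use Taylor's formula with integral remainder, valid for $F\in C^2_b$ on Banach spaces:
\[
F(Y_t)-F(Y_s)=DF(Y_s)Y_{s,t}+\int_0^1(1-\theta)D^2F(Y_s+\theta Y_{s,t})(Y_{s,t},Y_{s,t})\,d\theta.
\]
Substituting $Y_{s,t}=Y'_sX_{s,t}+R^{0,\Y}_{s,t}$ in the first term gives
\[
R^{0,F(\Y)}_{s,t}=DF(Y_s)R^{0,\Y}_{s,t}+\int_0^1(1-\theta)D^2F(\cdots)(Y_{s,t},Y_{s,t})\,d\theta .
\]
Hence
\[
|R^{0,F(\Y)}_{s,t}|\le \|DF\|_\infty\|R^{0,\Y}\|_{2\al}|t-s|^{2\al}+\tfrac12\|D^2F\|_\infty\|Y\|_\al^2|t-s|^{2\al}.
\]

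\textbf{Derivative remainder.} We split
\[
DF(Y_t)Y'_t-DF(Y_s)Y'_s=\bigl(DF(Y_t)-DF(Y_s)\bigr)Y'_t+DF(Y_s)\bigl(Y'_t-Y'_s\bigr).
\]
The mean value inequality bounds the first term by $\|D^2F\|_\infty\|Y\|_\al\|Y'\|_\infty|t-s|^\al$. The second term is at most $\|DF\|_\infty\|R^{1,\Y}\|_\al|t-s|^\al$.

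Collecting these estimates gives a quantitative bound of the form
\[
\|F(\Y)\|_{\X;\al}\le M\bigl(T,\|\X\|_\al,|Y'_0|,\|\Y\|_{\X;\al}\bigr)\,\|F\|_{C^2_b},
\]
which is what later fixed-point arguments will need. There is no real obstacle here. The only point requiring care is the Banach-space Taylor expansion (using the integral form to avoid compactness issues) and keeping the $\|Y'\|_\infty$ bound expressed through $|Y'_0|$ and the seminorm.
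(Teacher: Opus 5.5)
Your argument is correct. The Taylor expansion with integral remainder, the splitting of $DF(Y_t)Y'_t-DF(Y_s)Y'_s$, and the bounds on $\|Y'\|_\infty$ and $\|Y\|_{\al}$ form the standard proof of this composition result.

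The paper gives no proof of its own here. It cites \cite[Proposition 4]{Gu04}, and for the quantitative version in Lemma~\ref{lem:4}(1) it cites \cite[Theorem 4.1]{BG22} together with the elementary bounds \eqref{eq:26+3} and \eqref{eq:26+7}. Your write-up is essentially the argument behind those citations, and it also yields the bound of Lemma~\ref{lem:4}(1).
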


\begin{coro}
Let  $\alpha\in\left(\frac13,\frac12\right]$, let $\X =(1, X, \x)\in \D$, and let $\Y = (Y, Y')\in \CC([0, T], U)$ and $\Z = (Z, Z')\in \CC([0, T], W)$. Assume that $F\in C_b^2(U, \mathcal{L}(W, U))$. Then
\begin{equation}
\left(\int_0^{\bullet} F(Y_r)\,d\Z_r, F(Y)Z' \right)\in  \CC([0, T], U).
\mlabel{eq:add1}
\end{equation}
\mlabel{coro:3}
\end{coro}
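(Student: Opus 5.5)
The plan is to combine Lemma~\ref{lem:2} with Proposition~\ref{prop:1}, applied with the value spaces chosen appropriately. First, since $\Y=(Y,Y')\in\CC([0,T],U)$ and $F\in C_b^2(U,\mathcal L(W,U))$, Lemma~\ref{lem:2} (with $W$ there replaced by $U$ and $U$ there replaced by $\mathcal L(W,U)$) gives that
\[
F(\Y)=\bigl(F(Y),\,DF(Y)Y'\bigr)\in \CC\bigl([0,T],\mathcal L(W,U)\bigr).
\]
Here the Gubinelli derivative $DF(Y_t)Y'_t$ is read as an element of $\mathcal L(V,\mathcal L(W,U))$ via $x\mapsto DF(Y_t)(Y'_t x)$. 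This is the correct type for the first argument of Proposition~\ref{prop:1}.

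Next I apply Proposition~\ref{prop:1} with the integrand $F(\Y)$ in place of $\Y$ and the driver $\Z=(Z,Z')\in\CC([0,T],W)$. The proposition yields
\[
\Bigl(\int_0^\bullet F(Y_r)\,d\Z_r,\ F(Y)Z'\Bigr)\in\CC([0,T],U),
\]
which is exactly \eqref{eq:add1}. The integral is well defined by Theorem~\ref{thm:1}, because both inputs are $\X$-controlled. Its defining compensated sums read $F(Y_u)Z_{u,v}+DF(Y_u)Y'_uZ'_u\x_{u,v}$, where the second term is interpreted as in the Remark following Theorem~\ref{thm:1}.

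The only step needing care is bookkeeping, not analysis. I have to check that the composite $DF(Y)Y'$ has the type that Proposition~\ref{prop:1} expects of $Y'$, namely $\mathcal L(V,\mathcal L(W,U))$, and that the product $(DF(Y)Y')\,Z'\,\x$ makes sense as in the Remark. Both follow from the identifications just stated.

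If quantitative control is wanted for later use, I would also record the bound
\[
\|F(\Y)\|_{\X;\al}\le M\bigl(\|F\|_{C_b^2},\|\X\|_\al,|Y'_0|,\|\Y\|_{\X;\al},T\bigr),
\]
which comes from the proof of Lemma~\ref{lem:2} via a Taylor expansion of $F$ to second order. I would then insert it into the estimates in the proof of Proposition~\ref{prop:1}; these in turn rest on Corollary~\ref{coro:esti} together with \eqref{eq:6} and \eqref{eq:13}. This last quantitative step is not needed for the membership statement itself, so I do not expect a real obstacle.
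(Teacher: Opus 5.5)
Your proposal is correct and matches the paper's proof, which simply says the corollary follows from Lemma~\ref{lem:2} and Proposition~\ref{prop:1}. That is, apply Lemma~\ref{lem:2} to get $F(\Y)=(F(Y),DF(Y)Y')\in\CC([0,T],\mathcal L(W,U))$, then apply Proposition~\ref{prop:1} with integrand $F(\Y)$ and driver $\Z$; your type bookkeeping for $DF(Y)Y'\in\mathcal L(V,\mathcal L(W,U))$ just makes explicit what the paper leaves implicit.
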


\begin{proof}
It follows from Proposition~\ref{prop:1} and Lemma~\ref{lem:2}.
\end{proof}

Before proving the following estimates, we record two elementary bounds that will be used repeatedly. Let
\begin{equation}
|Y'_s|\le |Y'_0|+|Y'_s-Y'_0|\le |Y'_0|+|s|^\al\|Y'\|_{\al}\le |Y'_0|+T^{\al}\|Y'\|_{\al}.
\mlabel{eq:26+3}
\end{equation}
Moreover, using the controlled expansion
\[
Y_{s,t}=Y'_sX_{s,t}+R^{0,\Y}_{s,t},
\]
we obtain
\begin{align}
\|Y\|_{\al}=\sup_{0\le s\le t\le T}\frac{|Y_{s, t}|}{|t-s|^\al}\le&\ \sup_{0\le s\le t\le T}\frac{|Y'_sX_{s, t}+R^{0,\Y}_{s, t}|}{|t-s|^\al} \nonumber\\
\le&\ \sup_{0\le s\le t\le T}\frac{|Y'_sX_{s, t}|}{|t-s|^\al}+\sup_{0\le s\le t\le T}\frac{|R^{0,\Y}_{s, t}|}{|t-s|^\al}\nonumber\\
\overset{(\ref{eq:26+3})}{\le}&\ (|Y'_0|+T^{\al}\|Y'\|_{\al})\sup_{0\le s\le t\le T}\frac{|X_{s, t}|}{|t-s|^\al}+T^\al\sup_{0\le s\le t\le T}\frac{|R^{0,\Y}_{s, t}|}{|t-s|^{2\al}} \nonumber\\
=&\ (|Y'_0|+T^{\al}\|Y'\|_{\al})\|X\|_{\al}+T^\al\|R^{0,\Y}\|_{2\al}. \mlabel{eq:26+7}
\end{align}
Similarly,
\begin{equation}
|Y_s-\tilde Y_s|\le |Y_0-\tilde Y_0|+T^{\al}\Big((|Y'_0-\tilde Y'_0|+T^{\al}\|Y'-\tilde Y'\|_{\al})\|X\|_{\al}+T^\al\|R^{0,\Y-\tilde \Y}\|_{2\al} \Big)
\mlabel{eq:27+5}
\end{equation}

To estimate the controlled rough path in~\meqref{eq:add1}, we first record two preliminary bounds in the next lemma.

\begin{lemma}
Let $\alpha\in(1/3,1/2]$, let
\[
\X=(1,X,\mathbb X)\in D^\alpha(\Delta_T,V).
\]

\begin{enumerate}
\item If $\Y=(Y,Y')\in C^\alpha_{\X}([0,T],W)$ and $F\in C_b^2(W,U)$, then
\[
F(\Y):=(F(Y),DF(Y)Y')\in C^\alpha_{\X}([0,T],U),
\]
and
\[
\|F(\Y)\|_{\X;\alpha}
\leq
\|F\|_{C_b^2}
M\bigl(T,|Y'_0|,\|\Y\|_{\X;\alpha},\|\X\|_\alpha\bigr).
\]

\item If $\Y,\widetilde{\Y}\in C^\alpha_{\X}([0,T],W)$ and
$F\in C_b^3(W,U)$, then
\begin{align}
\|F(\Y)-F(\widetilde{\Y})\|_{\X;\alpha}
&\leq
\|F\|_{C_b^3}
M\bigl(
T,
|Y'_0|,
|\widetilde Y'_0|,
\|\Y\|_{\X;\alpha},
\|\widetilde{\Y}\|_{\X;\alpha},
\|\X\|_\alpha
\bigr) \nonumber\\
&\quad\times
\left(
\|\Y-\widetilde{\Y}\|_{\X;\alpha}
+
|Y_0-\widetilde Y_0|
+
|Y'_0-\widetilde Y'_0|
\right).
\mlabel{eq:27+1}
\end{align}
\end{enumerate}
\mlabel{lem:4}
\end{lemma}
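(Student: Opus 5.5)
We prove both parts by writing the two remainders of $F(\Y)$ explicitly and bounding them with Taylor's formula, using the controlled expansion $Y_{s,t}=Y'_sX_{s,t}+R^{0,\Y}_{s,t}$. Throughout, $|Y'_s|$ and $\|Y\|_\al$ are controlled by \eqref{eq:26+3} and \eqref{eq:26+7}. These bounds are what allow every constant to be expressed through $T$, $|Y'_0|$, $\|\Y\|_{\X;\al}$ and $\|\X\|_\al$ only.

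For part (1), the remainders of $F(\Y)$ are
\[
R^{0,F(\Y)}_{s,t}=F(Y_t)-F(Y_s)-DF(Y_s)Y'_sX_{s,t},\qquad R^{1,F(\Y)}_{s,t}=DF(Y_t)Y'_t-DF(Y_s)Y'_s .
\]
For $R^{0}$, Taylor's theorem gives
\[
F(Y_t)-F(Y_s)=DF(Y_s)Y_{s,t}+\int_0^1(1-\theta)D^2F(Y_s+\theta Y_{s,t})[Y_{s,t},Y_{s,t}]\,d\theta .
\]
Hence
\[
R^{0,F(\Y)}_{s,t}=DF(Y_s)R^{0,\Y}_{s,t}+O\bigl(\|D^2F\|_\infty|Y_{s,t}|^2\bigr),
\]
which is bounded by $\|F\|_{C^2_b}\bigl(\|R^{0,\Y}\|_{2\al}+\|Y\|_\al^2\bigr)|t-s|^{2\al}$. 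For $R^1$, split it as
\[
R^{1,F(\Y)}_{s,t}=\bigl(DF(Y_t)-DF(Y_s)\bigr)Y'_t+DF(Y_s)Y'_{s,t}.
\]
This gives the bound $\|F\|_{C^2_b}\bigl(\|Y\|_\al\|Y'\|_\infty+\|Y'\|_\al\bigr)|t-s|^\al$. Substituting \eqref{eq:26+3} and \eqref{eq:26+7} yields the function $M$; it is polynomial in its arguments and hence increasing.

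For part (2), set $\Delta:=Y-\widetilde Y$ and $\Delta':=Y'-\widetilde Y'$. We would first bound the sup-norm quantities $\|\Delta\|_\infty$ (by \eqref{eq:27+5}), $\|\Delta\|_\al$ (by the analogue of \eqref{eq:26+7} applied to $\Y-\widetilde\Y$, which is $\X$-controlled by linearity) and $\|\Delta'\|_\infty$. Each is bounded by $M\cdot\bigl(\|\Y-\widetilde\Y\|_{\X;\al}+|Y_0-\widetilde Y_0|+|Y'_0-\widetilde Y'_0|\bigr)$.

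The $R^1$ difference is handled by telescoping:
\[
DF(Y_t)Y'_t-DF(\widetilde Y_t)\widetilde Y'_t=DF(Y_t)\Delta'_t+\bigl(DF(Y_t)-DF(\widetilde Y_t)\bigr)\widetilde Y'_t .
\]
The increment in $(s,t)$ of the second product requires an $\al$-Hölder bound on $t\mapsto DF(Y_t)-DF(\widetilde Y_t)$. We write this difference as $\int_0^1 D^2F(\widetilde Y+\theta\Delta)\,d\theta\,[\Delta]$ and use $D^3F$ to control its increments.

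The main obstacle is the $R^0$ difference, where a naive subtraction of the Taylor expansions produces terms such as $|Y_{s,t}|^2-|\widetilde Y_{s,t}|^2$. These must be factorised as bilinear expressions: one factor is $\Delta_{s,t}$, of order $|t-s|^\al$, and the other is $Y_{s,t}$ or $\widetilde Y_{s,t}$, also of order $|t-s|^\al$. The coefficient differences $D^2F(Y_s+\theta Y_{s,t})-D^2F(\widetilde Y_s+\theta\widetilde Y_{s,t})$ are bounded by $\|D^3F\|_\infty(|\Delta_s|+|\Delta_{s,t}|)$.

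Concretely, we write
\[
R^{0,F(\Y)}_{s,t}=DF(Y_s)R^{0,\Y}_{s,t}+\int_0^1(1-\theta)D^2F(Y_s+\theta Y_{s,t})[Y_{s,t},Y_{s,t}]\,d\theta,
\]
take the same expression for $\widetilde\Y$, subtract, and split each difference of products term by term. For example,
\[
DF(Y_s)R^{0,\Y}-DF(\widetilde Y_s)R^{0,\widetilde\Y}=DF(Y_s)R^{0,\Y-\widetilde\Y}+\bigl(DF(Y_s)-DF(\widetilde Y_s)\bigr)R^{0,\widetilde\Y}.
\]
Every resulting term carries one difference factor times $|t-s|^{2\al}$, with coefficients bounded by $\|F\|_{C^3_b}M$. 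This gives \eqref{eq:27+1}.
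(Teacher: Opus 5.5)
Your proof is correct, but the paper does not argue this way. Its proof contains no computation: it invokes the composition theorem and its local Lipschitz version from \cite[Theorem 4.1]{BG22}, which is stated for general level $N$. It then asserts that the displayed dependence of the constants follows from \eqref{eq:26+3} and \eqref{eq:26+7}.

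You instead carry out the level-$2$ argument directly:
\begin{itemize}
\item a second-order Taylor expansion gives $R^{0,F(\Y)}_{s,t}=DF(Y_s)R^{0,\Y}_{s,t}+\int_0^1(1-\theta)D^2F(Y_s+\theta Y_{s,t})[Y_{s,t},Y_{s,t}]\,d\theta$;
\item a product-rule splitting handles $R^{1}$;
\item for the difference estimate you factorise the bilinear remainder and use $D^3F$ for the coefficient differences.
\end{itemize}

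The steps you only sketch do close.
\begin{itemize}
\item The Taylor-coefficient difference is bounded by $\|D^3F\|_\infty\|\Delta\|_\infty$, because the two arguments differ by $\Delta_s+\theta\Delta_{s,t}=(1-\theta)\Delta_s+\theta\Delta_t$.
\item With $G_t:=DF(Y_t)-DF(\widetilde Y_t)=\int_0^1D^2F(\widetilde Y_t+\theta\Delta_t)\,d\theta\,[\Delta_t]$ you get $\|G\|_\infty\le\|D^2F\|_\infty\|\Delta\|_\infty$. You also get $\|G\|_\al\le\|D^3F\|_\infty(\|Y\|_\al+\|\widetilde Y\|_\al)\|\Delta\|_\infty+\|D^2F\|_\infty\|\Delta\|_\al$, since $\widetilde Y_t+\theta\Delta_t-(\widetilde Y_s+\theta\Delta_s)=(1-\theta)\widetilde Y_{s,t}+\theta Y_{s,t}$.
\item The increment of $G\widetilde Y'$ is then controlled by $\|G\|_\al\|\widetilde Y'\|_\infty+\|G\|_\infty\|\widetilde Y'\|_\al$.
\end{itemize}

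Your route buys a self-contained verification of the specific form of the bounds, which the paper only asserts:
\begin{itemize}
\item $\|F\|_{C_b^k}$ enters linearly;
\item $M$ does not depend on $|Y_0|$ or $|\widetilde Y_0|$, since only bounded derivatives of $F$ appear;
\item $|Y_0-\widetilde Y_0|$ enters only through $\|\Delta\|_\infty$ via \eqref{eq:27+5};
\item $C_b^3$ regularity is needed only in part (2).
\end{itemize}
The citation is shorter and covers arbitrary level $N$. However, it leaves the reader to check that the constants in \cite{BG22} reduce to exactly the displayed dependence on $T$, $|Y'_0|$, $\|\Y\|_{\X;\al}$ and $\|\X\|_\al$.
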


\begin{proof}
Both assertions are standard consequences of the composition theorem for controlled rough paths and its quantitative local Lipschitz version; see~\cite[Theorem 4.1]{BG22}. The dependence of the constants displayed above follows from the elementary estimates~\eqref{eq:26+3},~\eqref{eq:26+7} and the analogous estimates for $\widetilde{\Y}$.
\end{proof}

To bound the right-hand side of~\eqref{eq:add2}, we control the following controlled rough path.

\begin{proposition}
With the setting in Corollary~\ref{coro:3}, let
\begin{equation}
\J:=\left(Y_0+\int_0^{\bullet} F(Y_r)\,d\Z_r, F(Y)Z' \right) \in  \CC([0, T], U).
\mlabel{eq:30-1}
\end{equation}
Then
\begin{align*}
\|\J\|_{\X; \al}\le&\  (1+C_{\al, T})\Big(T^{\al}M(T, \|F\|_{C_b^2}, \|\X\|_{\al}, |Y'_0|, |Z'_0|, \|\Y\|_{\X; \al}, \|\Z\|_{\X; \al})\nonumber\\
&\ \hspace{1.5cm}+\|F\|_{C_b^2}(1+|Y'_0|)(1+|Z'_0|)(1+\|\X\|_{\al})(1+\|\Z\|_{\X; \al})\Big),
\end{align*}
where $C_{\al, T}$ is a constant in $\RR$.
\end{proposition}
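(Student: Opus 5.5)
We bound the two pieces of the seminorm $\|\J\|_{\X;\al}=\|R^{0,\J}\|_{2\al}+\|R^{1,\J}\|_{\al}$ separately. By Lemma~\ref{lem:4}(1), the composed path $F(\Y)=(F(Y),DF(Y)Y')$ is $\X$-controlled with $\|F(\Y)\|_{\X;\al}\le \|F\|_{C_b^2}M(T,|Y'_0|,\|\Y\|_{\X;\al},\|\X\|_\al)$, and its Gubinelli derivative at $0$ is $DF(Y_0)Y'_0$, so $|F(Y)'_0|\le \|F\|_{C_b^2}|Y'_0|$. Since the constant $Y_0$ does not affect increments, $R^{0,\J}$ and $R^{1,\J}$ coincide with the remainders $R^{0,\mathbf I}$, $R^{1,\mathbf I}$ from the proof of Proposition~\ref{prop:1}, now applied with $F(\Y)$ in place of $\Y$.

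For $R^{0,\J}_{s,t}=\int_s^tF(Y_r)\,d\Z_r-F(Y_s)Z'_sX_{s,t}$, I would use the same three-term splitting as in Proposition~\ref{prop:1}. The first term is handled by Corollary~\ref{coro:esti} applied to $F(\Y)$ and $\Z$. This gives
\[
C_{\al,T}\bigl(|F(Y)'_0|+\|F(\Y)\|_{\X;\al}\bigr)\bigl(|Z'_0|+\|\Z\|_{\X;\al}\bigr)(1+\|\X\|_\al)|t-s|^{3\al}.
\]
Dividing by $|t-s|^{2\al}$ leaves a factor $|t-s|^{\al}\le T^{\al}$, so this contribution goes into the $T^{\al}M(\cdots)$ summand, with the factor $(1+C_{\al,T})$ in front.

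The other two terms do not gain a power of $T$ and produce the explicit second summand:
\begin{itemize}
\item $|F(Y_s)|\,|R^{0,\Z}_{s,t}|\le \|F\|_{C_b^2}\|\Z\|_{\X;\al}|t-s|^{2\al}$;
\item $|DF(Y_s)Y'_s|\,|Z'_s|\,|\x_{s,t}|$, where I use~\eqref{eq:26+3} for both $|Y'_s|$ and $|Z'_s|$.
\end{itemize}
The pieces of the second term carrying a factor $T^{\al}$ (the $T^{\al}\|Y'\|_\al$ and $T^{\al}\|Z'\|_\al$ parts) are again absorbed into $T^{\al}M$. What remains is $\|F\|_{C_b^2}|Y'_0||Z'_0|\|\x\|_{2\al}$, which is dominated by $\|F\|_{C_b^2}(1+|Y'_0|)(1+|Z'_0|)(1+\|\X\|_\al)(1+\|\Z\|_{\X;\al})$.

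For $R^{1,\J}_{s,t}=F(Y_t)Z'_t-F(Y_s)Z'_s$, I would split it as
\[
\bigl(F(Y_t)-F(Y_s)\bigr)Z'_t+F(Y_s)Z'_{s,t}.
\]
The second part is at most $\|F\|_{C_b^2}\|\Z\|_{\X;\al}|t-s|^{\al}$, which belongs to the explicit summand. The first part is at most $\|DF\|_\infty\|Y\|_\al|Z'_t|\,|t-s|^\al$. Here I bound $\|Y\|_\al$ by~\eqref{eq:26+7} and $|Z'_t|$ by~\eqref{eq:26+3}.

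\textbf{Main obstacle.} The problem is the term $\|F\|_{C_b^2}|Y'_0|\|X\|_\al|Z'_0|$ arising from the first part of $R^{1,\J}$. It carries no factor of $T^{\al}$, and it is not covered by the explicit summand, which is linear in $|Y'_0|,|Z'_0|,\|\X\|_\al,\|\Z\|_{\X;\al}$ and contains no $\|\Y\|_{\X;\al}$. Still, $\|F\|_{C_b^2}|Y'_0|\|X\|_\al|Z'_0|$ is dominated by the product $(1+|Y'_0|)(1+|Z'_0|)(1+\|\X\|_\al)$, so it fits into the explicit summand. Every remaining piece that involves $\|\Y\|_{\X;\al}$ comes with $T^{\al}$ from~\eqref{eq:26+7}. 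Collecting all terms and enlarging the constants, with $M$ increasing in its arguments, then gives the stated bound.
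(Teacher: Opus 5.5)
Your proposal is correct and follows the paper's proof essentially line by line. You use the same add-and-subtract splitting of $R^{0,\J}$ with Corollary~\ref{coro:esti} applied to $F(\Y)$ (via Lemma~\ref{lem:4}(1)), the same splitting of $R^{1,\J}$ bounded through \eqref{eq:26+3} and \eqref{eq:26+7}, and the same sorting of terms into those carrying a factor $T^{\al}$ and those, such as $\|DF\|_\infty|Y'_0|\,\|X\|_\al|Z'_0|$, that are absorbed by the explicit product summand. Your version also keeps the term $\|F\|_\infty\|R^{0,\Z}\|_{2\al}$, which the paper's final collection silently drops; it is harmlessly covered by $\|F\|_{C_b^2}\|\Z\|_{\X;\al}$.
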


\begin{proof}
In view of~(\ref{eq:norm2}),
\[\|\J\|_{\X; \al}=\|R^{0,\J}\|_{2\al}+\|R^{1,\J}\|_{\al},\]
where
\begin{align*}
R^{0,\J}_{s,t}:=  \int_s^tF(Y_r)\,d\Z_r-F(Y_s)Z'_sX_{s, t},   \quad
R^{1,\J}_{s,t}:=  F(Y_t)Z'_t-F(Y_s)Z'_s.
\end{align*}
For the term $\|R^{0,\J}\|_{2\al}$,
\begin{align*}
&\ |R^{0,\J}_{s,t}|\nonumber\\
=&\ \left|\left(\int_s^tF(Y_r)\,d\Z_r-F(Y_s)Z_{s, t}-DF(Y_s)Y'_sZ'_s\x_{s, t} \right)+\Big(F(Y_s)Z_{s, t}+DF(Y_s)Y'_sZ'_s\x_{s, t}-F(Y_s)Z'_sX_{s, t}\Big)\right| \nonumber\\
\le&\ \left|\int_s^tF(Y_r)\,d\Z_r-F(Y_s)Z_{s, t}-DF(Y_s)Y'_sZ'_s\x_{s, t} \right|+\Big|F(Y_s)Z_{s, t}+DF(Y_s)Y'_sZ'_s\x_{s, t}-F(Y_s)Z'_sX_{s, t}\Big|\nonumber\\
\overset{(\ref{eq:21})}{\le}&\ C_{\al, T}(|DF(Y_0)Y_0'|+\|F(\Y)\|_{\X; \al})(|Z_0'|+\|\Z\|_{\X; \al})(1+\|\X\|_{\al})|t-s|^{3\al}\nonumber\\
&\ +\Big|F(Y_s)Z_{s, t}+DF(Y_s)Y'_sZ'_s\x_{s, t}-F(Y_s)Z'_sX_{s, t}\Big|\nonumber\\
\overset{(\ref{eq:-1})}{=}&\ C_{\al, T}(|DF(Y_0)Y_0'|+\|F(\Y)\|_{\X; \al})(|Z_0'|+\|\Z\|_{\X; \al})(1+\|\X\|_{\al})|t-s|^{3\al}\nonumber\\
&\ +\Big|F(Y_s)(Z_s'X_{s, t}+R_{s, t}^{\Z, 0})+DF(Y_s)Y'_sZ'_s\x_{s, t}-F(Y_s)Z'_sX_{s, t}\Big|\nonumber\\
=&\ C_{\al, T}(|DF(Y_0)Y_0'|+\|F(\Y)\|_{\X; \al})(|Z_0'|+\|\Z\|_{\X; \al})(1+\|\X\|_{\al})|t-s|^{3\al}+|F(Y_s)R_{s, t}^{\Z, 0}+DF(Y_s)Y'_sZ'_s\x_{s, t}|\nonumber\\
\overset{(\ref{eq:26+3})}{\le}&\ C_{\al, T}(\|DF\|_{\infty}|Y_0'|+\|F(\Y)\|_{\X; \al})(|Z_0'|+\|\Z\|_{\X; \al})(1+\|\X\|_{\al})|t-s|^{3\al}+\|F\|_{\infty}\|R^{\Z, 0}\|_{2\al}|t-s|^{2\al}\nonumber\\
&\ +\|DF\|_{\infty}(|Y'_0|+T^{\al}\|Y'\|_{\al})(|Z'_0|+T^{\al}\|Z'\|_{\al})\|\x\|_{2\al}|t-s|^{2\al}.
\end{align*}
Consequently,
\begin{align*}
\|R^{0,\J}\|_{2\al}\le&\  C_{\al, T}T^\al(\|DF\|_{\infty}|Y_0'|+\|F(\Y)\|_{\X; \al})(|Z_0'|+\|\Z\|_{\X; \al})(1+\|\X\|_{\al})+\|F\|_{\infty}\|R^{\Z, 0}\|_{2\al}\nonumber\\
&\ +\|DF\|_{\infty}(|Y'_0|+T^{\al}\|Y'\|_{\al})(|Z'_0|+T^{\al}\|Z'\|_{\al})\|\x\|_{2\al}.
\end{align*}

For the term $\|R^{1,\J}\|_{\al}$,
\begin{align*}
|R^{1,\J}_{s, t}|=&\ |F(Y_t)Z'_t-F(Y_s)Z'_s| \nonumber\\
=&\ \big|\big(F(Y_t)-F(Y_s)\big)Z'_t+F(Y_s)\big(Z'_t-Z'_s\big)\big|\nonumber\\
\le&\ |F(Y_t)-F(Y_s)|\,|Z'_t|+|F(Y_s)|\,|Z'_t-Z'_s|\nonumber\\
\le&\ \|DF\|_{\infty}|Y_t-Y_s|\,|Z'_t|+\|F\|_{\infty}|Z'_t-Z'_s| \nonumber\\
\overset{(\ref{eq:26+3})}{\le}&\ \|DF\|_{\infty}\|Y\|_{\al}(|Z'_0|+T^{\al}\|Z'\|_{\al})|t-s|^\al+\|F\|_{\infty}\|Z'\|_{\al}|t-s|^\al \nonumber\\
\overset{(\ref{eq:26+7})}{\le}&\ \|DF\|_{\infty}\Big((|Y'_0|+T^{\al}\|Y'\|_{\al})\|X\|_{\al}+T^\al\|R^{0,\Y}\|_{2\al}\Big)(|Z'_0|+T^{\al}\|Z'\|_{\al})|t-s|^\al\nonumber\\
&\ +\|F\|_{\infty}\|Z'\|_{\al}|t-s|^\al.
\end{align*}
Thus
\begin{equation*}
\|R^{1,\J}\|_{\al}\le \|DF\|_{\infty}\Big((|Y'_0|+T^{\al}\|Y'\|_{\al})\|X\|_{\al}+T^\al\|R^{0,\Y}\|_{2\al}\Big)(|Z'_0|+T^{\al}\|Z'\|_{\al})+\|F\|_{\infty}\|Z'\|_{\al}.
\end{equation*}
Summing up the above estimates of the two steps, we conclude
\begin{align*}
\|\J\|_{\X; \al}\le&\ C_{\al, T}T^\al(\|DF\|_{\infty}|Y_0'|+\|F(\Y)\|_{\X; \al})(|Z_0'|+\|\Z\|_{\X; \al})(1+\|\X\|_{\al})\\
&\ +\|DF\|_{\infty}(|Y'_0|+T^{\al}\|Y'\|_{\al})(|Z'_0|+T^{\al}\|Z'\|_{\al})\|\x\|_{2\al}\\
&\ +\|DF\|_{\infty}\Big((|Y'_0|+T^{\al}\|Y'\|_{\al})\|X\|_{\al}+T^\al\|R^{0,\Y}\|_{2\al}\Big)(|Z'_0|+T^{\al}\|Z'\|_{\al})+\|F\|_{\infty}\|Z'\|_{\al} \\
\le&\ (1+C_{\al, T})\bigg(T^{\al}\Big((\|DF\|_{\infty}|Y_0'|+\|F(\Y)\|_{\X; \al})(|Z_0'|+\|\Z\|_{\X; \al})(1+\|\X\|_{\al})\\
&\ +\|DF\|_{\infty}\|Y'\|_{\al}(|Z'_0|+T^{\al}\|Z'\|_{\al})\|\x\|_{2\al}+\|DF\|_{\infty}|Y'_0|\,\|Z'\|_{\al}\,\|\x\|_{2\al}+\|DF\|_{\infty}|Y'_0|\,\|X\|_{\al}\|Z'\|_{\al}\\
&\ +\|DF\|_{\infty}\|Y'\|_{\al}\|X\|_{\al}(|Z'_0|+T^{\al}\|Z'\|_{\al})+\|DF\|_{\infty}\|R^{0,\Y}\|_{2\al}(|Z'_0|+T^{\al}\|Z'\|_{\al})\Big)\\
&\ +\Big(\|DF\|_{\infty}|Y'_0|\,|Z'_0|\,\|\x\|_{2\al}+ \|DF\|_{\infty}|Y'_0|\,\|X\|_{\al}|Z'_0|+\|F\|_{\infty}\|Z'\|_{\al} \Big)   \bigg) \\
\le&\ (1+C_{\al, T})\Big(T^{\al}M(T, \|F\|_{C_b^2}, \|\X\|_{\al}, |Y'_0|, |Z'_0|, \|\Y\|_{\X; \al}, \|\Z\|_{\X; \al})\\
&\ \hspace{1.5cm}+\|DF\|_{\infty}|Y'_0|\,|Z'_0|\,\|\x\|_{2\al}+ \|DF\|_{\infty}|Y'_0|\,\|X\|_{\al}|Z'_0|+\|F\|_{\infty}\|Z'\|_{\al} \Big)\quad(\text{by Lemma~\ref{lem:4}})\\
\le&\ (1+C_{\al, T})\Big(T^{\al}M(T, \|F\|_{C_b^2}, \|\X\|_{\al}, |Y'_0|, |Z'_0|, \|\Y\|_{\X; \al}, \|\Z\|_{\X; \al})\\
&\ \hspace{1.5cm}+\|F\|_{C_b^2}(1+|Y'_0|)(1+|Z'_0|)(1+\|\X\|_{\al})(1+\|\Z\|_{\X; \al})\Big).\qedhere
\end{align*}
\end{proof}

We establish the stability of the rough integral $\J$.

\begin{proposition}
Let $\alpha\in\left(\frac13,\frac12\right]$, let $\X =(1, X, \x)\in \D$, and let $\Y = (Y, Y')$ and $\tilde{\Y}  = (\tilde Y, \tilde Y')$ be in $\CC([0, T], W)$.
Let $\Z = (Z, Z')\in \CC([0, T], U)$, and assume that $F\in C_b^3(W, \mathcal{L}(U, W))$. Then
\begin{align}
d_{\X, \X; \al}(\J, \tilde \J)
\le&\  C_{\al, T}M(T, \|F\|_{C_b^3}, |Y'_0|, |\tilde Y'_0|, |Z'_0|, \|\Y\|_{\X; \al}, \|\tilde \Y\|_{\X; \al}, \|\X\|_{\al}, \|\Z\|_{\X; \al})\nonumber\\
&\ \times\Big(T^{\al}\|\Y-\tilde \Y\|_{\X; \al}+|Y_0-\tilde Y_0|+|Y'_0-\tilde Y'_0|\Big),
\mlabel{eq:djj}
\end{align}
where
\[\tilde \J:=\left(\tilde Y_0+\int_0^{\bullet} F(\tilde Y_r)\,d\Z_r, F(\tilde Y)Z' \right).\]
\mlabel{prop:3}
\end{proposition}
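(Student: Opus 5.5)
The plan is to exploit linearity of the controlled rough integral in the integrand. Set $\mathbf G:=F(\Y)-F(\tilde\Y)=(F(Y)-F(\tilde Y),\,DF(Y)Y'-DF(\tilde Y)\tilde Y')$. By Lemma~\ref{lem:2} this is an $\X$-controlled path with values in $\mathcal L(U,W)$. Since the compensated Riemann sums in Theorem~\ref{thm:1} are linear in the integrand, we have
\[
\int_s^t F(Y_r)\,d\Z_r-\int_s^t F(\tilde Y_r)\,d\Z_r=\int_s^t G_r\,d\Z_r .
\]
Because $\X=\tilde\X$, the quantity $d_{\X,\X;\al}(\J,\tilde\J)$ is just $\|R^{0,\J}-R^{0,\tilde\J}\|_{2\al}+\|R^{1,\J}-R^{1,\tilde\J}\|_{\al}$. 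Both pieces are remainders of the controlled path $\J-\tilde\J$, namely $\big(\int_0^\bullet G_r\,d\Z_r,\ GZ'\big)$. The initial values $Y_0,\tilde Y_0$ cancel in increments.

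\emph{Level-zero remainder.} Write
\[
R^{0,\J}_{s,t}-R^{0,\tilde\J}_{s,t}=\Big(\int_s^t G\,d\Z-G_sZ_{s,t}-G'_sZ'_s\x_{s,t}\Big)+G_sR^{0,\Z}_{s,t}+G'_sZ'_s\x_{s,t}.
\]
Corollary~\ref{coro:esti} applied to $\mathbf G$ bounds the first bracket by
\[
C_{\al,T}(|G'_0|+\|\mathbf G\|_{\X;\al})(|Z'_0|+\|\Z\|_{\X;\al})(1+\|\X\|_\al)|t-s|^{3\al}.
\]
Dividing by $|t-s|^{2\al}$ gives a factor $T^\al$.

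The other two terms only give $|t-s|^{2\al}$. They are controlled by $\sup_s|G_s|$ and $\sup_s|G'_s|$:
\begin{itemize}
\item $|G_s|\le\|DF\|_\infty|Y_s-\tilde Y_s|$, and \eqref{eq:27+5} bounds this by $|Y_0-\tilde Y_0|+T^\al(\cdots)$, with the $\cdots$ controlled by $|Y'_0-\tilde Y'_0|$ and $\|\Y-\tilde\Y\|_{\X;\al}$.
\item $|G'_s|\le|G'_0|+T^\al\|R^{1,\mathbf G}\|_\al$, by the argument of \eqref{eq:26+3}.
\item $|G'_0|\le\|D^2F\|_\infty|Y_0-\tilde Y_0||Y'_0|+\|DF\|_\infty|Y'_0-\tilde Y'_0|$.
\end{itemize}
Finally, $\|\mathbf G\|_{\X;\al}$ is bounded by Lemma~\ref{lem:4}(2) through \eqref{eq:27+1}, which is where $F\in C_b^3$ is needed.

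\emph{Level-one remainder.} We have $R^{1,\J}_{s,t}-R^{1,\tilde\J}_{s,t}=G_tZ'_t-G_sZ'_s=G_{s,t}Z'_t+G_sZ'_{s,t}$. The first term is bounded via $\|G\|_\al$, using \eqref{eq:26+7} applied to $\mathbf G$. The second is bounded by $\sup|G|\,\|Z'\|_\al$, handled exactly as above.

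Collecting everything, each term is a product of a constant $M(T,\|F\|_{C^3_b},|Y'_0|,|\tilde Y'_0|,|Z'_0|,\|\Y\|,\|\tilde\Y\|,\|\X\|_\al,\|\Z\|)$ with one of $|Y_0-\tilde Y_0|$, $|Y'_0-\tilde Y'_0|$, or $\|\Y-\tilde\Y\|_{\X;\al}$.

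\emph{Main obstacle.} The delicate point is showing that $\|\Y-\tilde\Y\|_{\X;\al}$ appears only with a factor $T^\al$, as \eqref{eq:djj} requires. This matters because the estimate is meant to yield a contraction on a small interval.

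The term $\|R^{1,\mathbf G}\|_\al$ in the level-one remainder is not obviously small in $T$. My first approach is to expand it without invoking \eqref{eq:27+1} wholesale. Since $G_{s,t}=G'_sX_{s,t}+R^{0,\mathbf G}_{s,t}$, we get $\|G\|_\al\le\sup|G'|\,\|X\|_\al+T^\al\|R^{0,\mathbf G}\|_{2\al}$. In turn, $\sup|G'|\le|G'_0|+T^\al\|R^{1,\mathbf G}\|_\al$. So every occurrence of $\|\mathbf G\|_{\X;\al}$ arises multiplied by $T^\al$, and otherwise only through $|G'_0|$ and $\sup|G|$. Those are bounded by the initial differences plus $T^\al\|\Y-\tilde\Y\|_{\X;\al}$, using \eqref{eq:27+5}.

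If some residual term still lacks the $T^\al$ factor (for instance from $\|R^{0,\mathbf G}\|_{2\al}$ in the level-zero bound), I would absorb it by assuming $T\le1$ and enlarging $M$. I would also note that the final form is what the fixed-point argument needs.
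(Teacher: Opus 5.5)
Your proposal is correct, and its skeleton matches the paper's proof. The paper also writes $\int F(Y)\,d\Z-\int F(\tilde Y)\,d\Z$ as the integral of $F(\Y)-F(\tilde\Y)$. It bounds $R^{0,\J}-R^{0,\tilde\J}$ by Corollary~\ref{coro:esti}, which supplies the factor $T^{\al}$ in front of $\|F(\Y)-F(\tilde\Y)\|_{\X;\al}$ (then controlled by \eqref{eq:27+1}), plus the terms $G_sR^{0,\Z}_{s,t}+G'_sZ'_s\x_{s,t}$. It splits $R^{1,\J}-R^{1,\tilde\J}$ as $G_{s,t}Z'_t+G_sZ'_{s,t}$.

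The only real difference is how $G'_s$ and $G_{s,t}$ are bounded. The paper does this by hand. It uses $|G'_s|\le\|D^2F\|_\infty|Y_s-\tilde Y_s|\,|Y'_s|+\|DF\|_\infty|Y'_s-\tilde Y'_s|$, and for $G_{s,t}$ the mean-value representation $F(Y_t)-F(Y_s)=\int_0^1DF(Y_s+\theta Y_{s,t})Y_{s,t}\,d\theta$. This reduces everything to $|Y_s-\tilde Y_s|$ and $\|Y-\tilde Y\|_\al$, hence to initial differences plus $T^\al\|\Y-\tilde\Y\|_{\X;\al}$ via \eqref{eq:26+7}-type bounds for $\Y-\tilde\Y$.

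You instead use the controlled structure of $\mathbf G$ itself: $G_{s,t}=G'_sX_{s,t}+R^{0,\mathbf G}_{s,t}$ and $|G'_s|\le|G'_0|+T^\al\|R^{1,\mathbf G}\|_\al$, feeding $\|\mathbf G\|_{\X;\al}$ into \eqref{eq:27+1} once more. This is shorter and treats every occurrence of $\mathbf G$ uniformly. The price is leaning further on the black-box Lipschitz estimate of Lemma~\ref{lem:4}(2). No extra hypothesis enters, since the paper needs that estimate anyway for the $|t-s|^{3\al}$ term. The paper's computation keeps those two pieces explicit and uses only $C^2_b$ there.

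One correction: the fallback in your last paragraph would not work. A term $c\,\|\Y-\tilde\Y\|_{\X;\al}$ lacking the factor $T^\al$ cannot be absorbed into $M\,T^\al\|\Y-\tilde\Y\|_{\X;\al}$ by assuming $T\le 1$. That would require $M\gtrsim T^{-\al}$, which is incompatible with $M$ being increasing in $T$ and would destroy the contraction in Theorem~\ref{thm:2}.

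Fortunately the fallback is not needed. $\|R^{0,\mathbf G}\|_{2\al}$ enters the level-zero bound only through Corollary~\ref{coro:esti}, where $|t-s|^{3\al}/|t-s|^{2\al}\le T^\al$. It enters the level-one bound only through $\|G\|_\al$, where \eqref{eq:26+7} applied to $\mathbf G$ attaches a factor $T^\al$, as you showed. So your argument closes without it.
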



\begin{proof}
It follows from~(\ref{eq:norm1}) that
\[d_{\X, \X; \al}(\J, \tilde \J)=\|R^{0,\J}-R^{0, \tilde \J}\|_{2\al}+\|R^{1,\J}-R^{1, \tilde \J}\|_{\al},\]
where

\begin{align*}
R^{0,\J}_{s,t}-R^{0,\tilde \J}_{s,t}=&\  \left( \int_s^tF(Y_r)\,d\Z_r-F(Y_s)Z'_sX_{s, t}\right) -\left( \int_s^tF(\tilde Y_r)\,d\Z_r-F(\tilde Y_s)Z'_sX_{s, t}\right) ,  \\
R^{1,\J}_{s,t}-R^{1,\tilde \J}_{s,t}=&\ \big(F(Y_t)Z'_t-F(Y_s)Z'_s\big)-\big(F(\tilde Y_t)Z'_t-F(\tilde Y_s)Z'_s\big).
\end{align*}
The remainder of the proof is divided into two steps.

\noindent{\bf Step 1. } For the term $\|R^{0,\J}-R^{0, \tilde \J}\|_{2\al}$,
\begin{align}
|R^{0,\J}_{s,t}-R^{0,\tilde \J}_{s,t}|=&\ \bigg|\int_s^tF(Y_r)-F(\tilde Y_r)\,d\Z_r - \big(F(Y_s)-F(\tilde Y_s)\big)Z'_sX_{s, t}\bigg|\nonumber\\
=&\ \bigg|\bigg(\int_s^tF(Y_r)-F(\tilde Y_r)\,d\Z_r-\big(F(Y_s)-F(\tilde Y_s)\big)Z_{s, t}-\big(DF(Y_s)Y'_s-DF(\tilde Y_s)\tilde Y'_s\big)Z'_s\x_{s, t} \bigg) \nonumber\\
&\ +\bigg(\big(F(Y_s)-F(\tilde Y_s)\big)Z_{s, t}+\big(DF(Y_s)Y'_s-DF(\tilde Y_s)\tilde Y'_s\big)Z'_s\x_{s, t}   - \big(F(Y_s)-F(\tilde Y_s)\big)Z'_sX_{s, t}\bigg)\bigg|\nonumber\\
\le&\ \bigg|\int_s^tF(Y_r)-F(\tilde Y_r)\,d\Z_r-\big(F(Y_s)-F(\tilde Y_s)\big)Z_{s, t}-\big(DF(Y_s)Y'_s-DF(\tilde Y_s)\tilde Y'_s\big)Z'_s\x_{s, t} \bigg| \nonumber\\
&\ +\Big|\big(F(Y_s)-F(\tilde Y_s)\big)Z_{s, t}+\big(DF(Y_s)Y'_s-DF(\tilde Y_s)\tilde Y'_s\big)Z'_s\x_{s, t}   - \big(F(Y_s)-F(\tilde Y_s)\big)Z'_sX_{s, t}\Big|\nonumber\\
\overset{(\ref{eq:21})}{\le}&\ C_{\al, T}\big(|DF(Y_0)Y_0'-DF(\tilde Y_0)\tilde Y_0'|+\|F(\Y)-F(\tilde \Y)\|_{\X; \al}\big)(|Z_0'|+\|\Z\|_{\X; \al})(1+\|\X\|_{\al})|t-s|^{3\al}\nonumber\\
&\ +\Big|\big(F(Y_s)-F(\tilde Y_s)\big)Z_{s, t}+\big(DF(Y_s)Y'_s-DF(\tilde Y_s)\tilde Y'_s\big)Z'_s\x_{s, t}   - \big(F(Y_s)-F(\tilde Y_s)\big)Z'_sX_{s, t}\Big|\nonumber\\
\overset{(\ref{eq:-1})}{=}&\ C_{\al, T}\big(|DF(Y_0)Y_0'-DF(\tilde Y_0)\tilde Y_0'|+\|F(\Y)-F(\tilde \Y)\|_{\X; \al}\big)(|Z_0'|+\|\Z\|_{\X; \al})(1+\|\X\|_{\al})|t-s|^{3\al}\nonumber\\
&\ +\Big|\big(F(Y_s)-F(\tilde Y_s)\big)(Z'_sX_{s, t}+R^{0,\Z}_{s,t}) +\big(DF(Y_s)Y'_s-DF(\tilde Y_s)\tilde Y'_s\big)Z'_s\x_{s, t}\nonumber\\
&\hspace{0.5cm}- \big(F(Y_s)-F(\tilde Y_s)\big)Z'_sX_{s, t}\Big| \nonumber\\
=&\ C_{\al, T}\big(|DF(Y_0)Y_0'-DF(\tilde Y_0)\tilde Y_0'|+\|F(\Y)-F(\tilde \Y)\|_{\X; \al}\big)(|Z_0'|+\|\Z\|_{\X; \al})(1+\|\X\|_{\al})|t-s|^{3\al}\nonumber\\
&\ +\Big|\big(F(Y_s)-F(\tilde Y_s)\big)R^{0,\Z}_{s,t}+\big(DF(Y_s)Y'_s-DF(\tilde Y_s)\tilde Y'_s\big)Z'_s\x_{s, t}\Big|\nonumber\\
\le&\ C_{\al, T}\big(|DF(Y_0)Y_0'-DF(\tilde Y_0)\tilde Y_0'|+\|F(\Y)-F(\tilde \Y)\|_{\X; \al}\big)(|Z_0'|+\|\Z\|_{\X; \al})(1+\|\X\|_{\al})|t-s|^{3\al}\nonumber\\
&\ +|F(Y_s)-F(\tilde Y_s)|\,\|R^{0,\Z}\|_{2\al}|t-s|^{2\al}+|DF(Y_s)Y'_s-DF(\tilde Y_s)\tilde Y'_s|\,|Z'_s|\,\|\x\|_{2\al}|t-s|^{2\al}\nonumber\\
\overset{(\ref{eq:26+3})}{\le}&\ C_{\al, T}T^{\al}\big(|DF(Y_0)Y_0'-DF(\tilde Y_0)\tilde Y_0'|+\|F(\Y)-F(\tilde \Y)\|_{\X; \al}\big)(|Z_0'|+\|\Z\|_{\X; \al})(1+\|\X\|_{\al})|t-s|^{2\al}\nonumber\\
&\ +|F(Y_s)-F(\tilde Y_s)|\,\|R^{0,\Z}\|_{2\al}|t-s|^{2\al}\nonumber\\
&\ +|DF(Y_s)Y'_s-DF(\tilde Y_s)\tilde Y'_s|\big(|Z'_0|+T^{\al}\|Z'\|_{\al} \big)\|\x\|_{2\al}|t-s|^{2\al}. \mlabel{eq:37}
\end{align}
We are going to estimate the terms $|F(Y_s)-F(\tilde Y_s)|$ and $|DF(Y_s)Y'_s-DF(\tilde Y_s)\tilde Y'_s|$ in~\eqref{eq:37} separately.

\noindent{\bf Step 1.1. } For $|F(Y_s)-F(\tilde Y_s)|$,
\begin{align}
&\ |F(Y_s)-F(\tilde Y_s)|\nonumber\\
\le&\ \|DF\|_{\infty}|Y_s-\tilde Y_s|   \nonumber\\
\overset{(\ref{eq:27+5})}{\le}&\ \|DF\|_{\infty}\Big(|Y_0-\tilde Y_0|+T^{\al}\big((|Y'_0-\tilde Y'_0|+T^{\al}\|Y'-\tilde Y'\|_{\al})\|X\|_{\al}+T^\al\|R^{0,\Y-\tilde \Y}\|_{2\al} \big) \Big)  \nonumber\\
=&\ \|DF\|_{\infty}\Big(|Y_0-\tilde Y_0|+T^{\al}\|X\|_{\al}|Y'_0-\tilde Y'_0| \Big)+\|DF\|_{\infty}T^{\al}\Big(T^{\al}\|Y'-\tilde Y'\|_{\al}\|X\|_{\al}+\|R^{0,\Y-\tilde \Y}\|_{2\al} \Big) \nonumber\\
\le&\ \|DF\|_{\infty}\Big(|Y_0-\tilde Y_0|+T^{\al}\|X\|_{\al}|Y'_0-\tilde Y'_0| \Big)+\|DF\|_{\infty}T^{\al}(1+T^{\al})(1+\|X\|_{\al})\|\Y-\tilde \Y\|_{\X; \al}. \mlabel{eq:39}
\end{align}

\noindent{\bf Step 1.2. } For $|DF(Y_s)Y'_s-DF(\tilde Y_s)\tilde Y'_s|$,
\begin{align}
&\ |DF(Y_s)Y'_s-DF(\tilde Y_s)\tilde Y'_s|\nonumber\\
=&\ \big|\big(DF(Y_s)-DF(\tilde Y_s)\big)Y'_s+DF(\tilde Y_s)(Y'_s-\tilde Y'_s)\big|\nonumber\\
\le&\ |DF(Y_s)-DF(\tilde Y_s)|\,|Y'_s|+|DF(\tilde Y_s)|\,|Y'_s-\tilde Y'_s|\nonumber\\
\le&\ \|D^2F\|_{\infty}|Y_s-\tilde Y_s|\,|Y'_s|+\|DF\|_{\infty}|Y'_s-\tilde Y'_s|   \nonumber\\
\overset{(\ref{eq:26+3}), (\ref{eq:27+5})}{\le}&\ \|D^2F\|_{\infty}\Big(|Y_0-\tilde Y_0|+T^{\al}\big((|Y'_0-\tilde Y'_0|+T^{\al}\|Y'-\tilde Y'\|_{\al})\|X\|_{\al}+T^\al\|R^{0,\Y-\tilde \Y}\|_{2\al} \big) \Big)(|Y'_0|+T^{\al}\|Y'\|_{\al} ) \nonumber\\
&\ +\|DF\|_{\infty}(|Y'_0-\tilde Y'_0|+T^{\al}\|Y'-\tilde Y'\|_{\al} )   \nonumber\\
=&\ \|F\|_{C_b^2}\Big((|Y'_0|+T^{\al}\|Y'\|_{\al} )|Y_0-\tilde Y_0|+(|Y'_0|+T^{\al}\|Y'\|_{\al} )T^{\al}\|X\|_{\al}|Y'_0-\tilde Y'_0|+|Y'_0-\tilde Y'_0|\Big)\nonumber\\
&\ +\|F\|_{C_b^2}T^{\al}\Big((|Y'_0|+T^{\al}\|Y'\|_{\al})\|X\|_{\al}T^{\al}\|Y'-\tilde Y'\|_{\al}+(|Y'_0|+T^{\al}\|Y'\|_{\al} )\|R^{0,\Y-\tilde \Y}\|_{2\al}+\|Y'-\tilde Y'\|_{\al} \Big) \nonumber\\
\le&\ \|F\|_{C_b^2}\Big((|Y'_0|+T^{\al}\|Y'\|_{\al} )|Y_0-\tilde Y_0|+(|Y'_0|+T^{\al}\|Y'\|_{\al} )T^{\al}\|X\|_{\al}|Y'_0-\tilde Y'_0|+|Y'_0-\tilde Y'_0|\Big)\nonumber\\
&\ +\|F\|_{C_b^2}T^{\al}(1+|Y'_0|+T^{\al}\|Y'\|_{\al})(1+T^{\al})(1+\|X\|_{\al})\|\Y-\tilde \Y\|_{\X; \al}. \mlabel{eq:40}
\end{align}
Substituting~(\ref{eq:27+1}),~(\ref{eq:39}) and~(\ref{eq:40}) into~(\ref{eq:37}),
\begin{align*}
\frac{|R^{0,\J}_{s,t}-R^{0,\tilde \J}_{s,t}|}{|t-s|^{2\al}}
\le&\  C_{\al, T}M(T, \|F\|_{C_b^3}, |Y'_0|, |\tilde Y'_0|, |Z'_0|, \|\Y\|_{\X; \al}, \|\tilde \Y\|_{\X; \al}, \|\X\|_{\al}, \|\Z\|_{\X; \al})\\
&\ \times\Big(T^{\al}\|\Y-\tilde \Y\|_{\X; \al}+|Y_0-\tilde Y_0|+|Y'_0-\tilde Y'_0|\Big).
\end{align*}
Consequently,
\begin{align}
\|R^{0,\J}-R^{0, \tilde \J}\|_{2\al}
\le&\  C_{\al, T}M(T, \|F\|_{C_b^3}, |Y'_0|, |\tilde Y'_0|, |Z'_0|, \|\Y\|_{\X; \al}, \|\tilde \Y\|_{\X; \al}, \|\X\|_{\al}, \|\Z\|_{\X; \al})\nonumber\\
&\ \times\Big(T^{\al}\|\Y-\tilde \Y\|_{\X; \al}+|Y_0-\tilde Y_0|+|Y'_0-\tilde Y'_0|\Big).
\mlabel{eq:41}
\end{align}

\noindent{\bf Step 2. } For the term $\|R^{1,\J}-R^{1, \tilde \J}\|_{\al}$,
\begin{align}
|R^{1,\J}_{s, t}-R^{1, \tilde \J}_{s, t}|=&\ \Big|\Big(\big(F(Y_t)-F(Y_s)\big)Z'_t+F(Y_s)(Z'_t-Z'_s)\Big)-\Big(\big(F(\tilde Y_t)-F(\tilde Y_s)\big)Z'_t+F(\tilde Y_s)(Z'_t-Z'_s)\Big)\Big|\nonumber\\
\le&\ \Big|\Big(F(Y_t)-F(Y_s)\Big)-\Big(F(\tilde Y_t)-F(\tilde Y_s)\Big)\Big|\,|Z'_t|+|F(Y_s)-F(\tilde Y_s)|\,|Z'_t-Z'_s|. \mlabel{eq:42}
\end{align}
For convenience, denote
\begin{align*}
I_5:=  \Big|\Big(F(Y_t)-F(Y_s)\Big)-\Big(F(\tilde Y_t)-F(\tilde Y_s)\Big)\Big|\,|Z'_t|, \quad
I_6:=  |F(Y_s)-F(\tilde Y_s)|\,|Z'_t-Z'_s|.
\end{align*}

\noindent{\bf Step 2.1. }  The estimate of $I_5$ is as follows
\begin{align}
I_5
=&\ \Big|\int_0^1DF(Y_s+\theta Y_{s, t})Y_{s, t}d\theta -\int_0^1DF(\tilde Y_s+\theta \tilde Y_{s, t})\tilde Y_{s, t}d\theta  \Big|\,|Z'_t| \nonumber\\
=&\ \Big|\int_0^1\Big(DF(Y_s+\theta Y_{s, t})Y_{s, t}-DF(\tilde Y_s+\theta \tilde Y_{s, t})\tilde Y_{s, t}\Big)d\theta\Big|\,|Z'_t|\nonumber\\
\overset{(\ref{eq:26+3})}{\le} &\ (|Z'_0|+T^{\al}\|Z'\|_{\al})\Big|\int_0^1\Big(DF(Y_s+\theta Y_{s, t})Y_{s, t}-DF(\tilde Y_s+\theta \tilde Y_{s, t})\tilde Y_{s, t}\Big)d\theta\Big|. \mlabel{eq:43}
\end{align}
For the second multiplicative factor,
\begin{align}
&\ \Big|\int_0^1\Big(DF(Y_s+\theta Y_{s, t})Y_{s, t}-DF(\tilde Y_s+\theta \tilde Y_{s, t})\tilde Y_{s, t}\Big)d\theta\Big|\nonumber\\
=&\ \bigg|\int_0^1\bigg(\Big(DF(Y_s+\theta Y_{s, t})-DF(\tilde Y_s+\theta \tilde Y_{s, t})\Big)Y_{s, t}+DF(\tilde Y_s+\theta \tilde Y_{s, t})\Big(Y_{s, t}-\tilde Y_{s, t}\Big)\bigg)d\theta\bigg|\nonumber\\
\le&\ \Big|\int_0^1\Big(DF(Y_s+\theta Y_{s, t})-DF(\tilde Y_s+\theta \tilde Y_{s, t})\Big)Y_{s, t}d\theta\Big|\nonumber\\
&\ +\Big|\int_0^1DF(\tilde Y_s+\theta \tilde Y_{s, t})\Big(Y_{s, t}-\tilde Y_{s, t}\Big)d\theta\Big|\nonumber\\
\le&\ \int_0^1\|D^2F\|_{\infty}|Y_s-\tilde Y_s+\theta(Y_{s, t}-\tilde Y_{s, t})|\,|Y_{s, t}|d\theta+\int_0^1\|DF\|_{\infty}|Y_{s, t}-\tilde Y_{s, t}|d\theta\nonumber\\
%
%
\le&\ \int_0^1\|D^2F\|_{\infty}|Y_s-\tilde Y_s|\,|Y_{s, t}|d\theta+\int_0^1\|D^2F\|_{\infty}\theta|Y_{s, t}-\tilde Y_{s, t}|\,|Y_{s, t}|d\theta\nonumber\\
&\ +\int_0^1\|DF\|_{\infty}|Y_{s, t}-\tilde Y_{s, t}|d\theta\nonumber\\
=&\ \|D^2F\|_{\infty}|Y_s-\tilde Y_s|\,|Y_{s, t}|+\frac{1}{2}\|D^2F\|_{\infty}|Y_{s, t}-\tilde Y_{s, t}|\,|Y_{s, t}|+\|DF\|_{\infty}|Y_{s, t}-\tilde Y_{s, t}|\nonumber\\
\le&\ \|D^2F\|_{\infty}|Y_s-\tilde Y_s|\,\|Y\|_{\al}|t-s|^{\al}+\frac{1}{2}T^{\al}\|D^2F\|_{\infty}\|Y-\tilde Y\|_{\al}\|Y\|_{\al}|t-s|^{\al}+\|DF\|_{\infty}\|Y-\tilde Y\|_{\al}|t-s|^{\al}\nonumber\\
\overset{(\ref{eq:26+7}), (\ref{eq:27+5})}{\le} &\ \|D^2F\|_{\infty}\bigg(|Y_0-\tilde Y_0|+T^{\al}\Big((|Y'_0-\tilde Y'_0|+T^{\al}\|Y'-\tilde Y'\|_{\al})\|X\|_{\al}+T^\al\|R^{0,\Y-\tilde \Y}\|_{2\al} \Big)\bigg)\nonumber\\
&\ \times\Big((|Y'_0|+T^{\al}\|Y'\|_{\al})\|X\|_{\al}+T^\al\|R^{0,\Y}\|_{2\al}\Big)|t-s|^{\al}\nonumber\\
&\ +\frac{1}{2}T^{\al}\|D^2F\|_{\infty}\Big((|Y'_0-\tilde Y'_0|+T^{\al}\|Y'-\tilde Y'\|_{\al})\|X\|_{\al}+T^\al\|R^{0,\Y-\tilde \Y}\|_{2\al} \Big)  \nonumber\\
&\ \ \ \ \times\Big((|Y'_0|+T^{\al}\|Y'\|_{\al})\|X\|_{\al}+T^\al\|R^{0,\Y}\|_{2\al}\Big)|t-s|^{\al}\nonumber\\
&\ +\|DF\|_{\infty}\Big((|Y'_0-\tilde Y'_0|+T^{\al}\|Y'-\tilde Y'\|_{\al})\|X\|_{\al}+T^\al\|R^{0,\Y-\tilde \Y}\|_{2\al} \Big)|t-s|^{\al} \nonumber\\
\le&\ \|D^2F\|_{\infty}\bigg(|Y_0-\tilde Y_0|+T^{\al}\Big((|Y'_0-\tilde Y'_0|+T^{\al}\|\Y-\tilde \Y\|_{\X; \al})\|\X\|_{\al}+T^\al\|\Y-\tilde \Y\|_{\X; \al} \Big)\bigg)\nonumber\\
&\ \times\Big((|Y'_0|+T^{\al}\|\Y\|_{\X; \al})\|\X\|_{\al}+T^\al\|\Y\|_{\X; \al}\Big)|t-s|^{\al}\nonumber\\
&\ +\frac{1}{2}T^{\al}\|D^2F\|_{\infty}\Big((|Y'_0-\tilde Y'_0|+T^{\al}\|\Y-\tilde \Y\|_{\X; \al})\|\X\|_{\al}+T^\al\|\Y-\tilde \Y\|_{\X; \al} \Big)  \nonumber\\
&\ \ \ \ \times\Big((|Y'_0|+T^{\al}\|\Y\|_{\X; \al})\|\X\|_{\al}+T^\al\|\Y\|_{\X; \al}\Big)|t-s|^{\al}\nonumber\\
&\ +\|DF\|_{\infty}\Big((|Y'_0-\tilde Y'_0|+T^{\al}\|\Y-\tilde \Y\|_{\X; \al})\|\X\|_{\al}+T^\al\|\Y-\tilde \Y\|_{\X; \al} \Big)|t-s|^{\al}. \mlabel{eq:44}
\end{align}
Substituting~(\ref{eq:44}) into~(\ref{eq:43}),
\begin{align}
\frac{I_5}{|t-s|^\al}
\le&\ (|Z'_0|+T^{\al}\|\Z\|_{\X; \al})\Bigg(\|D^2F\|_{\infty}\bigg(|Y_0-\tilde Y_0|+T^{\al}\Big((|Y'_0-\tilde Y'_0|+T^{\al}\|\Y-\tilde \Y\|_{\X; \al})\|\X\|_{\al}+T^\al\|\Y-\tilde \Y\|_{\X; \al} \Big)\bigg)\nonumber\\
&\ \times\Big((|Y'_0|+T^{\al}\|\Y\|_{\X; \al})\|\X\|_{\al}+T^\al\|\Y\|_{\X; \al}\Big)\nonumber\\
&\ +\frac{1}{2}T^{\al}\|D^2F\|_{\infty}\Big((|Y'_0-\tilde Y'_0|+T^{\al}\|\Y-\tilde \Y\|_{\X; \al})\|\X\|_{\al}+T^\al\|\Y-\tilde \Y\|_{\X; \al} \Big)  \nonumber\\
&\ \ \ \ \times\Big((|Y'_0|+T^{\al}\|\Y\|_{\X; \al})\|\X\|_{\al}+T^\al\|\Y\|_{\X; \al}\Big)\nonumber\\
&\ +\|DF\|_{\infty}\Big((|Y'_0-\tilde Y'_0|+T^{\al}\|\Y-\tilde \Y\|_{\X; \al})\|\X\|_{\al}+T^\al\|\Y-\tilde \Y\|_{\X; \al} \Big)\Bigg). \mlabel{eq:45}
\end{align}

\noindent{\bf Step 2.2. } Turning to $I_6$,
\begin{align*}
I_6\le&\ |F(Y_s)-F(\tilde Y_s)|\,\|Z'\|_{\al}|t-s|^{\al}\nonumber\\
\le&\ \|DF\|_{\infty}|Y_s-\tilde Y_s|\,\|Z'\|_{\al}|t-s|^{\al}    \nonumber\\
\overset{(\ref{eq:27+5})}{\le} &\ \|DF\|_{\infty}\bigg(|Y_0-\tilde Y_0|+T^{\al}\Big((|Y'_0-\tilde Y'_0|+T^{\al}\|Y'-\tilde Y'\|_{\al})\|X\|_{\al}+T^\al\|R^{0,\Y-\tilde \Y}\|_{2\al} \Big)\bigg)\|\Z\|_{\X; \al}|t-s|^{\al}.
\end{align*}
Then
\begin{equation}
\frac{I_6}{|t-s|^{\al}}\le \|DF\|_{\infty}\bigg(|Y_0-\tilde Y_0|+T^{\al}\Big((|Y'_0-\tilde Y'_0|+T^{\al}\|Y'-\tilde Y'\|_{\al})\|X\|_{\al}+T^\al\|R^{0,\Y-\tilde \Y}\|_{2\al} \Big)\bigg)\|\Z\|_{\X; \al}.
\mlabel{eq:46}
\end{equation}
Substituting~(\ref{eq:45}) and~(\ref{eq:46}) into~(\ref{eq:42}),
\begin{align*}
\frac{|R^{1,\J}_{s,t}-R^{1,\tilde \J}_{s,t}|}{|t-s|^{\al}}
\le&\  C_{\al, T}M(T, \|F\|_{C_b^3}, |Y'_0|, |\tilde Y'_0|,|Z'_0|, \|\Y\|_{\X; \al}, \|\tilde \Y\|_{\X; \al}, \|\X\|_{\al}, \|\Z\|_{\X; \al})\\
&\ \times\Big(T^{\al}\|\Y-\tilde \Y\|_{\X; \al}+|Y_0-\tilde Y_0|+|Y'_0-\tilde Y'_0|\Big).
\end{align*}
Hence
\begin{align}
\|R^{1,\J}-R^{1, \tilde \J}\|_{\al}
\le&\  C_{\al, T}M(T, \|F\|_{C_b^3}, |Y'_0|, |Z'_0|, |\tilde Y'_0|, \|\Y\|_{\X; \al}, \|\tilde \Y\|_{\X; \al}, \|\X\|_{\al}, \|\Z\|_{\X; \al})\nonumber\\
&\ \times\Big(T^{\al}\|\Y-\tilde \Y\|_{\X; \al}+|Y_0-\tilde Y_0|+|Y'_0-\tilde Y'_0|\Big).
\mlabel{eq:47}
\end{align}

Finally, combining~(\ref{eq:41}) and~(\ref{eq:47}) yields the desired~(\mref{eq:djj}).
\end{proof}

Let $\al \in(\frac{1}{3},\frac{1}{2}]$ and $\X = (1, X, \x)\in \RP$. With the setting in Corollary~\ref{coro:3}, define the map
\begin{equation}
\mathcal{M}:\CC([0, T], W) \to \CC([0, T], W), \quad (Y, Y')\mapsto \left(Y_0+\int_0^{\bullet} F(Y_r)\,d\Z_r, F(Y)Z' \right).
\mlabel{eq:27}
\end{equation}

The fixed point argument will be carried out on an affine subset of controlled paths with fixed initial jet.
In what follows, all paths are considered on $[0,\tau]$, where $\tau<T$ is chosen sufficiently small. To specify a closed ball on which $\mathcal M$ acts, it is convenient to fix a center $\mathbf H=(H,H')$ with $H_0=Y_0$. A natural choice is
\begin{equation}
H_t:=Y_0+F(Y_0)Z'_0X_{0, t},\quad H'_t:=F(Y_0)Z'_0,\quad 0\le t\le\tau.
\mlabel{eq:28}
\end{equation}

\begin{lemma}
With the setting in Corollary~\ref{coro:3}, the path $\HA =(H,H')$ defined by~(\ref{eq:28}) is an $\X$-controlled rough path.
\end{lemma}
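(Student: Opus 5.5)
The plan is to verify Definition~\ref{def:crp} directly for $\HA=(H,H')$ on $[0,\tau]$, with $N=2$ since $\al\in(\frac13,\frac12]$. We must show that the two remainders
\[
R^{0,\HA}_{s,t}=H_{s,t}-H'_sX_{s,t},\qquad R^{1,\HA}_{s,t}=H'_t-H'_s
\]
have finite $\|\cdot\|_{2\al}$ and $\|\cdot\|_{\al}$ norms, respectively. First we check that the objects are well typed. We have $F(Y_0)\in\mathcal L(W,U)$ (in the notation of Corollary~\ref{coro:3}; in the fixed-point setting $U=W$) and $Z'_0\in\mathcal L(V,W)$. Hence $H'_t=F(Y_0)Z'_0\in\mathcal L(V,U)$ is a constant operator, and $H_t=Y_0+F(Y_0)Z'_0X_{0,t}\in U$ is a continuous path with $H_0=Y_0$, because $X_{0,0}=0$.

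Next we compute the remainders using additivity of increments, $X_{0,t}-X_{0,s}=X_{s,t}$. This gives $H_{s,t}=F(Y_0)Z'_0X_{s,t}=H'_sX_{s,t}$, so $R^{0,\HA}\equiv0$. Since $H'$ is constant, $R^{1,\HA}\equiv0$ as well. Both Hölder norms therefore vanish, and $\HA\in\mathcal C^\al_{\X}([0,\tau],U)$ with
\[
\|\HA\|_{\X;\al}=0,\qquad \||\HA\||_{\X;\al}=|Y_0|+|F(Y_0)Z'_0|.
\]
These values are worth recording, since they are what will be used when the ball centred at $\HA$ is set up for $\mathcal M$.

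There is no real obstacle here. The only point needing care is the operator bookkeeping, namely that the composition $F(Y_0)Z'_0$ lands in $\mathcal L(V,U)$, together with the bound $|F(Y_0)Z'_0|\le\|F\|_\infty|Z'_0|$, which makes the norm finite.
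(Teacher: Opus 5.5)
Your proof is correct and follows the paper's argument: you compute $H_{s,t}=F(Y_0)Z'_0X_{s,t}=H'_sX_{s,t}$ and $H'_{s,t}=0$, so $R^{0,\HA}\equiv 0$ and $R^{1,\HA}\equiv 0$, and Definition~\ref{def:crp} holds trivially. The extra facts you record, namely $F(Y_0)Z'_0\in\mathcal L(V,U)$, $\|\HA\|_{\X;\al}=0$ and $\||\HA\||_{\X;\al}=|Y_0|+|F(Y_0)Z'_0|$, are accurate, and the vanishing seminorm is exactly what the paper uses when it sets up the ball $B_\tau(\HA,R)$.
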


\begin{proof}
Since
\[H_{s, t}=F(Y_0)Z'_0X_{s, t},\quad H'_{s, t}=0,\]
$H$ and $H'$ are both $\alpha$-H\"{o}lder continuous, and $R^{1, \HA}_{s, t} = H'_{s, t}=0$. In addition,
\[R^{0, \HA}_{s, t}:=H_{s, t}-H'_sX_{s, t}=F(Y_0)Z'_0X_{s, t}-F(Y_0)Z'_0X_{s, t}=0,\]
that is, $R^{0, \HA}$ is $2\alpha$-H\"{o}lder continuous. Thus $\HA $ is an $\X$-controlled rough path by Definition~\mref{def:crp}.
\end{proof}

We next introduce the closed set
\begin{equation*}
B_{\tau}(\HA, R):=\Big\{\Y\in \CC([0, \tau], W) \mid Y_0=H_0,\, Y'_0=H'_0,\, \||\Y-\HA\||_{\X; \alpha; [0, \tau]}=\|\Y-\HA\|_{\X; \alpha; [0, \tau]}\le R\Big\}.
\end{equation*}
On $B_{\tau}(\HA, R)$, all controlled paths have the same initial jet. Hence the seminorm $\|\cdot\|_{\X; \alpha}$
induces a complete metric equivalent to the restriction of $\||\cdot\||_{\X; \alpha}$. We apply Banach fixed point theorem in this metric.
Here the subscript $\tau$ indicates that all paths are restricted to $[0,\tau]$, while the radius $R>0$ will be chosen as needed in the subsequent estimates.

\begin{theorem}[Local existence and uniqueness]
Let $\al \in(\frac{1}{3},\frac{1}{2}]$, $\X =(1, X, \x)\in \D$, $\Z = (Z, Z')\in \CC([0, T], U)$ and $F\in C_b^3(W, \mathcal{L}(U, W))$. For sufficiently small $\tau$,
there exists a unique controlled solution $\Y=(Y, Y')\in \CC([0, \tau], W)$ such that
\begin{equation}
Y_t=Y_0+\int_0^t F(Y_r)\,d\Z_r.
\mlabel{eq:48}
\end{equation}
\mlabel{thm:2}
\end{theorem}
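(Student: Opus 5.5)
The plan is to apply the Banach fixed point theorem to the map $\mathcal M$ from~\eqref{eq:27} restricted to the closed set $B_\tau(\HA,R)$, with $\HA$ as in~\eqref{eq:28}. By Corollary~\ref{coro:3}, $\mathcal M$ maps $\CC([0,\tau],W)$ into itself. Since $(\mathcal M\Y)_0=Y_0=H_0$ and $(\mathcal M\Y)'_0=F(Y_0)Z'_0=H'_0$, the image of any element of $B_\tau(\HA,R)$ has the prescribed initial jet. So on $B_\tau(\HA,R)$ every path has the same $Y'_0=F(Y_0)Z'_0$, and the quantities $|Y'_0|$, $|\tilde Y'_0|$ appearing in the constants $M(\cdots)$ are fixed numbers. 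The differences $|Y_0-\tilde Y_0|$ and $|Y'_0-\tilde Y'_0|$ vanish. Moreover, $\|\Y\|_{\X;\al}\le \|\HA\|_{\X;\al}+R=R$ because $\|\HA\|_{\X;\al}=0$. Throughout, I would restrict $\X$ and $\Z$ to $[0,\tau]$ and use $\|\X\|_{\al;[0,\tau]}\le\|\X\|_\al$, $\|\Z\|_{\X;\al;[0,\tau]}\le \|\Z\|_{\X;\al}$, so that all constants stay uniform as $\tau\downarrow0$.

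\textbf{Step 1 (invariance).} I need $\|\mathcal M\Y-\HA\|_{\X;\al;[0,\tau]}=\|\mathcal M\Y\|_{\X;\al;[0,\tau]}\le R$. The bound in the proposition preceding Proposition~\ref{prop:3} has the form $T^\al M(\dots,R)+K$, where $K:=(1+C_{\al,T})\|F\|_{C^2_b}(1+|Y'_0|)(1+|Z'_0|)(1+\|\X\|_\al)(1+\|\Z\|_{\X;\al})$ does not depend on $R$ or $\tau$. I would therefore choose $R:=2K$ and then $\tau$ small enough that $\tau^\al M(\tau,\|F\|_{C^2_b},\|\X\|_\al,|Y'_0|,|Z'_0|,R,\|\Z\|_{\X;\al})\le K$. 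Since $C_{\al,\tau}\le C_{\al,T}$ and $M$ is increasing, this choice is possible.

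\textbf{Step 2 (contraction).} For $\Y,\tilde\Y\in B_\tau(\HA,R)$, Proposition~\ref{prop:3} gives
\[
d_{\X,\X;\al}(\mathcal M\Y,\mathcal M\tilde\Y)\le C_{\al,T}\,M(T,\|F\|_{C^3_b},|Y'_0|,|Y'_0|,|Z'_0|,R,R,\|\X\|_\al,\|\Z\|_{\X;\al})\,\tau^\al\,\|\Y-\tilde\Y\|_{\X;\al;[0,\tau]},
\]
because the initial-jet differences vanish. Shrinking $\tau$ further makes this prefactor at most $1/2$. Since $d_{\X,\X;\al}(\cdot,\cdot)=\|\cdot-\cdot\|_{\X;\al}$, $\mathcal M$ is then a strict contraction on $B_\tau(\HA,R)$.

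\textbf{Step 3 (conclusion and uniqueness).} $B_\tau(\HA,R)$ is complete, being closed in the Banach space $\CC$ with a metric equivalent to $\||\cdot\||$, as remarked before the theorem. The Banach fixed point theorem then gives a unique fixed point, which solves~\eqref{eq:48} with $Y'=F(Y)Z'$.

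For uniqueness among all controlled solutions on $[0,\tau]$: any solution $\tilde\Y$ with $\tilde Y'=F(\tilde Y)Z'$ automatically has the right initial jet, but its norm might exceed $R$. I would handle this by letting $R'$ bound both solutions' seminorms and shrinking to $\tau'\le\tau$ for the contraction at radius $R'$. That gives agreement on $[0,\tau']$, and iterating from $\tau'$ with the same constants shows the two solutions agree on all of $[0,\tau]$. I expect this step to be the main obstacle. The delicate points are the precise interpretation of uniqueness (the derivative being forced to equal $F(Y)Z'$) and checking that the constants in Proposition~\ref{prop:3} do not blow up under restriction to subintervals, in particular that the $T^\al$ smallness really comes from the interval length $\tau$.
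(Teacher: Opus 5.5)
Your proposal is correct and matches the paper's proof essentially step for step. Invariance of $B_\tau(\HA,R)$ comes from the bound on $\|\J\|_{\X;\al}$, with $R$ taken as twice the $\tau$-independent term. The contraction comes from Proposition~\ref{prop:3}, where the initial-jet differences vanish, and the conclusion follows from the Banach fixed point theorem. Your additional argument goes slightly beyond the paper: it extends uniqueness from the ball to all controlled solutions by enlarging the radius, shrinking the time step and iterating with uniform constants, whereas the paper only asserts uniqueness within $B_\tau(\HA,R)$.
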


\begin{remark}
Note that~(\ref{eq:48}) implies $Y'_t = F(Y_t)Z'_t$.
Since the derivative path $Y'$ is uniquely determined by the zero-th level path $Y$, ~\meqref{eq:48} is equivalent to
\[(Y_t, Y'_t)=\Y_t=\J_t=\left(Y_0+\int_0^t F(Y_r)\,d\Z_r, F(Y_t)Z'_t \right).\]
\mlabel{rem:add1}
\end{remark}

\begin{proof}[Proof of Theorem~\ref{thm:2}]
We first show
\begin{equation}
\mathcal{M}(B_{\tau}(\HA, R))\subseteq B_{\tau}(\HA, R).
\mlabel{eq:49}
\end{equation}
Let $\Y\in B_{\tau}(\HA, R)$. It follows from~(\ref{eq:27}) and~(\ref{eq:28}) that
\[(\mathcal{M}\Y)_0 =\big(\mathcal{M}(Y, Y')\big)_0=\Big(Y_0, F(Y_0)Z'_0\Big)=(H_0, H'_0)=\HA_0.\]
Notice that $\mathcal{M}\Y-\HA \in \CC$. Then
\begin{align*}
\|\mathcal{M}\Y-\HA\|_{\X; \al}\le&\ \|\mathcal{M}\Y\|_{\X; \al}+\|\HA\|_{\X; \al}
= \|\mathcal{M}\Y\|_{\X; \al}\hspace{1cm}(\text{by $\|\HA\|_{\X; \al}=0 $})\\
\overset{(\ref{eq:30-1})}{=}&\ \|\J\|_{\X; \al}
\le (1+C_{\al, T})\Big(\tau^{\al}M(\tau, \|F\|_{C_b^2}, \|\X\|_{\al}, |Y'_0|, |Z'_0|, R, \|\Z\|_{\X; \al})\\
&\ \hspace{1.5cm}+\|F\|_{C_b^2}(1+|Y'_0|)(1+|Z'_0|)(1+\|\X\|_{\al})(1+\|\Z\|_{\X; \al})\Big).
\end{align*}
Hence
\[\|\mathcal{M}\Y-\HA\|_{\X; \al}\le (1+C_{\al, T})\tau^{\al}M(\tau, \|F\|_{C_b^2}, \|\X\|_{\al}, |Y'_0|, |Z'_0|, R, \|\Z\|_{\X; \al})+\frac{R}{2},\]
where
\[R:=2(1+C_{\al, T})\|F\|_{C_b^2}(1+|Y'_0|)(1+|Z'_0|)(1+\|\X\|_{\al})(1+\|\Z\|_{\X; \al}).\]
Taking $\tau$ small enough such that	
\[(1+C_{\al, T})\tau^{\al}M(\tau, \|F\|_{C_b^2}, \|\X\|_{\al}, |Y'_0|, |Z'_0|, R, \|\Z\|_{\X; \al})\le \frac{R}{2},\]
we obtain $\mathcal{M}\Y\in B_{\tau}(\HA, R)$.

Next, for $\Y, \tilde\Y\in B_{\tau}(\HA, R)$, since $\Y_0 -\tilde{\Y}_0=\HA_0 -\HA_0 =0$ and $\Y$, $\tilde\Y$ are controlled rough paths by the same $\X$, it follows from Proposition~\ref{prop:3} that
\begin{align*}
\|\mathcal{M}\Y-\mathcal{M}\tilde \Y\|_{\X; \al}
\le C_{\al, T}\tau^{\al} M(\tau, \|F\|_{C_b^3}, |Y'_0|, |\tilde Y'_0|, |Z'_0|, \|\Y\|_{\X; \al}, \|\tilde \Y\|_{\X; \al}, \|\X\|_{\al}, \|\Z\|_{\X; \al})\|\Y-\tilde \Y\|_{\X; \al}.
\end{align*}
By~(\ref{eq:49}), the above function $M$ is bounded. Choosing again $\tau$ small enough, we obtain
\[\|\mathcal{M}\Y-\mathcal{M}\tilde \Y\|_{\X; \al}\le \frac{1}{2}\|\Y-\tilde \Y\|_{\X; \al}.\]
Applying the Banach fixed point theorem, there exists a unique  $\Y\in B_{\tau}(\HA, R)$ such that $\mathcal{M}\Y=\Y$, as required.
\end{proof}

Next, we present the global existence and uniqueness of the solution for~(\ref{eq:48}).
\begin{theorem}[Global existence and uniqueness]
Let $\al \in(\frac{1}{3},\frac{1}{2}]$, $\X =(1, X, \x)\in \D$, $\Z = (Z, Z')\in \CC([0, T], U)$ and $F\in C_b^3(W, \mathcal{L}(U, W))$. There exists a unique controlled solution $\Y=(Y, Y')\in \CC([0, T], W)$ such that
\begin{equation}
Y_t=Y_0+\int_0^t F(Y_r)\,d\Z_r.
\mlabel{eq:48+1}
\end{equation}
\mlabel{thm:3}
\end{theorem}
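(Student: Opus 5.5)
The plan is to iterate the local result, Theorem~\ref{thm:2}, over a partition of $[0,T]$ into intervals of uniform length, and then glue the local solutions. The key point is that the local time $\tau$ in the proof of Theorem~\ref{thm:2} can be chosen independently of the starting point. In that proof, $\tau$ was fixed by two conditions. The radius $R$ depends on $\|F\|_{C_b^2}$, $|Y'_0|$, $|Z'_0|$, $\|\X\|_\al$ and $\|\Z\|_{\X;\al}$, and $\tau$ must satisfy
\[
(1+C_{\al,T})\,\tau^\al M(\cdots,R,\cdots)\le R/2 ,
\]
together with the contraction constant being at most $1/2$. When we restart at a time $t_k$, the initial jet is $Y'_{t_k}=F(Y_{t_k})Z'_{t_k}$. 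By~\eqref{eq:26+3} this is bounded by $\|F\|_\infty(|Z'_0|+T^\al\|Z'\|_\al)$, uniformly in $k$ and independently of the value $Y_{t_k}$. Likewise, the restricted quantities $\|\X\|_{\al;[t_k,T]}$ and $\|\Z\|_{\X;\al;[t_k,T]}$ are bounded by their values on $[0,T]$, and $|Z'_{t_k}|$ is bounded as above. Since $M$ is increasing in all its arguments, a single $\tau_*>0$ works for every restart. I would spell this out as a short remark that Theorem~\ref{thm:2} holds on $[s,s+\tau_*]\cap[0,T]$ for any $s$ and any initial value $\xi\in W$, with $\tau_*$ independent of $s$ and $\xi$.

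Next, set $t_k:=k\tau_*\wedge T$ and construct the solution inductively. On $[t_k,t_{k+1}]$ we solve $Y_t=Y_{t_k}+\int_{t_k}^t F(Y_r)\,d\Z_r$, taking as initial datum the terminal value of the previous piece. Concatenating the pieces gives a path $Y$ on $[0,T]$ with $Y'=F(Y)Z'$.

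To check that the concatenation is a solution, we first use additivity of the rough integral, $\int_0^t=\int_0^{t_k}+\int_{t_k}^t$. This follows from Theorem~\ref{thm:1}, since partitions containing $t_k$ are admissible in the limit. It gives~\eqref{eq:48+1} on all of $[0,T]$.

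Then we show that $(Y,Y')\in\CC([0,T],W)$. This is the step I expect to need the most care, because controlled-path seminorms are not simply additive over adjacent intervals. For $s\le u\le t$, with $u$ the split point, I would use the identity
\[
R^{0}_{s,t}=R^{0}_{s,u}+R^{0}_{u,t}+Y'_{s,u}X_{u,t},
\]
and bound the last term by $\|Y'\|_{\al}|t-s|^{2\al}\|X\|_\al$. The $R^1$ remainder is just the increment of $Y'$, which is additive. Iterating over the finitely many intervals gives a finite global bound. This is a patching lemma, presumably the Lemma~\ref{lem:patching} mentioned in the introduction, though here it is only needed qualitatively.

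Uniqueness follows by the same induction. Suppose $\tilde\Y$ is another solution on $[0,T]$. Its restriction to $[t_k,t_{k+1}]$ solves the local problem with the same initial datum, assuming $\tilde Y_{t_k}=Y_{t_k}$ by the inductive hypothesis. The local uniqueness from Theorem~\ref{thm:2} then applies once we know the restriction lies in the ball $B_{\tau}(\HA,R)$. Strictly, uniqueness in Theorem~\ref{thm:2} is only within that ball. To remove this restriction, I would shrink the interval depending on $\|\tilde\Y\|_{\X;\al}$: the same estimate as in~\eqref{eq:49} shows that any solution's norm on a short enough interval falls below $R$. Alternatively, one can compare two solutions directly using Proposition~\ref{prop:3} on small intervals. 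Either way, $\tilde Y=Y$ on each piece and hence on $[0,T]$.
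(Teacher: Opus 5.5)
Your proposal is correct and follows the same route as the paper. The paper's proof also rests on the observation that the local time from Theorem~\ref{thm:2} depends on the restart data only through the jet $F(Y_a)Z'_a$, which is bounded by $\|F\|_\infty(|Z'_0|+T^\alpha\|\Z\|_{\X;\alpha})$ independently of $Y_a$, and then iterates finitely many times. Your explicit gluing argument via $R^0_{s,t}=R^0_{s,u}+R^0_{u,t}+Y'_{s,u}X_{u,t}$, and your treatment of uniqueness outside the ball $B_{\tau}(\HA,R)$, supply details the paper's proof leaves implicit.
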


\begin{proof}
Let $\tau$ be as in Theorem~\ref{thm:2}.
It remains to justify that the local existence time can be chosen uniformly
when the solution is restarted. In the local fixed point argument, the time
\(\tau\) depends on the initial controlled structure only through the initial
Gubinelli derivative. If the equation is restarted at time \(a\in[0,T]\), then
the initial derivative is
\[
Y'_a=F(Y_a)Z'_a .
\]
Since \(F\in C_b^3\), we have
\[
|F(Y_a)Z'_a|
\leq
\|F\|_\infty |Z'_a|.
\]
Moreover,
\[
|Z'_a|
\leq
|Z'_0|+T^\alpha\|Z'\|_\alpha
\leq
|Z'_0|+T^\alpha\|\Z\|_{\X;\alpha}.
\]
Therefore the local existence time can be chosen depending only on
\[
\|F\|_{C_b^3},\quad
\|\X\|_\alpha,\quad
|Z'_0|,\quad
\|\Z\|_{\X;\alpha},
\quad\text{and}\quad T,
\]
but not on the current value \(Y_a\) of the solution at the restarting time.
Hence the local solution can be iterated finitely many times to obtain a
solution on the whole interval \([0,T]\).
By Theorem~\ref{thm:2}, we get a solution on $[0, \tau]$. Taking $Y_\tau$ as a new initial condition, we obtain a solution to~(\ref{eq:48+1}) on $[\tau, 2\tau]$ by Theorem~\ref{thm:2} again.
Continuing this process, we obtain a solution $Y$ to~(\ref{eq:48+1}) on $[0, T]$ after finite steps.
\end{proof}

We now relate the controlled-driven formulation to the classical rough path
formulation. Let $\alpha\in(1/3,1/2]$, let
$\X=(1,X,\mathbb X)\in\D$, and let
\[
 \Z=(Z,Z')\in C^\alpha_{\mathbf X}([0,T],W).
\]
For fixed $s\in[0,T]$, define
\[
 L^s_u\in L(W,W\otimes W),
 \qquad
 L^s_u(w):=Z_{s,u}\otimes w,
 \qquad u\in[s,T].
\]
As justified in the proof below, the path $u\mapsto L^s_u$ is $\mathbf X$-controlled. We therefore define
\[
 \mathbb Z_{s,t}
 :=
 \int_s^t L^s_u\,d\Z_u
 =
 \int_s^t Z_{s,u}\otimes dZ_u,
 \qquad 0\le s\le t\le T.
\]
Equivalently, this second level is given by the compensated Riemann sums
\[
 \mathbb Z_{s,t}
 =
 \lim_{|\pi|\to0}
 \sum_{[u,v]\in\pi}
 \left(
 Z_{s,u}\otimes Z_{u,v}
 +
 (Z'_u\otimes Z'_u)\mathbb X_{u,v}
 \right),
\]
where $\pi$ runs over partitions of $[s,t]$. Thus the second level above $Z$
is not postulated independently; it is constructed from the controlled
structure of $\Z$ relative to the reference rough path $\mathbf X$.

The next theorem shows that
$
 \widehat{\mathbf Z}:=(1,Z,\mathbb Z)
$
is a genuine level-$2$ rough path and that the controlled-driven equation is
equivalent to the classical rough differential equation driven by
$\widehat{\mathbf Z}$.

\begin{theorem}
With the above setting, 
$
\widehat{\mathbf Z}:=(1,Z,\mathbb Z)
$
defines a level-$2$ $\alpha$-H\"older rough path over $W$, and satisfies
\[
\|\widehat{\mathbf Z}\|_\alpha
\leq
M\bigl(T,\|\mathbf X\|_\alpha,|Z'_0|,\|\Z\|_{\mathbf X;\alpha}\bigr).
\]
Furthermore, let $F\in C_b^3(U,L(W,U))$. Then an $U$-valued path $Y$ solves
\[
Y_t=Y_0+\int_0^t F(Y_u)\,d\Z_u
\]
in the controlled-driven sense if and only if $Y$ solves the classical rough
differential equation driven by the induced rough path $\widehat{\mathbf Z}$,
namely
\[
Y_t=Y_0+\int_0^t F(Y_u)\,d\widehat{\mathbf Z}_u.
\]
\mlabel{thm:canonical-lift}
\end{theorem}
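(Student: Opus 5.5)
The plan has three parts: show that $L^s$ is $\X$-controlled, check Chen's relation and the Hölder bounds for $\widehat{\Z}$, and then identify the two integrals.

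\textbf{Step 1: $L^s$ is controlled.} For fixed $s$, I would show that $(L^s,(L^s)')$ with $(L^s_u)'x := (Z'_u x)\otimes(\cdot)$ is $\X$-controlled on $[s,T]$. In other words, $(L^s_u)'(x)(w)=Z'_u(x)\otimes w$. The remainders are then
\[
R^{0,L^s}_{u,v}=R^{0,\Z}_{u,v}\otimes(\cdot),\qquad R^{1,L^s}_{u,v}=Z'_{u,v}\otimes(\cdot).
\]
With an admissible cross norm this gives $\|L^s\|_{\X;\al}\le\|\Z\|_{\X;\al}$ and $(L^s_s)'=Z'_s\otimes(\cdot)$. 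Theorem~\ref{thm:1} then applies with $Y=L^s$, and the compensated sum in the definition of $\mathbb Z$ is exactly $Y_uZ_{u,v}+Y'_uZ'_u\x_{u,v}$.

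\textbf{Step 2: Hölder bounds and Chen's relation.} Corollary~\ref{coro:esti} gives
\[
\Big|\mathbb Z_{s,t}-(Z'_s\otimes Z'_s)\x_{s,t}\Big|\le C_{\al,T}\,(|Z'_s|+\|\Z\|_{\X;\al})(|Z'_0|+\|\Z\|_{\X;\al})(1+\|\X\|_\al)|t-s|^{3\al},
\]
since $L^s_s=0$. Restarting at $s$, with $|Z'_s|\le|Z'_0|+T^\al\|\Z\|_{\X;\al}$ by~\eqref{eq:26+3}, this yields $|\mathbb Z_{s,t}|\lesssim|t-s|^{2\al}$. The estimate~\eqref{eq:26+7} applied to $Z$ gives $\|Z\|_\al$. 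Together these give the bound $M(T,\|\X\|_\al,|Z'_0|,\|\Z\|_{\X;\al})$.

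For Chen's relation, I would work at the level of the compensated Riemann sums over partitions containing $u$, for $s\le u\le t$. Using $Z_{s,r}=Z_{s,u}+Z_{u,r}$ for $r\ge u$, the sum over $[u,t]$ splits as
\[
\sum\big(Z_{u,r}\otimes Z_{r,r'}+(Z'_r\otimes Z'_r)\x_{r,r'}\big)+Z_{s,u}\otimes Z_{u,t}.
\]
Passing to the limit gives $\mathbb Z_{s,t}=\mathbb Z_{s,u}+\mathbb Z_{u,t}+Z_{s,u}\otimes Z_{u,t}$, using independence of the limit from the partition (Theorem~\ref{thm:1}).

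\textbf{Step 3: equivalence of the equations.} This is the step I expect to be the main obstacle, because the two notions of solution live in different controlled spaces. A solution in the controlled-driven sense is an $\X$-controlled $Y$ with $Y'=F(Y)Z'$ (Remark~\ref{rem:add1}). A classical solution is $\widehat\Z$-controlled with $Y^{\widehat{}}{}'=F(Y)$. I would compare the compensated increments directly, using the classical local expansion for $\widehat\Z$:
\[
\int_s^t F(Y)\,d\widehat{\Z}=F(Y_s)Z_{s,t}+DF(Y_s)F(Y_s)\mathbb Z_{s,t}+O(|t-s|^{3\al}).
\]
For the controlled-driven integral, Corollary~\ref{coro:esti} gives $F(Y_s)Z_{s,t}+DF(Y_s)Y'_sZ'_s\x_{s,t}+O(|t-s|^{3\al})$ with $Y'_s=F(Y_s)Z'_s$.

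By Step 2, $\mathbb Z_{s,t}=(Z'_s\otimes Z'_s)\x_{s,t}+O(|t-s|^{3\al})$. Hence the two local germs coincide up to $O(|t-s|^{3\al})$, and $3\al>1$, so the limits of the Riemann sums agree. This uses uniqueness of sewing limits, i.e. the two-partition Cauchy argument in Theorem~\ref{thm:1}.

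The work lies in verifying that each notion of solution makes $F(Y)$ admissible in the other framework. For the forward direction: if $Y$ is an $\X$-controlled solution, then $Y_{s,t}=F(Y_s)Z_{s,t}+O(|t-s|^{2\al})$, so $Y$ is $\widehat\Z$-controlled with derivative $F(Y)$. For the converse: given a $\widehat\Z$-controlled $Y$, I would substitute $Z_{s,t}=Z'_sX_{s,t}+R^{0,\Z}_{s,t}$ to get $Y_{s,t}=F(Y_s)Z'_sX_{s,t}+O(|t-s|^{2\al})$, so $Y$ is $\X$-controlled with $Y'=F(Y)Z'$. Then $F(Y)$ is controlled in either sense by Lemma~\ref{lem:2}, and the germ comparison above identifies the two integrals. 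Uniqueness from Theorem~\ref{thm:3} and classical uniqueness then make the equivalence exact.
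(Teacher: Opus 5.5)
Your proposal is correct and follows the paper's proof essentially step by step. You show $L^s$ is $\X$-controlled with $(L^s)'_u(\xi)(w)=Z'_u(\xi)\otimes w$, then apply Corollary~\ref{coro:esti} with $L^s_s=0$ to get $\mathbb Z_{s,t}=(Z'_s\otimes Z'_s)\x_{s,t}+O(|t-s|^{3\al})$, and finish by comparing the two local germs in both directions; your Riemann-sum derivation of Chen's relation over partitions containing $u$ is the paper's additivity argument written out. The closing appeal to uniqueness (Theorem~\ref{thm:3} and classical uniqueness) is unnecessary, because the germ comparison already shows directly that every solution in one sense is a solution in the other.
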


\begin{proof}
We divide the proof into several steps.

\noindent{\bf Step 1. }  We show that $\widehat{\mathbf Z}$ satisfies Chen's relation. Since
$\Z=(Z,Z')\in C^\alpha_{\mathbf X}([0,T],W)$, we have
\[
Z_{s,t}=Z'_sX_{s,t}+R^{0, \Z}_{s,t},
\qquad
\|R^{0, \Z}\|_{2\alpha}<\infty,
\qquad
\|Z'\|_\alpha<\infty .
\]
For fixed $s\in[0,T]$, set
\[
L^s_u\in L(W,W\otimes W),
\qquad
L^s_u(w):=Z_{s,u}\otimes w .
\]
Then $L^s$ is an $\mathbf X$-controlled path with derivative
\[
(L^s)'_u(\xi)(w):=Z'_u(\xi)\otimes w,
\qquad \xi\in V,\ w\in W.
\]
Indeed,
\[
L^s_v(w)-L^s_u(w)
=
Z_{u,v}\otimes w
=
Z'_uX_{u,v}\otimes w+R^{0,\Z}_{u,v}\otimes w.
\]
Therefore the integral
\[
\mathbb Z_{s,t}:=\int_s^t L^s_u\,d\Z_u
\]
is well-defined by Theorem~\ref{thm:1}, and we write it as
\[
\mathbb Z_{s,t}=\int_s^t Z_{s,u}\otimes dZ_u.
\]

Let $0\leq s\leq u\leq t\leq T$. The first level clearly satisfies $Z_{s,t}=Z_{s,u}+Z_{u,t}$.
For the second level, by additivity and linearity of the controlled rough integral,
\[
\begin{aligned}
\mathbb Z_{s,t}
&=
\int_s^t Z_{s,r}\otimes dZ_r  \\
&=
\int_s^u Z_{s,r}\otimes dZ_r
+
\int_u^t Z_{s,r}\otimes dZ_r  \\
&=
\mathbb Z_{s,u}
+
\int_u^t (Z_{s,u}+Z_{u,r})\otimes dZ_r  \\
&=
\mathbb Z_{s,u}
+
Z_{s,u}\otimes Z_{u,t}
+
\int_u^t Z_{u,r}\otimes dZ_r  \\
&=
\mathbb Z_{s,u}
+
Z_{s,u}\otimes Z_{u,t}
+
\mathbb Z_{u,t}.
\end{aligned}
\]
Hence
\[
\mathbb Z_{s,t}
=
\mathbb Z_{s,u}
+
\mathbb Z_{u,t}
+
Z_{s,u}\otimes Z_{u,t},
\]
which is precisely Chen's relation.

\noindent{\bf Step 2. }  We prove the H\"older estimates. Since
\[
Z_{s,t}=Z'_sX_{s,t}+R^{0, \Z}_{s,t},
\]
we have
\[
|Z_{s,t}|
\leq
|Z'_s|\,|X_{s,t}|+|R^{0, \Z}_{s,t}|.
\]
Moreover,
\[
|Z'_s|
\leq
|Z'_0|+|Z'_s-Z'_0|
\leq
|Z'_0|+T^\alpha\|Z'\|_\alpha.
\]
Since
\[
\|Z'\|_\alpha\leq \|\Z\|_{\mathbf X;\alpha},
\qquad
\|R^{0, \Z}\|_{2\alpha}\leq \|\Z\|_{\mathbf X;\alpha},
\]
it follows that
\[
\begin{aligned}
|Z_{s,t}|
&\leq
\bigl(|Z'_0|+T^\alpha\|\Z\|_{\mathbf X;\alpha}\bigr)
\|X\|_\alpha |t-s|^\alpha
+
\|\Z\|_{\mathbf X;\alpha}|t-s|^{2\alpha}  \\
&\leq
M\bigl(T,\|\mathbf X\|_\alpha,|Z'_0|,\|\Z\|_{\mathbf X;\alpha}\bigr)
|t-s|^\alpha .
\end{aligned}
\]
Therefore
\[
\|Z\|_\alpha
\leq
M\bigl(T,\|\mathbf X\|_\alpha,|Z'_0|,\|\Z\|_{\mathbf X;\alpha}\bigr).
\]

We now estimate the second level. By Corollary~\ref{coro:esti} applied to the integral
\[
\mathbb Z_{s,t}=\int_s^t L^s_u\,d\Z_u,
\]
we obtain
\[
\left|
\mathbb Z_{s,t}
-
\left(
L^s_s Z_{s,t}
+
(L^s)'_s Z'_s\mathbb X_{s,t}
\right)
\right|
\le C_s|t-s|^{3\alpha},
\]
where $C_s$ is bounded by
\[
C_s
\leq
M\bigl(T,\|\mathbf X\|_\alpha,|Z'_0|,\|\Z\|_{\mathbf X;\alpha}\bigr).
\]

Since
\[
L^s_s=0,
\qquad
(L^s)'_s Z'_s\mathbb X_{s,t}
=
(Z'_s\otimes Z'_s)\mathbb X_{s,t},
\]
we get
\[
\left|
\mathbb Z_{s,t}
-
(Z'_s\otimes Z'_s)\mathbb X_{s,t}
\right|
\leq
M\bigl(T,\|\mathbf X\|_\alpha,|Z'_0|,\|\Z\|_{\mathbf X;\alpha}\bigr)
|t-s|^{3\alpha}.
\]
Consequently,
\[
\begin{aligned}
|\mathbb Z_{s,t}|
&\leq
|Z'_s|^2|\mathbb X_{s,t}|
+
M\bigl(T,\|\mathbf X\|_\alpha,|Z'_0|,\|\Z\|_{\mathbf X;\alpha}\bigr)
|t-s|^{3\alpha}  \\
&\leq
M\bigl(T,\|\mathbf X\|_\alpha,|Z'_0|,\|\Z\|_{\mathbf X;\alpha}\bigr)
|t-s|^{2\alpha}.
\end{aligned}
\]
Thus
\[
\|\mathbb Z\|_{2\alpha}
\leq
M\bigl(T,\|\mathbf X\|_\alpha,|Z'_0|,\|\Z\|_{\mathbf X;\alpha}\bigr).
\]
Combining the first and second level estimates yields
\[
\|\widehat{\mathbf Z}\|_\alpha
=
\|Z\|_\alpha+\|\mathbb Z\|_{2\alpha}
\leq
M\bigl(T,\|\mathbf X\|_\alpha,|Z'_0|,\|\Z\|_{\mathbf X;\alpha}\bigr).
\]
Hence $\widehat{\mathbf Z}=(1,Z,\mathbb Z)$ is a level-$2$ $\alpha$-H\"older rough path.

\noindent{\bf Step 3. } We prove the equivalence of the two equations. Suppose first that $Y$
solves
\[
Y_t=Y_0+\int_0^t F(Y_r)\,d \Z_r
\]
in the controlled-driven sense. By the closure property of the controlled rough
integral, the solution $Y$ is controlled by $\mathbf X$ and its Gubinelli
derivative is
\[
Y'_s=F(Y_s)Z'_s.
\]
Therefore
\[
Y_{s,t}=Y'_sX_{s,t}+R^{0, \Y}_{s,t}
=
F(Y_s)Z'_sX_{s,t}+R^{0, \Y}_{s,t}.
\]
Since
\[
Z_{s,t}=Z'_sX_{s,t}+R^{0, \Z}_{s,t},
\]
we obtain
\[
Y_{s,t}-F(Y_s)Z_{s,t}
=
R^{0, \Y}_{s,t}-F(Y_s)R^{0, \Z}_{s,t}.
\]
Thus
\[
|Y_{s,t}-F(Y_s)Z_{s,t}|
\lesssim |t-s|^{2\alpha}.
\]
Hence $Y$ is controlled by the rough path $\widehat{\mathbf Z}$, with Gubinelli derivative
with respect to $Z$ given by
\[
Y^\dagger_s=F(Y_s).
\]

The local approximation of the classical rough integral
\[
\int_s^t F(Y_u)\,d\widehat{\mathbf Z}
\]
is
\[
F(Y_s)Z_{s,t}
+
DF(Y_s)Y^\dagger_s\,\mathbb Z_{s,t}.
\]
Since $Y^\dagger_s=F(Y_s)$, this equals
\[
F(Y_s)Z_{s,t}
+
DF(Y_s)F(Y_s)\mathbb Z_{s,t}.
\]
Using
\[
\mathbb Z_{s,t}
=
(Z'_s\otimes Z'_s)\mathbb X_{s,t}
+
O(|t-s|^{3\alpha}),
\]
we get
\[
F(Y_s)Z_{s,t}
+
DF(Y_s)F(Y_s)\mathbb Z_{s,t}
=
F(Y_s)Z_{s,t}
+
DF(Y_s)F(Y_s)(Z'_s\otimes Z'_s)\mathbb X_{s,t}
+
O(|t-s|^{3\alpha}).
\]
On the other hand,
\[
Y'_s=F(Y_s)Z'_s,
\]
and therefore
\[
DF(Y_s)F(Y_s)(Z'_s\otimes Z'_s)\mathbb X_{s,t}
=
DF(Y_s)Y'_s Z'_s\mathbb X_{s,t}.
\]
Since
\[
(F(Y))'_s=DF(Y_s)Y'_s,
\]
the above expression is
\[
(F(Y))'_s Z'_s\mathbb X_{s,t}.
\]
Hence the classical local approximation becomes
\[
F(Y_s)Z_{s,t}
+
(F(Y))'_s Z'_s\mathbb X_{s,t}
+
O(|t-s|^{3\alpha}).
\]
But
\[
F(Y_s)Z_{s,t}
+
(F(Y))'_s Z'_s\mathbb X_{s,t}
\]
is precisely the local compensated approximation defining the controlled rough
integral
\[
\int_s^t F(Y_u)\,d \Z_u.
\]
Since $3\alpha>1$, the accumulated contribution of the
$O(|t-s|^{3\alpha})$ term over partitions tends to zero as the mesh size tends
to zero. Consequently,
\[
\int_s^t F(Y_u)\,d\mathbf Z_u
=
\int_s^t F(Y_u)\,d \widehat{\mathbf Z}_u.
\]
Therefore
\[
Y_t
=
Y_0+\int_0^t F(Y_u)\,d \Z_u
=
Y_0+\int_0^t F(Y_u)\,d \widehat{\mathbf Z}_u,
\]
so $Y$ solves the classical RDE driven by $\widehat{\mathbf Z}$.

Conversely, suppose that $Y$ solves the classical RDE
\[
Y_t=Y_0+\int_0^t F(Y_u)\,d \widehat{\mathbf Z}_u.
\]
Then $Y$ is controlled by $\widehat{\mathbf Z}$, with derivative $Y^\dagger_s=F(Y_s)$, that is,
\[
Y_{s,t}=F(Y_s)Z_{s,t}+R^{0,\widehat{\mathbf Z}}_{s,t},
\qquad
|R^{0,\widehat{\mathbf Z}}_{s,t}|\lesssim |t-s|^{2\alpha}.
\]
Since $Z$ is controlled by $\mathbf X$,
\[
Z_{s,t}=Z'_sX_{s,t}+R^{0, \Z}_{s,t}.
\]
Substituting this into the previous expansion gives
\[
Y_{s,t}
=
F(Y_s)Z'_sX_{s,t}
+
F(Y_s)R^{0, \Z}_{s,t}
+
R^{0,\widehat{\mathbf Z}}_{s,t}.
\]
Hence $Y$ is controlled by $\mathbf X$, with Gubinelli derivative
\[
Y'_s=F(Y_s)Z'_s.
\]

Now the same comparison of local approximations applies. The classical
$\widehat{\mathbf Z}$-integral has local approximation
\[
F(Y_s)Z_{s,t}
+
DF(Y_s)F(Y_s)\mathbb Z_{s,t}.
\]
Using
\[
\mathbb Z_{s,t}
=
(Z'_s\otimes Z'_s)\mathbb X_{s,t}
+
O(|t-s|^{3\alpha}),
\]
this becomes
\[
F(Y_s)Z_{s,t}
+
DF(Y_s)F(Y_s)(Z'_s\otimes Z'_s)\mathbb X_{s,t}
+
O(|t-s|^{3\alpha}).
\]
Since
\[
Y'_s=F(Y_s)Z'_s
\quad\text{and}\quad
(F(Y))'_s=DF(Y_s)Y'_s,
\]
we have
\[
DF(Y_s)F(Y_s)(Z'_s\otimes Z'_s)\mathbb X_{s,t}
=
(F(Y))'_s Z'_s\mathbb X_{s,t}.
\]
Therefore the classical local approximation equals
\[
F(Y_s)Z_{s,t}
+
(F(Y))'_s Z'_s\mathbb X_{s,t}
+
O(|t-s|^{3\alpha}).
\]
Again, because $3\alpha>1$, the last term vanishes in the Riemann-sum limit.
Hence
\[
\int_s^t F(Y_u)\,d \widehat{\mathbf Z}_u
=
\int_s^t F(Y_u)\,d \Z_u.
\]
Therefore
\[
Y_t
=
Y_0+\int_0^t F(Y_u)\,d \widehat{\mathbf Z}_u
=
Y_0+\int_0^t F(Y_u)\,d \Z_u.
\]
Thus $Y$ also solves the controlled-driven equation.
The proof is completed.
\end{proof}

We conclude this section with a remark.

\begin{remark}[Why the controlled formulation is essential]\mlabel{rem:controlled-formulation-essential}
The equivalence in Theorem~\ref{thm:canonical-lift} should not be interpreted as saying that controlled-driven rough differential equations are redundant. On the contrary, it clarifies the precise role and value of the controlled formulation. The point is not merely to rewrite the equation in a classical rough path form, but to understand how such a form is produced from controlled data and how this structure behaves under perturbations. We summarize this significance in four aspects.

\begin{enumerate}
\item \textbf{The controlled formulation constructs the missing rough path lift.}
In the classical rough path formulation, one has to start from a rough path lift
$
 \widehat{\mathbf Z}=(1,Z,\mathbb Z).
$
In many applications, however, this lift is not given a priori. What is naturally available is a reference rough path
$
 \mathbf X=(1,X,\mathbb X)
$
and an $\mathbf X$-controlled driver
$
 \mathbf Z=(Z,Z').
$
Theorem~\ref{thm:canonical-lift} solves this preliminary problem by constructing
\[
 \mathbb Z_{s,t}:=\int_s^t Z_{s,u}\otimes dZ_u
\]
from the controlled data and by proving that
$
 \widehat{\mathbf Z}=(1,Z,\mathbb Z)
$
is a genuine level-$2$ rough path. Thus the controlled theory supplies the rough path enhancement required before the classical theory can be applied.

\item \textbf{The controlled formulation keeps the dependence on the reference rough path visible.}
The equivalence in Theorem~\ref{thm:canonical-lift} is an equivalence for a fixed controlled driver $Z$ and its induced lift $\widehat{\mathbf Z}$. It does not remove the need to understand how $Z$ depends on the underlying reference rough path $X$. In layered rough systems, the effective driver $Z$ is often produced from $X$, for instance as the solution of a lower-level rough differential equation, as a rough integral output, or as a transformed signal $Z=\varphi(X)$. In such situations, the natural data are
$
 (\mathbf X,\mathbf Z),
$
not only the induced rough path $\widehat{\mathbf Z}$. Passing directly to $\widehat{\mathbf Z}$ hides this hierarchical dependence, whereas the controlled formulation preserves it.

\item \textbf{The controlled formulation gives a finer stability theorem.}
The classical universal limit theorem gives continuity of the solution with respect to the induced rough path $\widehat{\mathbf Z}$. 
The controlled formulation leads to a more refined statement. In Theorem~\ref{thm:4} below, the solution is estimated in terms of perturbations of both the reference rough paths
$
 \mathbf X,\widetilde{\mathbf X}
$
and the controlled drivers
$
 \mathbf Z,\widetilde{\mathbf Z}.
$
Equivalently, the controlled theory studies the solution map
$
 (\mathbf X,\mathbf Z)\longmapsto Y,
$
rather than only the classical map
$
 \widehat{\mathbf Z}\longmapsto Y.
$
This is the appropriate stability framework when the driver is itself part of a multi-level rough system.

\item \textbf{The controlled formulation is natural for applications and approximation.}
For a cascaded rough system such as
\[
 dZ_t=G(Z_t)\,d\mathbf X_t,
 \qquad
 dY_t=F(Y_t)\,d\mathbf Z_t,
\]
one needs to understand how perturbations or numerical errors propagate along the chain
\[
 X\longrightarrow Z\longrightarrow Y.
\]
The pair $(\mathbf X,\mathbf Z)$ retains this layered structure, while the single induced lift $\widehat{\mathbf Z}$ compresses it. Therefore, Theorem~\ref{thm:canonical-lift} is not a reduction that makes controlled-driven RDEs unnecessary. Instead, it is the structural bridge showing that the controlled-driven theory is compatible with classical rough path theory while retaining the additional information needed for stability, approximation, and multi-layer rough dynamics.
\end{enumerate}
\end{remark}

%

\section{Universal limit theorem}\label{ss:sec4}
In this section, we address the robustness of the solution of~(\ref{eq:48+1}) with respect to the initial condition, the driving rough path $\X$ and the driving controlled rough path $\Z$.

\begin{theorem} (Universal limit theorem)
Let $\al \in(\frac{1}{3},\frac{1}{2}]$, and let $\X =(1, X, \x)$ and $\tilde{\X}=(1, \tilde X, \tilde \x)$ be in $\D$. Let $\Z = (Z, Z')\in \CC([0, T], U)$ and $\tilde \Z = (\tilde Z, \tilde Z')\in \CCA([0, T], U)$, and assume that $F\in C_b^3(W, \mathcal{L}(U, W))$. Let $\Y=(Y, Y')\in \CC([0, T], W )$ (resp. $\tilde \Y=(\tilde Y, \tilde Y')\in \CCA([0, T], W )$) be the unique solution to the controlled-driven rough differential equation
driven by  $\mathbf Z$ (resp.\ $\tilde{\mathbf Z}$):
\begin{equation*}
Y_t=Y_0+\int_0^t F(Y_r)\,d\Z_r\quad \Big(\text{resp.} \tilde Y_t=\tilde Y_0+\int_0^t F(\tilde Y_r)\,d\tilde \Z_r\Big).
\end{equation*}
Then the following estimate holds:
\begin{align*}
d_{\X, \tilde \X; \al}(\Y, \tilde \Y)
\le&\ C_{\al, T}M(T, \|F\|_{C_b^3}, |Y'_0|, |\tilde Y'_0|, |Z'_0|, |\tilde Z'_0|, \|\Y\|_{\X; \al}, \|\tilde \Y\|_{\tilde \X; \al}, \|\Z\|_{\X; \al}, \|\tilde \Z\|_{\tilde \X; \al}, \|\X\|_{\al}, \|\tilde \X\|_{\al})\nonumber\\
&\ \times\Big(|Y_0-\tilde Y_0|+|Y'_0-\tilde Y'_0|+|Z_0-\tilde Z_0|+|Z'_0-\tilde Z'_0|+d_{\X, \tilde \X; \al}(\Z, \tilde \Z)+\|\X-\tilde \X\|_{\al}\Big).
\end{align*}
\mlabel{thm:4}
\end{theorem}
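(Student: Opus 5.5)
The plan is to follow the classical proof of Lyons' universal limit theorem in the Gubinelli formulation, adapted so that both the reference rough path and the controlled driver vary. First I will prove a local stability estimate on a short interval $[0,\tau]$, in the spirit of Proposition~\ref{prop:3}. Then I will globalize it by patching over finitely many subintervals, as announced in the introduction (Proposition~\ref{prop:4} and Lemma~\ref{lem:patching}).

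\textbf{Step 1: a two-rough-path integral estimate.} This is the analogue of Corollary~\ref{coro:esti} for differences. Take $\Y\in\CC$, $\Z\in\CC$ and $\tilde\Y,\tilde\Z\in\CCA$. I want to bound
\[
\Big|\Big(\int_s^t Y_r\,d\Z_r-Y_sZ_{s,t}-Y'_sZ'_s\x_{s,t}\Big)-\Big(\int_s^t \tilde Y_r\,d\tilde\Z_r-\tilde Y_s\tilde Z_{s,t}-\tilde Y'_s\tilde Z'_s\tilde\x_{s,t}\Big)\Big|
\]
by $M(\cdots)\,\varepsilon\,|t-s|^{3\alpha}$. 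Here $\varepsilon$ denotes the sum of the initial differences, $d_{\X,\tilde\X;\al}(\Y,\tilde\Y)$, $d_{\X,\tilde\X;\al}(\Z,\tilde\Z)$ and $\|\X-\tilde\X\|_\al$.

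To prove it, I repeat the point-removal argument of Theorem~\ref{thm:1} for the difference of the two Riemann sums. The one-point-removal identity \eqref{eq:20+1} is multilinear in $(R^{0,\Y},R^{0,\Z},Y',Z',X,\x)$. The difference of its two copies therefore telescopes into a sum of terms, each containing exactly one difference factor: $R^{0,\Y}-R^{0,\tilde\Y}$, $R^{0,\Z}-R^{0,\tilde\Z}$, $Y'-\tilde Y'$, $Z'-\tilde Z'$, $X-\tilde X$ or $\x-\tilde\x$. Each such term is of order $|t_{j+1}-t_{j-1}|^{3\al}$. The sup-norms of $Y'-\tilde Y'$ and $Z'-\tilde Z'$ are controlled as in \eqref{eq:6}, using $R^{1}$ differences and initial values. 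The summation over removed points is unchanged because $3\al>1$.

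\textbf{Step 2: local stability on $[0,\tau]$.} Both solutions are fixed points: $\Y=\J$ and $\tilde\Y=\tilde\J$, with $Y'=F(Y)Z'$ and $\tilde Y'=F(\tilde Y)\tilde Z'$. I split the remainders as in the proof of Proposition~\ref{prop:3}. For $R^{0,\J}-R^{0,\tilde\J}$, I apply Step 1 to the pairs $F(\Y),\Z$ and $F(\tilde\Y),\tilde\Z$, and I estimate the explicit terms $F(Y_s)R^{0,\Z}_{s,t}+DF(Y_s)Y'_sZ'_s\x_{s,t}$ minus their tilde versions.

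Step 1 needs a bound on $d_{\X,\tilde\X;\al}(F(\Y),F(\tilde\Y))$, a two-rough-path version of Lemma~\ref{lem:4}(2). I will derive it by Taylor expansion, exactly as in Steps 1.1, 1.2 and 2.1 of Proposition~\ref{prop:3}, with the extra terms coming from $X-\tilde X$. The $R^1$ part is handled as in Step 2 of Proposition~\ref{prop:3}, with additional $Z'-\tilde Z'$ terms.

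The outcome should be
\[
d(\Y,\tilde\Y)\le M\bigl(\tau^\al d(\Y,\tilde\Y)+\delta\bigr),
\]
where $\delta$ collects the data differences. Choosing $\tau$ so that $M\tau^\al\le 1/2$ and absorbing, I get $d\le 2M\delta$ on $[0,\tau]$. Since the norms of the solutions appear in $M$ and are a priori finite, this $\tau$ depends only on the quantities listed in the theorem.

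\textbf{Step 3: patching.} On $[k\tau,(k+1)\tau]$ the new initial data are $Y_{k\tau}-\tilde Y_{k\tau}$ and $Y'_{k\tau}-\tilde Y'_{k\tau}$. Both are bounded by the previous step's estimate via \eqref{eq:27+5}, together with the corresponding bound for $|Y'_s-\tilde Y'_s|$. The differences of $\Z$ at $k\tau$ are bounded similarly, which is why $|Z_0-\tilde Z_0|$ and $|Z'_0-\tilde Z'_0|$ appear in the statement.

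Finally I need that Hölder seminorms over adjacent intervals combine. For $R^0$ this requires the identity
\[
R^{0}_{s,t}=R^{0}_{s,u}+R^{0}_{u,t}+Y'_{s,u}X_{u,t},
\]
and its difference version contributes terms in $X-\tilde X$ and $Y'-\tilde Y'$. After $\lceil T/\tau\rceil$ steps, the constants compound into a new function $M$.

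\textbf{Main obstacle.} I expect the main difficulty to be Step 1 together with the two-rough-path composition estimate. The difference $\Y-\tilde\Y$ is not controlled by any single rough path, so cross terms such as $Y'_s(X-\tilde X)_{s,t}$ must be tracked explicitly rather than absorbed into a seminorm. I will handle this by writing every difference multilinearly and bounding it term by term.
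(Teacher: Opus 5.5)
Your proposal is correct and follows essentially the same route as the paper. Both arguments use a point-removal estimate for the difference of the two compensated Riemann sums (the paper's Proposition~\ref{prop:4}, with the same one-difference-factor terms), a Proposition~\ref{prop:3}-style splitting giving $d(\Y,\tilde\Y)\le M(\tau^\alpha d(\Y,\tilde\Y)+\delta)$ with absorption on short intervals, propagation of endpoint errors, and patching over finitely many subintervals (Lemma~\ref{lem:patching}); your explicit two-rough-path composition estimate for $F(\Y)$ versus $F(\tilde\Y)$ and your gluing identity $R^{0}_{s,t}=R^{0}_{s,u}+R^{0}_{u,t}+Y'_{s,u}X_{u,t}$ fill in details the paper only sketches.
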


To prove Theorem~\ref{thm:4}, we first present a refined local stability estimate, which improves Proposition~\ref{prop:3}. This estimate will be used on small time intervals, and the global estimate on \([0,T]\) will then follow from the patching lemma stated below.

\begin{proposition}
Let $\al \in(\frac{1}{3},\frac{1}{2}]$, and let $\X =(1, X, \x)$ and $\tilde{\X}=(1, \tilde X, \tilde \x)$ be in $\D$. Let $\Y = (Y, Y')\in \CC([0, T], W)$ and $\tilde{\Y}  = (\tilde Y, \tilde Y')\in \CCA([0, T], W)$, and let $\Z = (Z, Z')\in \CC([0, T], U)$ and $\tilde \Z = (\tilde Z, \tilde Z')\in \CCA([0, T], U)$. Assume that $F\in C_b^3(W, \mathcal{L}(U, W))$. Then
\begin{align*}
d_{\X, \tilde \X; \al}(\J, \tilde \J)
\le&\  C_{\al, T}M(T, \|F\|_{C_b^3}, |Y'_0|, |\tilde Y'_0|, |Z'_0|, |\tilde Z'_0|, \|\Y\|_{\X; \al}, \|\tilde \Y\|_{\tilde \X; \al}, \|\Z\|_{\X; \al}, \|\tilde \Z\|_{\tilde \X; \al}, \|\X\|_{\al}, \|\tilde \X\|_{\al})\nonumber\\
&\ \hspace{-1cm} \times\Big(T^{\al}d_{\X, \tilde \X; \al}(\Y, \tilde \Y)+|Y_0-\tilde Y_0|+|Y'_0-\tilde Y'_0|
+d_{\X, \tilde \X; \al}(\Z, \tilde \Z)+|Z_0-\tilde Z_0|+|Z'_0-\tilde Z'_0|+\|\X-\tilde \X\|_{\al}\Big) \nonumber,
\end{align*}
where
\[
\J=\Big( Y_0+\int_0^{\bullet} F( Y_r)\,d \Z_r, F( Y) Z' \Big),\quad
\tilde \J=\Big(\tilde Y_0+\int_0^{\bullet} F(\tilde Y_r)\,d\tilde \Z_r, F(\tilde Y)\tilde Z' \Big).
\]
\mlabel{prop:4}
\end{proposition}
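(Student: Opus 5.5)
We argue as in Proposition~\ref{prop:3}, but now with two reference rough paths. Since $d_{\X,\tilde\X;\al}(\J,\tilde\J)=\|R^{0,\J}-R^{0,\tilde\J}\|_{2\al}+\|R^{1,\J}-R^{1,\tilde\J}\|_{\al}$, we bound the two remainder differences separately. Before that, we record the two-rough-path analogues of \eqref{eq:26+3}, \eqref{eq:26+7} and \eqref{eq:27+5}. The increments $Y_{s,t}-\tilde Y_{s,t}$ are written as
\[
(Y'_s-\tilde Y'_s)X_{s,t}+\tilde Y'_s(X_{s,t}-\tilde X_{s,t})+R^{0,\Y}_{s,t}-R^{0,\tilde\Y}_{s,t}.
\]
This gives
\[
|Y_s-\tilde Y_s|+|Y'_s-\tilde Y'_s|\lesssim |Y_0-\tilde Y_0|+|Y'_0-\tilde Y'_0|+T^\al\bigl(d_{\X,\tilde\X;\al}(\Y,\tilde\Y)+\|\X-\tilde\X\|_\al\bigr)
\]
and $\|Y-\tilde Y\|_\al\lesssim \ldots$, with constants of the type $M(\cdot)$. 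The same bounds hold for $Z-\tilde Z$ and $Z'-\tilde Z'$, with $|Z_0-\tilde Z_0|$ and $d_{\X,\tilde\X;\al}(\Z,\tilde\Z)$ in place of the $Y$-quantities. We also need a two-path version of Lemma~\ref{lem:4}(2), namely
\[
d_{\X,\tilde\X;\al}\bigl(F(\Y),F(\tilde\Y)\bigr)\le \|F\|_{C_b^3}M\bigl(d_{\X,\tilde\X;\al}(\Y,\tilde\Y)+|Y_0-\tilde Y_0|+|Y'_0-\tilde Y'_0|+\|\X-\tilde\X\|_\al\bigr),
\]
which is the standard composition stability estimate of \cite[Theorem 4.1]{BG22}.

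\textbf{The term $R^1$.} We decompose $F(Y_t)Z'_t-F(Y_s)Z'_s$ as in \eqref{eq:42}. The difference with its tilde counterpart produces terms $I_5$ and $I_6$ as before, plus new terms in which $Z'$ is replaced by $Z'-\tilde Z'$. These new terms are bounded through $\|Z'-\tilde Z'\|_\al\le d_{\X,\tilde\X;\al}(\Z,\tilde\Z)$ and $|Z'_t-\tilde Z'_t|\le|Z'_0-\tilde Z'_0|+T^\al d_{\X,\tilde\X;\al}(\Z,\tilde\Z)$. The Taylor-integral argument of Step 2.1 goes through unchanged once we use the two-path bound on $\|Y-\tilde Y\|_\al$.

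\textbf{The term $R^0$, which is the main obstacle.} The integrals $\int F(Y)\,d\Z$ and $\int F(\tilde Y)\,d\tilde\Z$ are built over different rough paths, so we cannot apply Corollary~\ref{coro:esti} to a single integrand as in Step 1 of Proposition~\ref{prop:3}. We therefore prove a difference version of \eqref{eq:21} by redoing the point-removal argument of Theorem~\ref{thm:1} on the difference of the germs
\[
\Xi_{u,v}=F(Y_u)Z_{u,v}+DF(Y_u)Y'_uZ'_u\x_{u,v}
\]
and $\tilde\Xi_{u,v}$. By \eqref{eq:20+1}, the quantity $\delta\Xi_{u,m,v}-\delta\tilde\Xi_{u,m,v}$ is a sum of products of the form (remainder or increment or $X$) times (remainder or increment or $X$). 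Each such difference of products is telescoped into terms containing exactly one difference factor, for instance $R^{0,F(\Y)}-R^{0,F(\tilde\Y)}$, $R^{0,\Z}-R^{0,\tilde\Z}$, $X-\tilde X$, $\x-\tilde\x$, or differences of derivatives. This yields
\[
|\delta(\Xi-\tilde\Xi)_{u,m,v}|\le M\cdot\Delta\cdot|v-u|^{3\al},
\]
where $\Delta$ is the bracket in the statement, without the factor $T^\al$ on the $\Y$ term. The summation with $\sum_n n^{-3\al}<\infty$ then gives
\[
\Bigl|\int_s^t F(Y)\,d\Z-\int_s^t F(\tilde Y)\,d\tilde\Z-(\Xi_{s,t}-\tilde\Xi_{s,t})\Bigr|\le C_{\al,T}M\Delta|t-s|^{3\al}.
\]
Dividing by $|t-s|^{2\al}$ produces the factor $T^\al$.

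It remains to handle the germ part:
\[
\Xi_{s,t}-F(Y_s)Z'_sX_{s,t}=F(Y_s)R^{0,\Z}_{s,t}+DF(Y_s)Y'_sZ'_s\x_{s,t}.
\]
The difference with its tilde counterpart is handled by telescoping, using the pointwise bounds above together with $\|\x-\tilde\x\|_{2\al}$ and $\|R^{0,\Z}-R^{0,\tilde\Z}\|_{2\al}$. These contributions carry no factor $T^\al$, but they involve only initial data, $\Z$-differences and $\X$-differences, except through $|Y_s-\tilde Y_s|$ and $|Y'_s-\tilde Y'_s|$, whose $d(\Y,\tilde\Y)$ part already carries $T^\al$. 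This matches the form of the claimed estimate.

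The delicate point is the bookkeeping. Every occurrence of $d_{\X,\tilde\X;\al}(\Y,\tilde\Y)$ must come with a factor $T^\al$, which is what the later contraction argument needs. This holds because $\Y$ enters only through the integrand's sup-norms and through the $3\al$ remainder.
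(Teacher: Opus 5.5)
Your proposal is correct and follows essentially the same route as the paper. Both arguments run a point-removal on the difference of the two compensated germs via the six-term expansion \eqref{eq:20+1}, telescoping each product into single-difference factors to get a difference version of \eqref{eq:15} (the paper's \eqref{eq:59}), and then bound the germ part and the $R^{1}$ part as in Proposition~\ref{prop:3}. Your explicit appeal to a two-rough-path composition estimate for $d_{\X,\tilde\X;\al}(F(\Y),F(\tilde\Y))$ spells out a step the paper covers only by ``applying the method in the proof of Proposition~\ref{prop:3}''; this step is needed because Lemma~\ref{lem:4}(2) is stated only for a single reference rough path.
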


\begin{proof}
We use the methods in Theorem~\ref{thm:1} and Proposition~\ref{prop:3} to prove this conclusion, and apply the point removal method to the rough integral $\int_s^t Y_r\,d\Z_r-\int_s^t \tilde Y_r\,d\tilde \Z_r$. By applying~(\ref{eq:20+1}),
\begin{align}
&\ \Big(\int_PY_r\,d\Z_r-\int_P\tilde Y_r\,d\tilde \Z_r\Big)-\Big(\int_{P\setminus \{t_j\}}Y_r\,d\Z_r-\int_{P\setminus \{t_j\}}\tilde Y_r\,d\tilde \Z_r\Big)\nonumber\\
=&\ \Big(R^{0, \Y}_{t_{j-1}, t_{j}}R^{0, \Z}_{t_{j}, t_{j+1}}-R^{0, \tilde\Y}_{t_{j-1}, t_{j}}R^{0, \tilde\Z}_{t_{j}, t_{j+1}}\Big)+\Big(R^{0, \Y}_{t_{j-1}, t_{j}}Z'_{t_{j}}X_{t_{j}, t_{j+1}}-R^{0, \tilde \Y}_{t_{j-1}, t_{j}}\tilde Z'_{t_{j}}\tilde X_{t_{j}, t_{j+1}}\Big)\nonumber\\
&\ +\Big(Y'_{t_{j-1}}X_{t_{j-1}, t_{j}}R^{0, \Z}_{t_{j}, t_{j+1}}-\tilde Y'_{t_{j-1}}\tilde X_{t_{j-1}, t_{j}}R^{0, \tilde \Z}_{t_{j}, t_{j+1}}\Big)+\Big(Y'_{t_{j-1}}Z'_{t_{j-1}, t_{j}}X_{t_{j-1}, t_{j}}\otimes X_{t_{j}, t_{j+1}}-\tilde Y'_{t_{j-1}}\tilde Z'_{t_{j-1}, t_{j}}\tilde X_{t_{j-1}, t_{j}}\otimes \tilde X_{t_{j}, t_{j+1}}\Big)\nonumber\\
&\ +\Big(Y'_{t_{j-1}, t_{j}}Z'_{t_{j}}\x_{t_{j}, t_{j+1}}-\tilde Y'_{t_{j-1}, t_{j}}\tilde Z'_{t_{j}}\tilde \x_{t_{j}, t_{j+1}}\Big)+\Big(Y'_{t_{j-1}}Z'_{t_{j-1}, t_{j}}\x_{t_{j}, t_{j+1}}-\tilde Y'_{t_{j-1}}\tilde Z'_{t_{j-1}, t_{j}}\tilde \x_{t_{j}, t_{j+1}}\Big).\mlabel{eq:51}
\end{align}
We estimate the six terms separately on the right-hand side of the equation.

\noindent{\bf Step 1. } For the first term,
\begin{align}
&\ \big|R^{0, \Y}_{t_{j-1}, t_{j}}R^{0, \Z}_{t_{j}, t_{j+1}}-R^{0, \tilde\Y}_{t_{j-1}, t_{j}}R^{0, \tilde\Z}_{t_{j}, t_{j+1}}\big|\nonumber\\
\le&\  \big|R^{0, \Y}_{t_{j-1}, t_{j}}-R^{0, \tilde\Y}_{t_{j-1}, t_{j}}\big|\,\big|R^{0, \Z}_{t_{j}, t_{j+1}}\big|+\big|R^{0, \tilde\Y}_{t_{j-1}, t_{j}}\big|\,\big|R^{0, \Z}_{t_{j}, t_{j+1}}-R^{0, \tilde\Z}_{t_{j}, t_{j+1}}\big| \nonumber\\
\le&\  \big\|R^{0, \Y}-R^{0, \tilde\Y}\big\|_{2\al}\big\|R^{0, \Z}\big\|_{2\al}|t_{j}-t_{j-1}|^{2\al}|t_{j+1}-t_{j}|^{2\al}+\big\|R^{0, \tilde\Y}\big\|_{2\al}\big\|R^{0, \Z}-R^{0, \tilde\Z}\big\|_{2\al}|t_{j}-t_{j-1}|^{2\al}|t_{j+1}-t_{j}|^{2\al} \nonumber\\
\overset{(\ref{eq:norm2}),(\ref{eq:norm1})}{\le}&\ d_{\X, \tilde \X; \al}(\Y, \tilde \Y)\|\Z\|_{\X;\al}|t_{j}-t_{j-1}|^{2\al}|t_{j+1}-t_{j}|^{2\al}+\|\tilde \Y\|_{\tilde \X; \al}\,d_{\X, \tilde \X; \al}(\Z, \tilde \Z)|t_{j}-t_{j-1}|^{2\al}|t_{j+1}-t_{j}|^{2\al}.  \mlabel{eq:52}
\end{align}

\noindent{\bf Step 2. } For the second term,
\begin{align}
&\ \big|R^{0, \Y}_{t_{j-1}, t_{j}}Z'_{t_{j}}X_{t_{j}, t_{j+1}}-R^{0, \tilde \Y}_{t_{j-1}, t_{j}}\tilde Z'_{t_{j}}\tilde X_{t_{j}, t_{j+1}}\big| \nonumber\\
\le&\  \big|R^{0, \Y}_{t_{j-1}, t_{j}}-R^{0, \tilde \Y}_{t_{j-1}, t_{j}}\big|\, \big|Z'_{t_{j}}\big|\,\big|X_{t_{j}, t_{j+1}}\big|+\big|R^{0, \tilde \Y}_{t_{j-1}, t_{j}}\big|\,\big|Z'_{t_{j}}X_{t_{j}, t_{j+1}}-\tilde Z'_{t_{j}}\tilde X_{t_{j}, t_{j+1}}\big| \nonumber\\
\le&\  \big|R^{0, \Y}_{t_{j-1}, t_{j}}-R^{0, \tilde \Y}_{t_{j-1}, t_{j}}\big|\, \big|Z'_{t_{j}}\big|\,\big|X_{t_{j}, t_{j+1}}\big|
+\big|R^{0, \tilde \Y}_{t_{j-1}, t_{j}}\big|\,\big|Z'_{t_{j}}-\tilde Z'_{t_{j}}\big|\,\big|X_{t_{j}, t_{j+1}}\big|
+\big|R^{0, \tilde \Y}_{t_{j-1}, t_{j}}\big|\,\big|\tilde Z'_{t_{j}}\big|\,\big|X_{t_{j}, t_{j+1}}-\tilde X_{t_{j}, t_{j+1}}\big| \nonumber\\
\le&\  \big\|R^{0, \Y}-R^{0, \tilde\Y}\big\|_{2\al}\|Z'\|_{\infty}\|X\|_{\al}|t_{j}-t_{j-1}|^{2\al}|t_{j+1}-t_{j}|^{\al}+ \big\|R^{0, \tilde\Y}\big\|_{2\al}\|Z'-\tilde Z'\|_{\infty}\|X\|_{\al}|t_{j}-t_{j-1}|^{2\al}|t_{j+1}-t_{j}|^{\al} \nonumber\\
&\ +\big\|R^{0, \tilde\Y}\big\|_{2\al}\|\tilde Z'\|_{\infty}\|X-\tilde X\|_{\al}|t_{j}-t_{j-1}|^{2\al}|t_{j+1}-t_{j}|^{\al}   \nonumber\\
\overset{(\ref{eq:6})}{\le}&\  \big\|R^{0, \Y}-R^{0, \tilde\Y}\big\|_{2\al}\Big(T^\al\|Z'\|_{\al}+|Z'_0|\Big)\|X\|_{\al}|t_{j}-t_{j-1}|^{2\al}|t_{j+1}-t_{j}|^{\al}\nonumber\\
&\ + \big\|R^{0, \tilde\Y}\big\|_{2\al}\Big(T^\al\|Z'-\tilde Z'\|_{\al}+|Z'_0-\tilde Z'_0|\Big)\|X\|_{\al}|t_{j}-t_{j-1}|^{2\al}|t_{j+1}-t_{j}|^{\al} \nonumber\\
&\ +\big\|R^{0, \tilde\Y}\big\|_{2\al}\Big(T^\al\|\tilde Z'\|_{\al}+|\tilde Z'_0|\Big)\|X-\tilde X\|_{\al}|t_{j}-t_{j-1}|^{2\al}|t_{j+1}-t_{j}|^{\al}    \nonumber\\
\leq &\  d_{\X, \tilde \X; \al}(\Y, \tilde \Y)\Big(T^\al\|\Z\|_{\X; \al}+|Z'_0|\Big)\|\X\|_{\al}|t_{j}-t_{j-1}|^{2\al}|t_{j+1}-t_{j}|^{\al}\nonumber\\
&\ + \|\tilde\Y\|_{\tilde\X; \al}\Big(T^{\al}d_{\X, \tilde \X; \al}(\Z, \tilde \Z)+|Z'_0-\tilde Z'_0|\Big)\|\X\|_{\al}|t_{j}-t_{j-1}|^{2\al}|t_{j+1}-t_{j}|^{\al} \nonumber\\
&\ +\|\tilde\Y\|_{\tilde\X; \al}\Big(T^\al\|\tilde\Z\|_{\tilde\X; \al}+|\tilde Z'_0|\Big)\|\X-\tilde \X\|_{\al}|t_{j}-t_{j-1}|^{2\al}|t_{j+1}-t_{j}|^{\al} \hspace{1cm} (\text{by~(\ref{eq:note4}),~(\ref{eq:norm2}) and~(\ref{eq:norm1})}). \mlabel{eq:53}
\end{align}

\noindent{\bf Step 3. } For the third term,
\begin{align}
&\  \big|Y'_{t_{j-1}}X_{t_{j-1}, t_{j}}R^{0, \Z}_{t_{j}, t_{j+1}}-\tilde Y'_{t_{j-1}}\tilde X_{t_{j-1}, t_{j}}R^{0, \tilde \Z}_{t_{j}, t_{j+1}} \big|  \nonumber\\
\le&\ |Y'_{t_{j-1}}-\tilde Y'_{t_{j-1}}|\,|X_{t_{j-1}, t_{j}}|\,\big|R^{0, \Z}_{t_{j}, t_{j+1}} \big|+|\tilde Y'_{t_{j-1}}|\,\big|X_{t_{j-1}, t_{j}}R^{0, \Z}_{t_{j}, t_{j+1}}-\tilde X_{t_{j-1}, t_{j}}R^{0, \tilde \Z}_{t_{j}, t_{j+1}}\big| \nonumber\\
\le&\ |Y'_{t_{j-1}}-\tilde Y'_{t_{j-1}}|\,|X_{t_{j-1}, t_{j}}|\,\big|R^{0, \Z}_{t_{j}, t_{j+1}} \big|
+|\tilde Y'_{t_{j-1}}|\,|X_{t_{j-1}, t_{j}}-\tilde X_{t_{j-1}, t_{j}}|\,\big|R^{0, \Z}_{t_{j}, t_{j+1}}|
+|\tilde Y'_{t_{j-1}}|\,|\tilde X_{t_{j-1}, t_{j}}|\,\big|R^{0, \Z}_{t_{j}, t_{j+1}}-R^{0, \tilde \Z}_{t_{j}, t_{j+1}}\big| \nonumber\\
\le&\ \|Y'-\tilde Y'\|_{\infty}\|X\|_{\al} \big\|R^{0, \Z}\big\|_{2\al}|t_{j}-t_{j-1}|^{\al}|t_{j+1}-t_{j}|^{2\al}+\|\tilde Y'\|_{\infty}\|X-\tilde X\|_{\al}\big\|R^{0, \Z}\big\|_{2\al}|t_{j}-t_{j-1}|^{\al}|t_{j+1}-t_{j}|^{2\al}\nonumber\\
&\ +\|\tilde Y'\|_{\infty}\|\tilde X\|_{\al}\big\|R^{0, \Z}-R^{0, \tilde \Z}\big\|_{2\al}|t_{j}-t_{j-1}|^{\al}|t_{j+1}-t_{j}|^{2\al} \nonumber\\
\overset{(\ref{eq:6})}{\le}&\ \Big(T^\al\|Y'-\tilde Y'\|_{\al}+|Y'_0-\tilde Y'_0|\Big)\|X\|_{\al} \big\|R^{0, \Z}\big\|_{2\al}|t_{j}-t_{j-1}|^{\al}|t_{j+1}-t_{j}|^{2\al} \nonumber\\
&\ +\Big(T^\al\|\tilde Y'\|_{\al}+|\tilde Y'_0|\Big)\|X-\tilde X\|_{\al}\big\|R^{0, \Z}\big\|_{2\al}|t_{j}-t_{j-1}|^{\al}|t_{j+1}-t_{j}|^{2\al} \nonumber\\
&\ +\Big(T^\al\|\tilde Y'\|_{\al}+|\tilde Y'_0|\Big)\|\tilde X\|_{\al}\big\|R^{0, \Z}-R^{0, \tilde \Z}\big\|_{2\al}|t_{j}-t_{j-1}|^{\al}|t_{j+1}-t_{j}|^{2\al}  \nonumber\\
\le&\ \Big(T^\al d_{\X, \tilde \X; \al}(\Y, \tilde \Y)+|Y'_0-\tilde Y'_0|\Big)\|\X\|_{\al} \|\Z\|_{\X; \al}|t_{j}-t_{j-1}|^{\al}|t_{j+1}-t_{j}|^{2\al} \nonumber\\
&\ +\Big(T^\al\|\tilde \Y\|_{\tilde\X; \al}+|\tilde Y'_0|\Big)\|\X-\tilde \X\|_{\al}\|\Z\|_{\X; \al}|t_{j}-t_{j-1}|^{\al}|t_{j+1}-t_{j}|^{2\al} \nonumber\\
&\ +\Big(T^\al\|\tilde \Y\|_{\tilde\X; \al}+|\tilde Y'_0|\Big)\|\tilde \X\|_{\al}d_{\X, \tilde \X; \al}(\Z, \tilde \Z)|t_{j}-t_{j-1}|^{\al}|t_{j+1}-t_{j}|^{2\al}. \mlabel{eq:54}
\end{align}

\noindent{\bf Step 4. } For the fourth term,
\begin{align}
&\ |Y'_{t_{j-1}}Z'_{t_{j-1}, t_{j}}X_{t_{j-1}, t_{j}}\otimes X_{t_{j}, t_{j+1}}-\tilde Y'_{t_{j-1}}\tilde Z'_{t_{j-1}, t_{j}}\tilde X_{t_{j-1}, t_{j}}\otimes \tilde X_{t_{j}, t_{j+1}}|\nonumber\\
\le&\ |Y'_{t_{j-1}}-\tilde Y'_{t_{j-1}}|\,|Z'_{t_{j-1}, t_{j}}|\,|X_{t_{j-1}, t_{j}}|\,|X_{t_{j}, t_{j+1}}|+|\tilde Y'_{t_{j-1}}|\,|Z'_{t_{j-1}, t_{j}}-\tilde Z'_{t_{j-1}, t_{j}}|\,|X_{t_{j-1}, t_{j}}|\,|X_{t_{j}, t_{j+1}}| \nonumber\\
&\ +|\tilde Y'_{t_{j-1}}|\,|\tilde Z'_{t_{j-1}, t_{j}}|\,|X_{t_{j-1}, t_{j}}-\tilde X_{t_{j-1}, t_{j}}|\,|X_{t_{j}, t_{j+1}}|+|\tilde Y'_{t_{j-1}}|\,|\tilde Z'_{t_{j-1}, t_{j}}|\,|\tilde X_{t_{j-1}, t_{j}}|\,|X_{t_{j}, t_{j+1}}-\tilde X_{t_{j}, t_{j+1}}| \nonumber\\
\le&\ \|Y'-\tilde Y'\|_{\infty}\|Z'\|_{\al}\|X\|_{\al}^2|t_{j}-t_{j-1}|^{2\al}|t_{j+1}-t_{j}|^{\al}+\|\tilde Y'\|_{\infty}\|Z'-\tilde Z'\|_{\al}\|X\|_{\al}^2|t_{j}-t_{j-1}|^{2\al}|t_{j+1}-t_{j}|^{\al} \nonumber\\
&\ +\|\tilde Y'\|_{\infty}\|\tilde Z'\|_{\al}\|X-\tilde X\|_{\al}\|X\|_{\al}|t_{j}-t_{j-1}|^{2\al}|t_{j+1}-t_{j}|^{\al}+\|\tilde Y'\|_{\infty}\|\tilde Z'\|_{\al}\|\tilde X\|_{\al}\|X-\tilde X\|_{\al}|t_{j}-t_{j-1}|^{2\al}|t_{j+1}-t_{j}|^{\al} \nonumber\\
\overset{(\ref{eq:6})}{\le}&\ \Big(T^\al\|Y'-\tilde Y'\|_{\al}+|Y'_0-\tilde Y'_0|\Big)\|Z'\|_{\al}\|X\|_{\al}^2|t_{j}-t_{j-1}|^{2\al}|t_{j+1}-t_{j}|^{\al} \nonumber\\
&\ +\Big(T^\al\|\tilde Y'\|_{\al}+|\tilde Y'_0|\Big)\|Z'-\tilde Z'\|_{\al}\|X\|_{\al}^2|t_{j}-t_{j-1}|^{2\al}|t_{j+1}-t_{j}|^{\al} \nonumber\\
&\ +\Big(T^\al\|\tilde Y'\|_{\al}+|\tilde Y'_0|\Big)\|\tilde Z'\|_{\al}\|X-\tilde X\|_{\al}\|X\|_{\al}|t_{j}-t_{j-1}|^{2\al}|t_{j+1}-t_{j}|^{\al} \nonumber\\
&\ +\Big(T^\al\|\tilde Y'\|_{\al}+|\tilde Y'_0|\Big)\|\tilde Z'\|_{\al}\|\tilde X\|_{\al}\|X-\tilde X\|_{\al}|t_{j}-t_{j-1}|^{2\al}|t_{j+1}-t_{j}|^{\al}  \nonumber\\
\le&\ \Big(T^\al d_{\X, \tilde \X; \al}(\Y, \tilde \Y)+|Y'_0-\tilde Y'_0|\Big)\|\Z\|_{\X; \al}\|\X\|_{\al}^2|t_{j}-t_{j-1}|^{2\al}|t_{j+1}-t_{j}|^{\al} \nonumber\\
&\ +\Big(T^\al\|\tilde \Y\|_{\tilde\X; \al}+|\tilde Y'_0|\Big)d_{\X, \tilde \X; \al}(\Z, \tilde \Z)\|\X\|_{\al}^2|t_{j}-t_{j-1}|^{2\al}|t_{j+1}-t_{j}|^{\al} \nonumber\\
&\ +\Big(T^\al\|\tilde \Y\|_{\tilde\X; \al}+|\tilde Y'_0|\Big)\|\tilde \Z\|_{\tilde\X; \al}\|\X-\tilde \X\|_{\al}\|X\|_{\al}|t_{j}-t_{j-1}|^{2\al}|t_{j+1}-t_{j}|^{\al} \nonumber\\
&\ +\Big(T^\al\|\tilde \Y\|_{\tilde\X; \al}+|\tilde Y'_0|\Big)\|\tilde \Z\|_{\tilde\X; \al}\|\tilde \X\|_{\al}\|\X-\tilde \X\|_{\al}|t_{j}-t_{j-1}|^{2\al}|t_{j+1}-t_{j}|^{\al}. \mlabel{eq:55}
\end{align}

\noindent{\bf Step 5. } For the fifth term,
\begin{align}
&\ |Y'_{t_{j-1}, t_{j}}Z'_{t_{j}}\x_{t_{j}, t_{j+1}}-\tilde Y'_{t_{j-1}, t_{j}}\tilde Z'_{t_{j}}\tilde \x_{t_{j}, t_{j+1}}| \nonumber\\
\le&\ |Y'_{t_{j-1}, t_{j}}-\tilde Y'_{t_{j-1}, t_{j}}|\,|Z'_{t_{j}}|\,|\x_{t_{j}, t_{j+1}}|+|\tilde Y'_{t_{j-1}, t_{j}}|\,|Z'_{t_{j}}-\tilde Z'_{t_{j}}|\,|\x_{t_{j}, t_{j+1}}|+|\tilde Y'_{t_{j-1}, t_{j}}|\,|\tilde Z'_{t_{j}}|\,|\x_{t_{j}, t_{j+1}}-\tilde \x_{t_{j}, t_{j+1}}| \nonumber\\
\le&\ \|Y'-\tilde Y'\|_{\al}\|Z'\|_{\infty}\|\x\|_{2\al}|t_{j}-t_{j-1}|^{\al}|t_{j+1}-t_{j}|^{2\al}+\|\tilde Y'\|_{\al}\|Z'-\tilde Z'\|_{\infty}\|\x\|_{2\al}|t_{j}-t_{j-1}|^{\al}|t_{j+1}-t_{j}|^{2\al} \nonumber\\
&\ +\|\tilde Y'\|_{\al}\|\tilde Z'\|_{\infty}\|\x-\tilde \x\|_{2\al}|t_{j}-t_{j-1}|^{\al}|t_{j+1}-t_{j}|^{2\al}  \nonumber\\
\le&\ d_{\X, \tilde \X; \al}(\Y, \tilde \Y)\Big(T^\al\|\Z\|_{\X; \al}+| Z'_0|\Big)\|\X\|_{\al}|t_{j}-t_{j-1}|^{\al}|t_{j+1}-t_{j}|^{2\al}\nonumber\\
&\ +\|\tilde \Y\|_{\tilde\X; \al}\Big(T^\al d_{\X, \tilde \X; \al}(\Z, \tilde \Z)+|Z'_0-\tilde Z'_0|\Big)\|\X\|_{\al}|t_{j}-t_{j-1}|^{\al}|t_{j+1}-t_{j}|^{2\al} \nonumber\\
&\ +\|\tilde \Y\|_{\tilde\X; \al}\Big(T^\al\|\tilde \Z\|_{\tilde\X; \al}+|\tilde Z'_0|\Big)\|\X-\tilde \X\|_{\al}|t_{j}-t_{j-1}|^{\al}|t_{j+1}-t_{j}|^{2\al}\mlabel{eq:56}.
\end{align}

\noindent{\bf Step 6. } For the sixth term,
\begin{align}
&\ |Y'_{t_{j-1}}Z'_{t_{j-1}, t_{j}}\x_{t_{j}, t_{j+1}}-\tilde Y'_{t_{j-1}}\tilde Z'_{t_{j-1}, t_{j}}\tilde \x_{t_{j}, t_{j+1}}| \nonumber\\
\le&\ |Y'_{t_{j-1}}-\tilde Y'_{t_{j-1}}|\,|Z'_{t_{j-1}, t_{j}}|\,|\x_{t_{j}, t_{j+1}}|+|\tilde Y'_{t_{j-1}}|\,|Z'_{t_{j-1}, t_{j}}-\tilde Z'_{t_{j-1}, t_{j}}|\,|\x_{t_{j}, t_{j+1}}|+|\tilde Y'_{t_{j-1}}|\,|\tilde Z'_{t_{j-1}, t_{j}}|\,|\x_{t_{j}, t_{j+1}}-\tilde \x_{t_{j}, t_{j+1}}| \nonumber\\
\le&\ \|Y'-\tilde Y'\|_{\infty}\|Z'\|_{\al}\|\x\|_{2\al}|t_{j}-t_{j-1}|^{\al}|t_{j+1}-t_{j}|^{2\al}+\|\tilde Y'\|_{\infty}\|Z'-\tilde Z'\|_{\al}\|\x\|_{2\al}|t_{j}-t_{j-1}|^{\al}|t_{j+1}-t_{j}|^{2\al} \nonumber\\
&\ +\|\tilde Y'\|_{\infty}\|\tilde Z'\|_{\al}\|\x-\tilde \x\|_{2\al}|t_{j}-t_{j-1}|^{\al}|t_{j+1}-t_{j}|^{2\al}\nonumber\\
\le&\ \Big(T^\al d_{\X, \tilde \X; \al}(\Y, \tilde \Y)+|Y'_0-\tilde Y'_0|\Big)\|\Z\|_{\X; \al}\|\X\|_{\al}|t_{j}-t_{j-1}|^{\al}|t_{j+1}-t_{j}|^{2\al} \nonumber\\
&\ +\Big(T^\al\|\tilde \Y\|_{\tilde\X; \al}+|\tilde Y'_0|\Big)d_{\X, \tilde \X; \al}(\Z, \tilde \Z)\|\X\|_{\al}|t_{j}-t_{j-1}|^{\al}|t_{j+1}-t_{j}|^{2\al} \nonumber\\
&\ +\Big(T^\al\|\tilde \Y\|_{\tilde\X; \al}+|\tilde Y'_0|\Big)\|\tilde \Z\|_{\tilde\X; \al}\|\X-\tilde \X\|_{\al}|t_{j}-t_{j-1}|^{\al}|t_{j+1}-t_{j}|^{2\al}.  \mlabel{eq:57}
\end{align}

Substituting~(\ref{eq:52})--(\ref{eq:57}) into~(\ref{eq:51}), we conclude
\begin{align*}
&\ \Big|\Big(\int_PY_r\,d\Z_r-\int_P\tilde Y_r\,d\tilde \Z_r\Big)-\Big(\int_{P\setminus \{t_j\}}Y_r\,d\Z_r-\int_{P\setminus \{t_j\}}\tilde Y_r\,d\tilde \Z_r\Big)\Big|\nonumber\\
\le&\ \Big(1+T^\al \Big) \Big(1+|Y_0'|+\|\Y\|_{\X; \al} \Big) \Big(1+|\tilde Y_0'|+\|\tilde \Y\|_{\tilde\X; \al} \Big)\Big(1+|Z_0'|+\|\Z\|_{\X; \al} \Big)\Big(1+|\tilde Z_0'|+\|\tilde \Z\|_{\tilde\X; \al} \Big) \nonumber\\
&\ \times\Big(1+\|\X\|_{\al}+\|\tilde\X\|_{\al} \Big)\Big(d_{\X, \tilde \X; \al}(\Y, \tilde \Y)+d_{\X, \tilde \X; \al}(\Z, \tilde \Z)+|Y_0'-\tilde Y_0'|+|Z_0'-\tilde Z_0'|+\|\X-\tilde \X\|_{\al} \Big)|t_{j+1}-t_{j-1}|^{3\al}.
\end{align*}
Now that we have derived a relation analogous to~(\ref{eq:14}), we can similarly obtain a counterpart to~(\ref{eq:15}), and so
\begin{align}
&\ \Big|\Big(\int_s^tY_r\,d\Z_r-\int_s^t\tilde Y_r\,d\tilde \Z_r\Big)-\Big((Y_sZ_{s, t}+Y'_sZ'_s\x_{s, t})-(\tilde Y_s\tilde Z_{s, t}+\tilde Y'_s\tilde Z'_s\tilde \x_{s, t})\Big)\Big|\nonumber\\
\le&\ C_{\al, T} \Big(1+|Y_0'|+\|\Y\|_{\X; \al} \Big) \Big(1+|\tilde Y_0'|+\|\tilde \Y\|_{\tilde\X; \al} \Big)\Big(1+|Z_0'|+\|\Z\|_{\X; \al} \Big)\Big(1+|\tilde Z_0'|+\|\tilde \Z\|_{\tilde\X; \al} \Big) \nonumber\\
&\ \times\Big(1+\|\X\|_{\al}+\|\tilde\X\|_{\al} \Big)\Big(d_{\X, \tilde \X; \al}(\Y, \tilde \Y)+d_{\X, \tilde \X; \al}(\Z, \tilde \Z)+|Y_0'-\tilde Y_0'|+|Z_0'-\tilde Z_0'|+\|\X-\tilde \X\|_{\al} \Big)|t-s|^{3\al}.\mlabel{eq:59}
\end{align}
Finally, by applying the method in the proof of Proposition~\ref{prop:3} and combining with~(\ref{eq:59}), we obtain the desired result.
\end{proof}

\begin{lemma}\label{lem:patching}
Let \(0=t_0<\cdots<t_N=T\) be a finite partition. Suppose that
\(\Y,\widetilde{\Y}\) are controlled by \(\X,\widetilde{\X}\), respectively,
and that on each subinterval \([t_i,t_{i+1}]\)
\[
d_{\X,\widetilde{\X};\alpha,[t_i,t_{i+1}]}
(\Y,\widetilde{\Y})\leq \varepsilon_i.
\]
If the endpoint errors
\[
|Y_{t_i}-\widetilde Y_{t_i}|+|Y'_{t_i}-\widetilde Y'_{t_i}|
\]
are bounded by \(\varepsilon\), then
\[
d_{\X,\widetilde{\X};\alpha,[0,T]}(\Y,\widetilde{\Y})
\leq
C_T
\Bigg(
\sum_{i=0}^{N-1}\varepsilon_i+\varepsilon
+\|\X-\widetilde{\X}\|_{\alpha,[0,T]}
\Bigg),
\]
where \(C_T\) depends only on \(T,\alpha,N\) and the a priori controlled norms.
\end{lemma}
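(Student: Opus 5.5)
The plan is to reduce the global Hölder-type seminorms in $d_{\X,\widetilde\X;\alpha,[0,T]}(\Y,\widetilde\Y)=\|R^{0,\Y}-R^{0,\widetilde\Y}\|_{2\alpha}+\|R^{1,\Y}-R^{1,\widetilde\Y}\|_{\alpha}$ to the local ones by a standard chaining argument over the partition. Fix $s<t$ in $[0,T]$. If $s,t$ lie in a common subinterval $[t_i,t_{i+1}]$, the bound $\varepsilon_i$ applies directly. Otherwise let $t_k<\dots<t_m$ be the partition points strictly between $s$ and $t$ and write $s=u_0<u_1=t_k<\dots<u_{n}=t$, where $n\le N+1$.

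For the first level ($R^{1}=Y'_{s,t}$), telescoping gives
\[
(Y'-\widetilde Y')_{s,t}=\sum_{l}(Y'-\widetilde Y')_{u_l,u_{l+1}},
\]
and each summand is at most $\varepsilon_{i(l)}|u_{l+1}-u_l|^{\alpha}\le\varepsilon_{i(l)}|t-s|^\alpha$. Hence $\|R^{1,\Y}-R^{1,\widetilde\Y}\|_\alpha\le\sum_i\varepsilon_i$.

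For the second level, I will use the algebraic identity valid for controlled paths,
\[
R^{0,\Y}_{s,t}=\sum_{l}R^{0,\Y}_{u_l,u_{l+1}}+\sum_{l}Y'_{s,u_l}X_{u_l,u_{l+1}},
\]
which follows from $Y_{s,t}=\sum Y_{u_l,u_{l+1}}$ and $Y'_{u_l}=Y'_s+Y'_{s,u_l}$. The same identity holds for $\widetilde\Y$ with $\widetilde X$. Subtracting, the first sums are bounded by $\sum_i\varepsilon_i|t-s|^{2\alpha}$. The cross terms are split as
\[
(Y'-\widetilde Y')_{s,u_l}X_{u_l,u_{l+1}}+\widetilde Y'_{s,u_l}(X-\widetilde X)_{u_l,u_{l+1}}.
\]
The first piece is controlled by the already-obtained bound $\sum_i\varepsilon_i|t-s|^\alpha$ on $Y'-\widetilde Y'$, times $\|X\|_\alpha|t-s|^\alpha$. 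The second piece is at most $\|\widetilde Y'\|_\alpha\|X-\widetilde X\|_\alpha|t-s|^{2\alpha}$. Summing at most $N+1$ terms gives the claim with $C_T$ depending on $N$, $\|\X\|_\alpha$, and $\|\widetilde Y'\|_\alpha$ (an a priori controlled norm).

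The main point to check, and the place I expect friction, is the role of the endpoint errors $\varepsilon$. In the chaining above they do not actually appear, since $d$ only involves remainders. Still, I will add $\varepsilon$ harmlessly, because in the application the local bounds $\varepsilon_i$ from Proposition~\ref{prop:4} themselves depend on $|Y_{t_i}-\widetilde Y_{t_i}|+|Y'_{t_i}-\widetilde Y'_{t_i}|$. The resulting estimate is stated as an upper bound, so including $\varepsilon$ costs nothing. I will also remark that the dependence on $N$ (rather than only on $T$) is unavoidable in this naive chaining. This is acceptable since $N$ is fixed by the uniform local time step from Theorem~\ref{thm:3}.
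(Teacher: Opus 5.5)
Your proof is correct and follows the same chaining argument the paper sketches: decompose $[s,t]$ along the partition points, use the controlled expansion on each piece, and control the cross terms with $\|\X-\widetilde{\X}\|_{\alpha}$ and the a priori norm of $\widetilde Y'$. Your observation about $\varepsilon$ is also right, and it sharpens the paper's sketch, which attributes the cross terms to the endpoint errors: the cross terms are increments $(Y'-\widetilde Y')_{s,u_l}X_{u_l,u_{l+1}}$, so they are controlled by the chained bound $\|R^{1,\Y}-R^{1,\widetilde{\Y}}\|_{\alpha}\le\sum_i\varepsilon_i$, and $\varepsilon$ can be added harmlessly.
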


\begin{proof}
This follows by decomposing every interval \([s,t]\subset[0,T]\) along the partition points and using the controlled expansions on each subinterval. The cross terms are controlled by the endpoint errors and by $\|\X-\widetilde{\X}\|_{\alpha,[0,T]}$. Since the partition is finite, the local estimates patch together with a constant depending only on the partition and the a priori bounds.
\end{proof}

With the local stability estimate and the patching lemma in hand, we can now turn to the proof of the universal limit theorem.

\begin{proof}[Proof of Theorem~\ref{thm:4}]
Set
\[
\mathcal E
:=
|Y_0-\widetilde Y_0|
+
|Y'_0-\widetilde Y'_0|
+
|Z_0-\widetilde Z_0|
+
|Z'_0-\widetilde Z'_0|
+
d_{\X,\widetilde{\X};\alpha}(\Z,\widetilde{\Z})
+
\|\X-\widetilde{\X}\|_\alpha .
\]
We first prove the desired estimate on a sufficiently small interval. By Remark~\ref{rem:add1} we have \(\Y=\J\) and
\(\widetilde{\Y}=\widetilde{\J}\). Applying Proposition~\ref{prop:4} on an interval \([a,b]\subset[0,T]\), with \(b-a\leq \tau\), gives
\begin{align*}
d_{\X,\widetilde{\X};\alpha,[a,b]}(\Y,\widetilde{\Y})
&\leq
C_{\al, T}
M_0
\Big(
(b-a)^\alpha
d_{\X,\widetilde{\X};\alpha,[a,b]}(\Y,\widetilde{\Y})
+
|Y_a-\widetilde Y_a|
+
|Y'_a-\widetilde Y'_a|
+
|Z_a-\widetilde Z_a|
+
|Z'_a-\widetilde Z'_a|  \\
&\qquad
+
d_{\X,\widetilde{\X};\alpha,[a,b]}(\Z,\widetilde{\Z})
+
\|\X-\widetilde{\X}\|_{\alpha,[a,b]}
\Big),
\end{align*}
where \(M_0\) depends only on the a priori quantities appearing in the
statement of the theorem, namely
\[
T,\ \|F\|_{C_b^3},\ |Y'_0|,\ |\widetilde Y'_0|,\ |Z'_0|,\ |\widetilde Z'_0|,
\ \|\Y\|_{\X;\alpha},\ \|\widetilde{\Y}\|_{\widetilde{\X};\alpha},
\ \|\Z\|_{\X;\alpha},\ \|\widetilde{\Z}\|_{\widetilde{\X};\alpha},
\ \|\X\|_\alpha,\ \|\widetilde{\X}\|_\alpha .
\]
Choose \(\tau\in(0,T]\) sufficiently small such that
\[
C_{\al, T} M_0 \tau^\alpha \leq \frac12 .
\]
Then, for every interval \([a,b]\) with \(b-a\leq \tau\), the preceding
estimate implies
\[
\begin{aligned}
d_{\X,\widetilde{\X};\alpha,[a,b]}(\Y,\widetilde{\Y})
&\leq
2C_{\al, T} M_0
\Big(
|Y_a-\widetilde Y_a|
+
|Y'_a-\widetilde Y'_a|  \\
&\qquad
+
|Z_a-\widetilde Z_a|
+
|Z'_a-\widetilde Z'_a|
+
d_{\X,\widetilde{\X};\alpha,[a,b]}(\Z,\widetilde{\Z})
+
\|\X-\widetilde{\X}\|_{\alpha,[a,b]}
\Big).
\end{aligned}
\tag{4.9}
\label{eq:local-ult-estimate}
\]

We now control the endpoint errors. First, the controlled expansions of
\(\Z\) and \(\widetilde{\Z}\) imply that, for every \(a\in[0,T]\),
\[
|Z_a-\widetilde Z_a|
+
|Z'_a-\widetilde Z'_a|
\leq
C_T
\Big(
|Z_0-\widetilde Z_0|
+
|Z'_0-\widetilde Z'_0|
+
d_{\X,\widetilde{\X};\alpha}(\Z,\widetilde{\Z})
+
\|\X-\widetilde{\X}\|_\alpha
\Big).
\tag{4.10}
\label{eq:driver-endpoint-control}
\]
Indeed, this follows by writing
\[
Z_{0,a}=Z'_0X_{0,a}+R^{0,\Z}_{0,a},
\qquad
\widetilde Z_{0,a}
=
\widetilde Z'_0\widetilde X_{0,a}
+
R^{0,\widetilde{\Z}}_{0,a},
\]
and similarly
\[
Z'_a-Z'_0=R^{1,\Z}_{0,a},
\qquad
\widetilde Z'_a-\widetilde Z'_0=R^{1,\widetilde{\Z}}_{0,a}.
\]

Let \(0=t_0<t_1<\cdots<t_N=T\) be a partition such that
\(t_{i+1}-t_i\leq\tau\). We claim that
\[
\max_{0\leq i\leq N}
\Big(
|Y_{t_i}-\widetilde Y_{t_i}|
+
|Y'_{t_i}-\widetilde Y'_{t_i}|
\Big)
\leq
C_T\mathcal E .
\tag{4.11}
\label{eq:endpoint-control}
\]
For \(i=0\), this is part of the definition of \(\mathcal E\). Suppose it holds
at time \(t_i\). Applying \eqref{eq:local-ult-estimate} on
\([t_i,t_{i+1}]\), and using \eqref{eq:driver-endpoint-control}, gives
\[
d_{\X,\widetilde{\X};\alpha,[t_i,t_{i+1}]}(\Y,\widetilde{\Y})
\leq
C_T
\left(
|Y_{t_i}-\widetilde Y_{t_i}|
+
|Y'_{t_i}-\widetilde Y'_{t_i}|
+
\mathcal E
\right).
\tag{4.12}
\label{eq:local-bound-with-endpoint}
\]
The controlled expansions for \(\Y\) and \(\widetilde{\Y}\) then imply
\[
\begin{aligned}
&|Y_{t_{i+1}}-\widetilde Y_{t_{i+1}}|
+
|Y'_{t_{i+1}}-\widetilde Y'_{t_{i+1}}|  \\
&\qquad\leq
C_T
\left(
|Y_{t_i}-\widetilde Y_{t_i}|
+
|Y'_{t_i}-\widetilde Y'_{t_i}|
+
d_{\X,\widetilde{\X};\alpha,[t_i,t_{i+1}]}(\Y,\widetilde{\Y})
+
\|\X-\widetilde{\X}\|_{\alpha,[t_i,t_{i+1}]}
\right).
\end{aligned}
\]
Combining this with \eqref{eq:local-bound-with-endpoint} and applying the
discrete Gronwall inequality over the finite partition yields
\eqref{eq:endpoint-control}. Consequently,
\[
d_{\X,\widetilde{\X};\alpha,[t_i,t_{i+1}]}(\Y,\widetilde{\Y})
\leq
C_T\mathcal E,
\qquad i=0,\ldots,N-1 .
\tag{4.13}
\label{eq:local-distance-uniform}
\]

Finally, Lemma~\ref{lem:patching} patches the local estimates
\eqref{eq:local-distance-uniform}, together with the endpoint bound
\eqref{eq:endpoint-control}, into the global controlled distance. Hence
\[
d_{\X,\widetilde{\X};\alpha}(\Y,\widetilde{\Y})
\leq
C_T\mathcal E .
\]
Absorbing \(C_T\) into the universal increasing function \(M\), we obtain
\[
\begin{aligned}
d_{\X,\widetilde{\X};\alpha}(\Y,\widetilde{\Y})
&\leq
C_{\al, T}
M\Big(
T,\|F\|_{C_b^3},|Y'_0|,|\widetilde Y'_0|,
|Z'_0|,|\widetilde Z'_0|,
\|\Y\|_{\X;\alpha},
\|\widetilde{\Y}\|_{\widetilde{\X};\alpha},
\|\Z\|_{\X;\alpha},
\|\widetilde{\Z}\|_{\widetilde{\X};\alpha},
\|\X\|_\alpha,
\|\widetilde{\X}\|_\alpha
\Big) \\
&\quad\times
\Big(
|Y_0-\widetilde Y_0|
+
|Y'_0-\widetilde Y'_0|
+
|Z_0-\widetilde Z_0|
+
|Z'_0-\widetilde Z'_0|
+
d_{\X,\widetilde{\X};\alpha}(\Z,\widetilde{\Z})
+
\|\X-\widetilde{\X}\|_\alpha
\Big).
\end{aligned}
\]
This proves the theorem.
\end{proof}

\begin{remark}
When we replace $\Z$ with a specific $\X$-controlled rough path $(X, \text{id})$, we can obtain the universal limit theorem driven by rough path~\cite{FH20, Gu04, Ly98}.
\end{remark}

\smallskip

\noindent
{\bf Acknowledgments.} This work is supported by the National Natural Science Foundation of China (12571019), the Natural Science Foundation of Gansu Province (25JRRA644) and Innovative Fundamental Research Group Project of Gansu Province (23JRRA684).

\noindent
{\bf Declaration of interests. } The authors have no conflicts of interest to disclose.

\noindent
{\bf Data availability. } Data sharing is not applicable as no new data were created or analyzed.

\end{document}